\documentclass[ a4paper,10pt]{amsart}
\usepackage[foot]{amsaddr}

\usepackage[T1]{fontenc}

\usepackage[utf8]{inputenc}

\usepackage{lmodern}

\usepackage[english]{babel}

\usepackage{amsmath}

\usepackage{amssymb}

\usepackage{mathtools}

\usepackage{mathrsfs}

\usepackage{xypic}

\usepackage{tikz-cd}

\usepackage{bookmark}

\usepackage{hyperref} 


\usepackage[alphabetic]{amsrefs}

\usepackage{enumitem}
\usepackage{graphicx}
\usepackage{tikz}
\usepackage{tikz-cd}
\usetikzlibrary{matrix,arrows,decorations.pathmorphing}
\usepackage[all,cmtip]{xy}

\setcounter{tocdepth}{1}

\newtheorem*{thm-no-num}{Theorem}
\newtheorem*{df-no-num}{Definition}
\newtheorem{thm}{Theorem} [section]
\newtheorem{maintheorem}{\bf Main Theorem}
\newtheorem{prop}[thm]{Proposition} 
\newtheorem{lm}[thm]{Lemma} 
\newtheorem{cor}[thm]{Corollary} 
\theoremstyle{remark}
\newtheorem{rmk}[thm]{Remark}

\theoremstyle{definition} 
\newtheorem {df}[thm]{Definition}

\newcommand{\bA}{\mathbb{A}}
\newcommand{\Brm}{{\rm B}}
\newcommand{\PP}{\mathbb{P}}
\newcommand{\ZZ}{\mathbb{Z}}

\newcommand{\OO}{\mathcal{O}}
\newcommand{\bQ}{\mathbb{Q}}
\newcommand{\cl}[1]{\mathcal{#1}}
\newcommand*{\sheafhom}{\mathcal{H}\kern -.5pt om}

\newcommand{\Mcal}{\cl{M}}

\newcommand{\Hcal}{\cl{H}}
\newcommand{\Lcal}{\cl{L}}
\newcommand{\Xcal}{\cl{X}}
\newcommand{\Ycal}{\cl{Y}}

\newcommand{\Gm}{\mathbb{G}_m}

\newcommand{\PGLt}{\textnormal{PGL}_2}

\newcommand{\pr}{{\rm pr}}

\renewcommand{\H}{{\rm H}}
\newcommand{\K}{{\rm K}}
\newcommand{\M}{{\rm M}}
\newcommand{\N}{{\rm N}}

\newcommand{\Het}{{\rm H}^{\bullet}}
\newcommand{\Inv}{{\rm Inv}^{\bullet}}
\newcommand{\Spec}{{\rm Spec}}

\newcommand{\del}{\partial}
\begin{document}
	\title[Brauer groups of $\Hcal_g$]{Brauer groups of moduli of hyperelliptic curves via cohomological invariants}
	\author[A. Di Lorenzo]{Andrea Di Lorenzo}
	\address{Aarhus University, Ny Munkegade 118, DK-8000 Aarhus C, Denmark}
	\email{andrea.dilorenzo@math.au.dk}
	\author[R. Pirisi]{Roberto Pirisi}
	\address{KTH Royal Institute of Technology, Lindstedtsvägen 25, 10044 Stockholm, Sweden}
	\email{pirisi@kth.se}	
	\date{\today}
\begin{abstract}
We compute the Brauer group of the moduli stack of hyperelliptic curves $\Hcal_g$ over any field of characteristic zero. In positive characteristic, we compute the part of the Brauer group whose order is prime to the characteristic of the base field.

2010 MSC classification: 14F22, 14H10
\end{abstract}
\maketitle
\section*{Introduction}

\subsection*{Brauer groups of moduli stacks} 
Brauer groups of fields have long been an object of study in number theory, dating back to work of Noether and Brauer. They were later generalized by Grothendieck to schemes and more general objects, up to the vast generality of topoi. 

While Brauer groups of schemes have seen a lot of attention in modern algebraic geometry, computations of Brauer groups of moduli stacks over non-algebraically closed fields only started appearing in recent years.

In the $2010$s, Lieblich \cite{Lie} computed the Brauer group of ${\rm B}\mu_q$ over fields where $q$ is invertible and applied it to the period-index problem. Later, Antieau and Meier \cite{AntMeiEll} computed the Brauer group of the moduli stack $\Mcal_{1,1}$ of elliptic curves over a variety of bases, including $\ZZ$, $\bQ$ and any finite field of characteristic greater than two. Moreover, in an unpublished draft \cite{Mei} Meier computes the $\ell$-torsion of the Brauer group of $\Mcal_{1,1}$ over any noetherian scheme $S$ where $\ell$ is invertible.



In 2019, Shin \cite{Shi} showed that over an algebraically closed field of characteristic $2$ the Brauer group of $\Mcal_{1,1}$ is equal to $\ZZ/2\ZZ$.

The proofs of these results are based on standard tools in \'etale and flat cohomology coupled with a very delicate analysis of various presentations of the stack of elliptic curves, their relations, the stabilizer groups at various points, etc., which seem hard to apply to more complicated stacks.

\subsection*{Main results}
Our goal is twofold: 
\begin{enumerate}
    \item \emph{introducing a new toolkit for the computation of Brauer groups of moduli stacks}, based on the second author's theory of cohomological invariants for algebraic stacks \cite{PirAlgStack}. \\
    \item \emph{computing the prime-to-char(k) part of} ${\rm Br}(\Hcal_g)$, the Brauer group of the moduli stacks of hyperelliptic curves of genus $g\geq 2$, over a field $k$ of characteristic different from $2$ (see Main Theorem \ref{mainthm} below). In particular if ${\rm char}(k)=0$ we obtain the full Brauer group.
\end{enumerate}


To simplify our statements, we introduce the following notation:

\begin{df-no-num}
Let $A$ be an abelian torsion group, and let $c$ be a prime number or zero. We denote by $^cA$ the subgroup of $A$ given by elements whose order is not divisible by $c$. In particular $^0\!A=A$. Given a positive number $\ell$, we denote by $A_{\ell}$ the $\ell$-tosion subgroup.

  
\end{df-no-num}

Without further ado, we state our main Theorem (Thm. \ref{thm:BrauerHg}):

\begin{maintheorem}\label{mainthm}
Let $k$ be a field of characteristic $c\neq 2$ and $g>1$ an integer. Let $r_g \in \lbrace 0, 1 \rbrace$ be the remainder of $g$ mod $2$. Let $\ell_g$ be equal $2^{r_g}(4g+2)$.

Then
\[^c{\rm Br}(\Hcal_g)\simeq {^c{\rm Br}(k)} \oplus {\rm H}^1_{\textnormal{Gal}}(k, {^c}(\ZZ/\ell_g\ZZ)) \oplus \ZZ/2\ZZ^{\oplus (1+r_g)} .\]

\end{maintheorem}

Note that while in Antieau and Meier's results all non-trivial classes in the Brauer group come from cyclic algebras, in our case the $\ZZ/2\ZZ^{\oplus(1+r_g)}$ component does not, painting a richer picture. The additional copy of $\ZZ/2\ZZ$ in the odd case is generated by the class of the relative Brauer--Severi scheme $P=\mathcal{C}/\iota$, where $\mathcal{C}$ is the universal curve over $\Hcal_g$ and $\iota$ is the hyperelliptic involution. It is well known that this class is trivial when $g$ is even.

As we mentioned earlier, our techniques are based on the second author's theory of cohomological invariants for algebraic stacks \cite{PirAlgStack} and computations in Rost's (equivariant) Chow groups with coefficients \cites{Rost, Guil}. We believe these techniques will be well suited to compute the Brauer groups of a large variety of stacks admitting a ``good'' quotient presentation, such as the stacks of trigonal curves \cite{BolVis} and stacks that can be obtained as stacks of complete intersections, e.g. the stacks of quasi-polarized $K3$ surfaces of low degree \cite{DilK3}.

In terms of computing cohomological invariants, our main result (combining Prop. \ref{prop:CohInvHgodd}, Thm. \ref{thm:CohInvHg2} and Rmk. \ref{rmk:last inv}) is the following:

\begin{maintheorem}\label{main:Invariants}
Let $k$ be a field of characteristic $c \neq 2$, and let $\M$ be an $\ell$-torsion module, with $c \nmid \ell$. There are elements $\alpha_1,\ldots,\alpha_{g+1}$, of degree $1,\ldots,g+1$, such that \[I_g=\M^{\bullet}(k) \oplus \alpha_1 \cdot \M^{\bullet}(k)_{\ell_g} \oplus \bigoplus_{i=2}^{g+1} \alpha_i \cdot \M^{\bullet}(k)_2 \]
is a submodule of $\mathrm{Inv}^{\bullet}(\mathcal{H}_g,\M)$. If $g$ is even, we have
\[\mathrm{Inv}^{\bullet}(\mathcal{H}^k_g,\M)=I_g \oplus \beta_{g+2}\cdot \M^{\bullet}(k)_2.\]
If $g$ is odd there is an exact sequence
\[0 \rightarrow I_g \oplus w_2\cdot \M^{\bullet}(k)_2 \rightarrow \mathrm{Inv}(\mathcal{H}^k_g,\M) \rightarrow {\rm N}^{\bullet}_g(k) \rightarrow 0\]
where $w_2$ has degree $2$, ${\rm N}^{\bullet}_g(k)\subseteq \M^{\bullet}(k)_2$ and the last map lowers degree by $g+2$.
\end{maintheorem}

The cohomological invariants in the Theorem have coefficients in a generic $\ell$-torsion cycle module $\M$, where $\ell$ is a positive integer not divided by $c$. This represents a large increase in generality compared to the authors' previous results. In \cite{PirCohHypEven, PirCohHypThree, DilCohHypOdd} the result above is obtained under the assumption that the base field is algebraically closed and the coefficients are taken in (twisted) étale cohomology with coefficients in $\ZZ/p\ZZ$, where $p$ is a prime number different from $c$. In \cite{DilPir} the authors managed to lift the condition that $k$ should be algebraically closed, but still used mod $p$ étale cohomology as coefficients.

A general $\ell$-torsion cycle module $\M$ is a module over the cycle module $\H_{\ZZ/\ell\ZZ}$ given by twisted mod $\ell$ étale cohomology, which has a natural graded-commutative ring structure. So the technical improvement between this result and the authors' previous results is composed of two steps: going from $\H_{\ZZ/p\ZZ}$ to $\H_{\ZZ/\ell\ZZ}$, and thus giving up the $\mathbb{F}_p$-vector space structure, and from $\H_{\ZZ/\ell\ZZ}$ to $\M$, and thus giving up the ring structure. The second step is the hardest, as the modules appearing are not very well behaved, not being free or faithful or even necessarily finitely generated. Nonetheless, as the formula in the Theorem above shows, they will still admit reasonable decompositions in terms of the action of $\H_{\ZZ/\ell\ZZ}$. 

Dealing with more general cycle modules is necessary to obtain Brauer groups as we need to consider twisted étale cohomology with coefficients in $\ZZ/\ell\ZZ(-1)=\mu_{\ell}^{\vee}$. We could restrict ourselves to just considering the cycle modules $\H_{D}$ coming from twisted étale cohomology with coefficients in a Galois module $D$, but extending the results to all torsion cycle modules (and in fact for much of the paper to all cycle modules) requires little extra effort. 

As part of our supporting results, we obtain a sharpening of recent results by Gille and Hirsch \cite{GilHir} on classical cohomological invariants (with generalized coefficients) which might be of interest by itself.

\subsection*{Outline of the paper}
In Section \ref{sec:preliminaries} we establish the basic results we will need for the rest of the paper on Brauer groups, cohomological invariants and Chow groups with coefficients. In particular in Subsection \ref{sec:CohInvBrauer} we prove that the cohomological Brauer group of a quotient stack is computed by cohomological invariants.

Section \ref{sec:m11} contains a first demonstration of our techniques. We extend a computation from \cite{PirAlgStack} to compute the cohomological invariants with general coefficients of the stack of elliptic curves $\Mcal_{1,1}$ and use it to compute its Brauer group, partially retrieving Antieau and Meier's result \cite{AntMeiEll}.

In Section \ref{sec:equiv} we compute the equivariant Chow groups with coefficients of various classifying stacks ${\rm B}G$, which will be used in our main computation.

Section \ref{sec: Sn PGL2} is dedicated to computing the generalized cohomological invariants of ${\rm BS}_n$ and ${\rm BPGL}_2$. We use Gille and Hirsch's splitting principle \cite{GilHir}, complementing it with some equivariant computations which show that in every case where the splitting principle applies, any non-trivial cohomological invariant is of $2$-torsion.

In Section \ref{sec: pres Hg} we describe a presentation of $\Hcal_g$ by Arsie and Vistoli \cite{ArsVis} and restate some results on the relation between the cohomological invariants of $\Hcal_g$ and ${\rm BS}_{2g+2}$ from \cite{DilPir}.

Section \ref{sec: Coh Inv Hg} is where we put all the results together to obtain our computation of the cohomological invariants of $\Hcal_g$.

Finally, in Section \ref{sec: Br Hg} we specialize the computation of cohomological invariants to obtain a presentation of the Brauer group and we describe each generator.




\subsection*{Acknowledgements} The idea for this paper originated by a question posed to the second author by R. Fringuelli. We thank him for asking the right question at the right time. We are thankful to David Rydh for helpful discussions and a very careful reading of a preliminary draft of the paper, and to Burt Totaro, Zinovy Reichstein and Angelo Vistoli for helpful suggestions.

\subsection*{Notation} We work over a base field $k$ of characteristic $c \neq 2$. The notation $\ell$ will be reserved for a positive integer, not necessarily prime, that is not divisible by $c$.

Every scheme and algebraic (also known as Artin) stack is assumed to be of finite type over $\Spec(k)$. By a Galois module over $k$ we always mean a locally constant sheaf of abelian groups on the small \'etale site of $\Spec(k)$.

Unless otherwise stated, by ${\rm H}^{i}(X,F)$ we always mean \'etale cohomology, or lisse-étale cohomology for algebraic stacks. If $R$ is a $k$-algebra, we will write ${\rm H}^{i}(R,F)$ for ${\rm H}^{i}(\Spec(R),F)$. Given a graded abelian group $A$, we denote by $A_\ell$ the $\ell$-torsion subgroup and by $A\!\left[d\right]$ the group shifted in degree by $d$.

\section{Preliminaries}\label{sec:preliminaries}


\subsection{Brauer group, cohomological Brauer group, cyclic algebras}

Given a Noetherian scheme $X$, the \emph{Brauer group} ${\rm Br}(X)$ is the group of Azumaya algebras over $X$, i.e.\ sheaves of unitary algebras which are \'etale locally isomorphic to the endomorphism group of a vector bundle over $X$, modulo the relation that $\mathcal{E} \sim \mathcal{E}'$ if there exist vector bundles $V$ and $V'$ such that $\mathcal{E} \otimes {\rm End}(V) \simeq \mathcal{E}' \otimes {\rm End}(V')$. This relation corresponds to Morita equivalence, and the group operation is given by tensor product.

The rank of an Azumaya algebra is always a square $n^2$, and algebras of rank $n^2$ are classified by $P{\rm GL}_n$-torsors, with the trivial ones coming from ${\rm GL}_n$-torsors. We have an exact sequence
\[1 \rightarrow \Gm \rightarrow {\rm GL}_n \rightarrow {\rm PGL}_n \rightarrow 1\]
which induces an exact sequence 
\[{\rm H}^1_{\textnormal{\'et}}(X, {\rm GL}_n) \rightarrow {\rm H}^1_{\textnormal{\'et}}(X, {\rm PGL}_n) \rightarrow {\rm H}^2_{\textnormal{\'et}}(X, \Gm).\]
The class of an Azumaya algebra always maps to a torsion element of ${\rm H}^2_{\textnormal{\'et}}(X, \Gm)$, and the map is injective, so that ${\rm Br}(X) \subseteq {\rm H}^2(X, \Gm)_{\rm tor}$. We call the torsion subgroup ${\rm H}^2_{\textnormal{\'et}}(X, \Gm)_{\rm tor}$ the \emph{cohomological Brauer group} of $X$, denoted ${\rm Br}'(X)$. In the setting of schemes, due to results of Gabber and de Jong \cites{Gab81, DJ}, we know that ${\rm Br}(X)={\rm Br'}(X)$ whenever $X$ carries an ample line bundle.

The definition of the Brauer group can be vastly extended. Given an algebraic stack, one can define the Brauer group ${\rm Br}(\Xcal)$, and if $\Xcal$ is quasi-compact or connected the inclusion ${\rm Br}(\Xcal) \subseteq {\rm Br}'(\Xcal)$ holds (note that for algebraic stacks we will have to use Lisse-\'etale cohomology). 

The cohomological Brauer group is often easier to compute, and in all of our computations we will work with it and then check a posteriori that every element we find comes from an Azumaya algebra. One very important type of elements that we know always come from the Brauer groups are those given by \emph{cyclic algebras}.

First, note that if $\ell$ is prime to ${\rm char}(k)$, and $\Xcal$ is an algebraic stack, due to the Kummer exact sequence 
\[1 \rightarrow \mu_\ell \rightarrow \Gm \rightarrow \Gm \rightarrow 1\]
the $\ell$-torsion of the ${\rm Br}'(\Xcal)$ is the image of ${\rm H}^2_{\textnormal{lis-\'et}}(\Xcal, \mu_\ell)$. Now, given elements 
\[\alpha \in {\rm H}^1_{\textnormal{lis-\'et}}(\Xcal, \mu_\ell),\quad \beta \in {\rm H}^1_{\textnormal{lis-\'et}}(\Xcal, \ZZ/\ell\ZZ)\]
there exists a canonical Azumaya algebra $\mathcal{A}_{\alpha, \beta}$ whose class in ${\rm H}^2_{\textnormal{lis-\'et}}(\Xcal, \mu_\ell)$ is equal to the cup product 
\[\alpha \cdot \beta \in {\rm H}^2_{\textnormal{lis-\'et}}(\Xcal, \mu_\ell) = {\rm Br}'(\Xcal)_\ell.\]

In particular, any element of ${\rm Br}'$ that can be written as above automatically belongs to the Brauer group.

The equality ${\rm Br}(\Xcal)={\rm Br}'(\Xcal)$, at least for the prime-to-${\rm char}(k)$ part, is known to hold for a large class of Deligne--Mumford stacks: the following theorem is a combination of a result by Edidin, Hassett, Kresch and Vistoli, \cite{EHKV}*{Thm. 3.6}\footnote{The theorem is stated for Noetherian schemes, but the key lemmas needed for the proof, in Section 2, are all formulated in terms of DM stacks and the proof of the theorem carries word by word for a separated Noetherian DM stack.}, which says that an element $\alpha \in {\rm Br}'(\Xcal)$ comes from the Brauer group if and only if the corresponding $\mu_n$-gerbe is a quotient stack, and a result by Kresch and Vistoli, \cite{KV}*{Thm. 2.2}, which gives a criterion for a Deligne--Mumford stack to be a quotient stack.

\begin{thm}[EHKV]\label{thm:EHKV}
Let $\Xcal$ be a smooth, separated, generically tame DM stack of finite type over $k$, and assume that $\Xcal$ has a quasi-projective coarse moduli space. Then we have $^c{\rm Br}(\Xcal)=\hspace{1pt}^c{\rm Br}'(\Xcal)$.
\end{thm}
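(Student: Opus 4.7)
The plan is to combine the two results cited in the paragraph preceding the statement in a fairly direct way. Given a class $\alpha \in \hspace{1pt}^c{\rm Br}'(\Xcal)$, it has some order $n$ which is invertible in $k$ by hypothesis. Using the Kummer sequence, $\alpha$ lifts (up to a class coming from $\H^1(\Xcal,\Gm)=\mathrm{Pic}(\Xcal)$, which does not matter for what follows) to a class $\tilde\alpha \in \H^2_{\textnormal{lis-\'et}}(\Xcal,\mu_n)$. This class classifies a $\mu_n$-gerbe $\Gcal_\alpha \to \Xcal$. By the EHKV criterion \cite{EHKV}*{Thm.\ 3.6}, to show that $\alpha$ lies in ${\rm Br}(\Xcal)$ it suffices to prove that $\Gcal_\alpha$ is a quotient stack.

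To do this, I would verify that $\Gcal_\alpha$ meets the hypotheses of Kresch--Vistoli \cite{KV}*{Thm.\ 2.2}. First, $\Gcal_\alpha \to \Xcal$ is smooth because $\mu_n$ is smooth over $k$ when $n$ is prime to $c$, so $\Gcal_\alpha$ is smooth over $k$; it is separated because gerbes for finite groups are separated and separatedness is stable under composition; it is Deligne--Mumford since $\mu_n$ is \'etale, and of finite type because the gerbe is fppf-locally of the form $\Brm\mu_n\times\Xcal$. For the coarse moduli space, the key point is that the stabilizers of $\Gcal_\alpha$ are extensions of the stabilizers of $\Xcal$ by $\mu_n$, hence linearly reductive wherever those of $\Xcal$ are, so $\Gcal_\alpha$ is itself generically tame. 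The coarse moduli space of $\Gcal_\alpha$ coincides with that of $\Xcal$ (since the automorphism group of every object is augmented by $\mu_n$, which acts trivially on the coarse space), and is therefore quasi-projective by assumption.

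With these properties in hand, Kresch--Vistoli's criterion applies to $\Gcal_\alpha$, showing it is a quotient stack $[Y/G]$ for some quasi-projective $Y$ and linear algebraic group $G$. Plugging this into \cite{EHKV}*{Thm.\ 3.6} gives an Azumaya algebra on $\Xcal$ whose Brauer class is $\alpha$, concluding the proof.

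The main obstacle is the check that the hypotheses of Kresch--Vistoli are genuinely satisfied by $\Gcal_\alpha$, specifically the interplay between the generically tame hypothesis on $\Xcal$ and the tameness of the gerbe: Kresch--Vistoli's result needs the stabilizer groups to be linearly reductive at least generically, and one must track through the extension $1 \to \mu_n \to \mathrm{Aut}_{\Gcal_\alpha}(x) \to \mathrm{Aut}_\Xcal(x) \to 1$ to confirm this, using that $n$ is coprime to $c$. Everything else is a formal consequence of the behavior of gerbes under smooth, separated morphisms and of the fact that a $\mu_n$-gerbe shares its coarse moduli space with its base.
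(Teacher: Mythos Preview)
Your proposal is correct and follows precisely the approach indicated in the paper: the theorem is not proved in detail there but is stated as a direct combination of \cite{EHKV}*{Thm.~3.6} and \cite{KV}*{Thm.~2.2}, with the former reducing the question to whether the associated $\mu_n$-gerbe is a quotient stack and the latter supplying the criterion. Your write-up simply fills in the verification that the gerbe $\Gcal_\alpha$ inherits the hypotheses (smooth, separated, finite type, DM, generically tame, quasi-projective coarse moduli space) needed to apply Kresch--Vistoli, which is exactly the content the paper leaves implicit.
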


In particular, the hypotheses of the Theorem are valid for the stacks $\Hcal_g$ of hyperelliptic curves and for the stack $\Mcal_{1,1}$ of elliptic curves, so we will know a priori that all the elements we produce belong to the Brauer group. We remark that the explicit descriptions of the elements we give would also be sufficient to show that they all belong to the Brauer group.

\subsection{Generalized cohomological invariants}


Classically, cohomological invariants are defined as natural transformations from the functor \[
{\rm T}_{G}:({\rm Field}/k) \to ({\rm Set}),\quad {\rm T}_{G}(F)\stackrel{\rm def}{=}\lbrace G\mbox{-torsors over}\,\, F/{\rm iso} \rbrace\]  which sends a field $F$ to the isomorphism classes of $G$-torsors over $F$, to the twisted cohomology functor \[{\rm H}_{D}:({\rm Field}/k) \to ({\rm Set}),\quad {\rm H}^{\bullet}_{D}(F)\stackrel{\rm def}{=}\oplus_i{\rm H}^i_{\rm Gal}(F,D(i))\] where $D$ is a torsion Galois module. This is the definition in Garibaldi, Merkurjev and Serre's book \cite{GMS}.

In \cite{PirAlgStack}, the second author extended the notion of cohomological invariants to a theory of invariants for algebraic stacks, retrieving the classical theory when the stack is equal to the classying stack of $G$-torsors ${\rm B}G$.

In both the classical case and the more general version of cohomological invariants we use in this paper, one can take a more general approach regarding coefficients and define cohomological invariants as natural transformations to any of Rost's cyle modules, as defined in \cite{Rost}. 

These kind of generalized invariants were considered by both Guillot \cite{Guil}*{Sec. 6} and the second author \cite{PirAlgStack}*{Sec. 6}, but as recently pointed out by Gille and Hirsch \cite{GilHir}*{Sec. 3}, both approaches only work for cycle modules defined over every extension of the base field $k$, such as Galois cohomology and Milnor's $\K$-theory, while in general a cycle module $M$ is only defined over finitely generated extensions of $k$. 

In their paper Gille and Hirsch rework the foundations of the theory in this more general case, and prove a splitting formula for finite reflection groups and Weil groups. Following their approach, in this section we will give a corrected definition of generalized cohomological invariants of algebraic stacks and show that the usual properties apply. In Section \ref{sec: Sn PGL2}, we will use equivariant techniques to obtain a sharpening of their splitting formula.

Given a field $F$, denote by ${\rm T}(F^*)$ the tensor ring $\oplus_{i\geq 0} (F^*)^{\otimes_{\ZZ}i}$. Milnor's $\K$-theory ring $\K_{\rm M}^\bullet(F)$ is given by 
\[
\K_{\rm M}^\bullet(F)={\rm T}(F^*)/\lbrace a \otimes b \mid a, b \in F^*, \, a+b=1 \rbrace
\]
For a field extension $\phi:F\to F'$ there is a map $\phi^*:\K_{\rm M}^{\bullet}(F) \to \K_{\rm M}^{\bullet}(F')$ given by restriction of scalars. If the extension is finite we have a map $\phi_*:\K_{\rm M}^{\bullet}(F') \to\K_{\rm M}^{\bullet}(F) $ given by the norm map. These maps are functorial and compatible with each other. Moreover, we have a projection formula
\[\phi_* \circ \phi^* = \left[ F':F \right] {\rm Id_{\K_{\rm M}^{\bullet}(F)}}.  \]
If the extension $F'/F$ is purely inseparable, we moreover have the opposite equality 
\[\phi^* \circ \phi_* = \left[ F':F \right] {\rm Id_{\K_{\rm M}^{\bullet}(F')}}.  \]
We call a DVR $(R,v)$ \emph{geometric} if $R$ is a $k$-algebra and the transcendence degree of the quotient field $F_R$ is one higher than the transcendence degree of the residue field $F_v$. In this case we have a boundary map

\[\partial_v: \K_{\rm M}^\bullet(F_R) \to \K_{\rm M}^{\bullet-1}(F_v).\]

In degree $1$, this map is just the valuation $v$. Finally, let $(R,v)$ as above and let $\pi$ be a uniformizer for $v$. We define a map 
\[ s_v^{\pi}: \K_{\rm M}^{\bullet}(F_R) \to \K_{\rm M}^{\bullet}(F_v), \quad
s_v^{\pi}(x) = \partial_v(\pi \cdot x).
\]
If $\partial_v(x)=0$ the map does not depend on the choice of $\pi$ and we just denote it $s_v$.

Milnor's $\K$-theory is the basic, and most important, cycle module. In general, denote $\mathfrak{F}_k$ the subcategory of $({\rm Field}/k)$ given by finitely generated extensions. A cycle module $M$ is a contravariant functor\footnote{We choose to see cycle modules as contravariant functors from the opposite category, contrary to what Rost does, so that the pullback/pushforward notation agrees with the one for Chow groups with coefficients.} from $\mathfrak{F}_k^{\rm op}$ to the category of graded abelian groups such that $M^{\bullet}(F)$ is a $\K^{\bullet}_{\rm M}(F)$-module for all $F$, and which satisfies a long list of properties, such as having the four operations $\phi_*, \phi^*, \partial_v, s_{v}^{\pi}$, these operations being compatible with the ones on $\K_{\rm M}$ and with each other, having a projection formula, etc \cite{Rost}*{Sec. 1-2}.

The key examples of cycle modules other than Milnor's $\K$-theory are given by the twisted Galois cohomology cycle modules ${\rm H}_D$. Note that by Voevodsky's Norm-Residue isomorphism \cite{Voe}*{Thm. 6.1} we have the equality \[\K_{\rm M}\!/(\ell) ={\rm H}_{\ZZ/\ell \ZZ}\stackrel{\rm def}{=} \K_{\ell}.\]

Given a geometric DVR $(R,v)$, Rost defines a group ${\rm M}^\bullet(v)$ (not to be confused with ${\rm M}^{\bullet}(F_v)$) with a map $p:{\rm M}^{\bullet}(F_R) \rightarrow {\rm M}^{\bullet}(v)$. The group sits in the exact sequence
\[0 \to {\rm M}^{\bullet}(F_v) \xrightarrow{i} {\rm M}^{\bullet}(v) \xrightarrow{\partial} {\rm M}^{\bullet}(F_v) \to 0 \]

and the composition $\partial \circ p$ is equal to $\partial_v$.

When working with Galois cohomology, the group ${\rm H}_D^{\bullet}(v)$ is equal to  ${\rm H}^{\bullet}_D(F_{R^h})$, where $(R^{h},v)$ is the Henselization of $(R,v)$ \cite{GMS}*{7.10}. The definition of cohomological invariants of an algebraic stack \cite{PirAlgStack}*{Def. 2.2} includes a continuity condition which can be rephrased as stating that, for an Henselian DVR $R^h$, the value of $p(\alpha(F_{R^{h}}))$ should be equal to $i(\alpha(F_v))$. This suggests the following general definition:

\begin{df}
Let $\Xcal$ be an algebraic stack. A cohomological invariant with coefficients in ${\rm M}$ is a natural transformation
\[\alpha:{\rm Pt}_{\Mcal} \longrightarrow {\rm M}\]
such that, for any geometric DVR $(R,v)$ and any map from the spectrum of $R$ to $\Xcal$ we have
\[ p(\alpha(F_R)) = i(\alpha(F_v)) \in {\rm M}^{\bullet}(v). \]

The cohomological invariants with coefficients in $\M$ of $\Xcal$ form a graded group $\Inv(\Xcal,{\rm M})$. 

Given a morphism $f:\Ycal \to \Xcal$ we define a pullback $f^*:\Inv(\Xcal,{\rm M}) \to \Inv(\Ycal,{\rm M})$ by setting $f^*(\alpha(q))=\alpha(f(q))$.
\end{df}

Note that this definition differs from the one in \cite{PirAlgStack}*{Sec. 6} as it requires the continuity condition on all DVRs, rather than just Henselian ones. As will be evident in the rest of the section, the theory we obtain with this definition is exactly the same as what we obtain in \emph{Loc. Cit.}

\begin{rmk}
In our notation, we call ${\rm Inv}^i({\rm B}G, \M)$ the group denoted by ${\rm Inv}^i(G,\M)$ in \cite{GilHir}, and ${\rm Inv}^i({\rm B}G, {\rm H}_D)$ the group denoted by ${\rm Inv}^i(G,D)$ in \cite{GMS}. 
\end{rmk}

\begin{rmk}
Given an étale extension $(R',v')$ of $(R,v)$ with the same residue field $F_v=F=F_{v'}$ we have an isomorphism ${\rm M}^{\bullet}(v) \simeq {\rm M}^{\bullet}(v')$ and the continuity condition for $(R',v')$ is equivalent to the continuity condition for $(R,v)$. 

In other words, we can check the continuity condition on any \emph{Nisnevich neighbourhood} of the closed point of $\Spec(R)$.

Moreover, if $M$ is defined on all extensions of $k$ we can restrict to checking the condition on the Henselization $R^h$.
\end{rmk}

Recall that a smooth-Nisnevich morphism \cite{PirAlgStack}*{Def. 3.2} is a representable smooth morphism $f:\mathcal{Y} \to \Xcal$ of algebraic stacks such that for any point $p:\Spec(F) \to \Xcal$ we have a lifting
\[
\xymatrix{  & \mathcal{Y} \ar[d]^f \\
        \Spec(F) \ar[ur]^{p'} \ar[r]^{p} & \Xcal  }
\]
A smooth $\ell$-Nisnevich morphism  \cite{PirAlgStack}*{Def. 3.4} is a representable smooth morphism $f:\mathcal{Y} \to \Xcal$ of algebraic stacks such that for any point $p:\Spec(F) \to \Xcal$ we have a commutative square
\[
\xymatrix{ \Spec(A) \ar[r]^{p'} \ar[d]^{\phi} & \mathcal{Y} \ar[d]^f \\
        \Spec(F)  \ar[r]^{p} & \Xcal  }
\]
where $A=F_1 \times \ldots \times F_r$, for each $i$ the extension $F_i/F$ is finite and separable and $(\left[F_1:F\right],\ldots,\left[F_r:F\right],\ell)=1$. 

Given an algebraic stack $\Xcal$ we define the smooth-Nisevich and smooth $\ell$-Nisnevich sites of $\Xcal$ as the sites $({\rm Spc}/\Xcal)_{\rm sm-Nis}$ and $({\rm Spc}/\Xcal)_{\rm sm \,\,\ell-Nis}$ where the objects are representable maps to $\Xcal$, morphisms are commutative squares over the identity of $\Xcal$ and coverings are respectively smooth-Nisnevich and smooth $\ell$-Nisnevich morphisms \cite{PirAlgStack}*{Def. 3.9}.

Let $A$ be as above. Then the fibered product $\Spec(A)\times_{\Spec(F)} \Spec(A)$ is a disjoint union $\Spec(E_1) \sqcup \ldots \sqcup \Spec(E_n)$ where each of the $E_j$ is a finite separable extension of $F$. There are two pullbacks ${\rm Pr}_1^*, {\rm Pr}_2^*:\oplus_i M(F_i) \to \oplus_j {\rm M}(E_j)$. We have a sequence 
\[
0 \to {\rm M}(F) \to \oplus_i {\rm M}(F_i) \xrightarrow{{\rm Pr}_1^* - {\rm Pr}_2^*} \oplus_j {\rm M}(E_j)
\]
which is always left exact when ${\rm M}$ is of $\ell$-torsion. We say that ${\rm M}$ has property $(\mathcal{S}_{\ell})$ if ${\rm M}$ is of $\ell$-torsion and the sequence above is exact for any $F\in \mathfrak{F}_k$ and $A$ as above. This is in particular true for ${\rm M}={\rm H}_{D}$ where $D$ is a $\ell$-torsion Galois module, see \cite{PirAlgStack}*{Lm. 3.7}.

\begin{thm}\label{thm:M sheaf}
The functor $\Inv(-,{\rm M})$ is a smooth-Nisnevich sheaf.

If ${\rm M}$ is of $\ell$-torsion, then the pullback of $\Inv(-,{\rm M})$ through a smooth $\ell$-Nisnevich covering is injective. 

If moreover ${\rm M}$ has property $(\mathcal{S}_{\ell})$, then $\Inv(-,{\rm M})$ is a smooth $\ell$-Nisnevich sheaf.
\end{thm}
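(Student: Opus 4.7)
The plan is to follow the template of the proof of \cite{PirAlgStack}*{Thm.~3.10}, adapting the arguments to cycle modules $\M$ with continuity checked on all geometric DVRs rather than only on Henselian ones. The three assertions would be established in order.

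\emph{Smooth-Nisnevich sheaf property.} Let $f:\Ycal\to\Xcal$ be a smooth-Nisnevich covering. To check that $f^*$ is injective on $\Inv(-,\M)$, given $\alpha$ with $f^*\alpha=0$ and a point $p:\Spec(F)\to\Xcal$, the smooth-Nisnevich lifting property produces a lift $p':\Spec(F)\to\Ycal$, and naturality forces $\alpha(p)=(f^*\alpha)(p')=0$. For the equalizer condition, given $\beta\in\Inv(\Ycal,\M)$ whose two pullbacks to $\Ycal\times_\Xcal\Ycal$ agree, I would define $\alpha(p):=\beta(p')$ for any chosen lift; pairing two lifts into $(p',p''):\Spec(F)\to\Ycal\times_\Xcal\Ycal$ and applying the equalizer assumption shows that the value is independent of the choice, and naturality in $F$ is inherited from $\beta$. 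The nontrivial point is the continuity of $\alpha$ at a geometric DVR $(R,v)\to\Xcal$: combining smoothness of $f$ with the smooth-Nisnevich property at the closed point, I would produce a Nisnevich neighborhood $(R',v')$ of $(R,v)$ carrying a lift $\Spec(R')\to\Ycal$; by the remark following the definition of $\Inv(-,\M)$, continuity at $(R,v)$ is equivalent to continuity at $(R',v')$, which follows from the continuity of $\beta$ via the isomorphism $\M^\bullet(v)\simeq\M^\bullet(v')$.

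\emph{Injectivity along smooth $\ell$-Nisnevich coverings when $\M$ is of $\ell$-torsion.} Given $\alpha$ with $f^*\alpha=0$ and $p:\Spec(F)\to\Xcal$, lift to $p':\Spec(A)\to\Ycal$ with $A=F_1\times\cdots\times F_r$ of degrees whose gcd is prime to $\ell$. Choosing $a_i\in\ZZ$ with $\sum_i a_i[F_i:F]\equiv 1\pmod\ell$ and writing $\phi_i:F\to F_i$, the projection formula gives
\[\sum_i a_i\,\phi_{i*}\phi_i^*\alpha(p)=\Bigl(\sum_i a_i[F_i:F]\Bigr)\alpha(p)=\alpha(p);\]
since each $\phi_i^*\alpha(p)=\alpha(p'|_{\Spec F_i})=0$ by assumption, $\alpha(p)=0$.

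\emph{Smooth $\ell$-Nisnevich sheaf property under $(\Scal_\ell)$.} For the equalizer condition, given $\beta$ on $\Ycal$ whose pullbacks to $\Ycal\times_\Xcal\Ycal$ agree, I would construct $\alpha$ pointwise: for $p:\Spec(F)\to\Xcal$ choose a smooth $\ell$-Nisnevich lift $p':\Spec(A)\to\Ycal$, observe that $\beta(p')\in\oplus_i\M^\bullet(F_i)$ is equalized by the two projections from $\Spec(A\otimes_F A)=\bigsqcup_j\Spec(E_j)$, and invoke $(\Scal_\ell)$ to obtain a unique $\alpha(p)\in\M^\bullet(F)$ restricting to $\beta(p')$. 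Independence from the lift and naturality in $F$ reduce to the uniqueness in $(\Scal_\ell)$ applied to a common refinement of two lifts, which is again smooth $\ell$-Nisnevich. The main obstacle will be continuity of the reconstructed $\alpha$ at a geometric DVR $(R,v)\to\Xcal$: the $(\Scal_\ell)$ descent takes place over fields, whereas continuity is phrased in the mixed groups $\M^\bullet(w)$ attached to DVRs. The plan is to lift the map from $\Spec(R)$ to a smooth $\ell$-Nisnevich neighborhood $\Spec(B)\to\Ycal$ with $B$ a product of DVRs $(R_i,v_i)$ whose fraction (resp.\ residue) fields are finite separable extensions of $F_R$ (resp.\ $F_v$) of compatible degrees, and then upgrade the field-level descent of $(\Scal_\ell)$ to a descent for the groups $\M^\bullet(v)$ using the exact sequence
\[0\to \M^\bullet(F_v)\xrightarrow{\,i\,}\M^\bullet(v)\xrightarrow{\,\partial\,}\M^\bullet(F_v)\to 0\]
together with the compatibilities of $\partial$, $i$, $\phi_*$, $\phi^*$ from \cite{Rost}*{Sec.~1--2}. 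The equality $p(\alpha(F_R))=i(\alpha(F_v))$ for $\alpha$ then follows componentwise from the continuity of $\beta$ on the factors of $\Spec(B)$.
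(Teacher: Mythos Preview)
Your proposal is correct and follows essentially the same approach as the paper's proof, which explicitly references \cite{PirAlgStack}*{Thm.~3.8} and proceeds by the same template: construct $\alpha$ from lifts, verify continuity by passing to an \'etale extension $(R',v')$ with the same residue field (the paper obtains this via a section over $\Spec(R^h)$), and handle the $\ell$-Nisnevich case by the transfer/injectivity argument and then ``proceed exactly as above.'' Your write-up is more explicit than the paper in several places---notably the coefficient argument $\sum a_i[F_i{:}F]\equiv 1\pmod\ell$ for injectivity and the plan to descend continuity through a product of DVRs in the $(\mathcal{S}_\ell)$ case---but these are natural elaborations of steps the paper leaves implicit; in particular, for continuity in the $\ell$-Nisnevich case you only need injectivity of $\M^\bullet(v)\to\bigoplus_i\M^\bullet(v_i)$ (which already follows from $\ell$-torsion and transfer), not a full $(\mathcal{S}_\ell)$-type descent for $\M^\bullet(v)$.
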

\begin{proof}
The proof is very similar to the proof of \cite{PirAlgStack}*{Thm. 3.8}. Let $\pi:\mathcal{Y} \to \mathcal{X}$ be a smooth Nisnevich covering. We define an invariant $\alpha$ by setting $\alpha(p)=\alpha(p')$ for any lifting $p'$ of $p$. This is clearly well defined and functorial thanks to the sheaf condition.

Now let $(R,v)$ be a geometric DVR, let $f:\Spec(R) \to \Xcal$ be a morphism. Let $R^h$ be the Henselization of $R$. The morphism $\mathcal{Y} \times_{\Xcal} \Spec(R^h) \rightarrow \Spec(R^h)$ always has a section. This implies that we can find some finite extension $(R',v')$ of $R$ with the same residue field $F_v$ and a commutative square
\[
\xymatrix{ \Spec(R') \ar[r]^{f'} \ar[d]^{\phi} & \mathcal{Y} \ar[d]^\pi \\
        \Spec(R)  \ar[r]^{f} & \Xcal  }
\]
The continuity condition for $(R',v')$ implies the continuity condition for $(R,v)$, so we have proven our claim in the smooth-Nisnevich case.

Injectivity in the smooth $\ell$-Nisnevich case is trivial as given an extension $F \subset F_1 \times \ldots \times F_r$ as above the map 
\[
\phi_1^* \times \ldots \times \phi_r^* : {\rm M}^{\bullet}(F) \to {\rm M}^{\bullet}(F_1) \oplus \ldots {\rm M}^{\bullet}(F_r)
\]
is injective. If condition $(\mathcal{S}_{\ell})$ holds, then we can proceed exactly as above to show that cohomological invariants form a smooth $\ell$-Nisnevich sheaf.
\end{proof}

As a direct consequence of the continuity condition, we see that given a geometric DVR $R$ the value of a cohomological invariant being zero at the quotient field implies the same for the residue field. Then it is easy to prove by induction that this property propagates to regular local rings and regular schemes of finite type.

\begin{lm}
Let $X$ be a regular, connected scheme of finite type over $k$, with generic point $\xi$. Then for any $\alpha \in \Inv(X,{\rm M})$ we have
\[ \alpha(\xi)=0 \Rightarrow \alpha=0. \]
\end{lm}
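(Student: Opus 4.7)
The plan is to deduce the claim in three steps: first handle a single geometric DVR via the continuity condition, then propagate through regular local rings of arbitrary Krull dimension by induction, and finally pass from stalks to the scheme $X$.

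For the DVR base case, let $(R,v)$ be a geometric DVR equipped with a morphism $\Spec(R) \to X$, and suppose $\alpha(F_R)=0$. The continuity condition reads $p(\alpha(F_R)) = i(\alpha(F_v))$, so $i(\alpha(F_v))=0$. The short exact sequence
\[0 \to \M^{\bullet}(F_v) \xrightarrow{i} \M^{\bullet}(v) \xrightarrow{\partial} \M^{\bullet}(F_v) \to 0\]
forces $i$ to be injective, whence $\alpha(F_v)=0$.

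Next, I would prove by induction on $n = \dim A$ the following local statement: for $A$ a regular local ring arising as a localization of a finitely generated $k$-algebra and equipped with a morphism $\Spec(A) \to X$, if $\alpha$ vanishes at $\mathrm{Frac}(A)$ then it also vanishes at the residue field $A/\mathfrak{m}_A$. The case $n=0$ is tautological, since $\mathrm{Frac}(A)=A$. For $n \geq 1$, choose a regular parameter $t_1 \in \mathfrak{m}_A$. Then $(t_1)$ is a prime of height one, so $A_{(t_1)}$ is a DVR with fraction field $\mathrm{Frac}(A)$ and residue field $\mathrm{Frac}(A/(t_1))$, and it is geometric thanks to the standard dimension formula $\dim X = \dim \mathcal{O}_{X,x} + \mathrm{tr.deg}_k \kappa(x)$ for integral finite-type $k$-schemes. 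Applying the DVR base case yields $\alpha(\mathrm{Frac}(A/(t_1)))=0$. Since $A/(t_1)$ is again a regular local ring of dimension $n-1$ arising as a localization of a finitely generated $k$-algebra, the inductive hypothesis closes the step.

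Finally, since $X$ is connected and regular it is irreducible, with unique generic point $\xi$, and hence $\mathrm{Frac}(\mathcal{O}_{X,x})=\kappa(\xi)$ for every $x \in X$. The hypothesis $\alpha(\xi)=0$ combined with the local statement gives $\alpha(\kappa(x))=0$ for every $x$; since any field-valued point $\Spec(F) \to X$ factors through $\Spec(\kappa(x))$ for some $x$, functoriality concludes $\alpha=0$. The main subtle point is verifying at each inductive step that the DVR we produce is genuinely geometric, which boils down to the dimension formula just mentioned.
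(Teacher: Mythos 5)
Your proof is correct and follows exactly the strategy the paper indicates (and delegates to \cite{GilHir}*{Cor. 3.6} and \cite{PirAlgStack}*{Lm. 4.5}): injectivity of $i$ in the exact sequence for ${\rm M}^\bullet(v)$ gives the DVR case from the continuity condition, and induction on the dimension of regular local rings via a regular parameter, with geometricity checked by the dimension formula, propagates vanishing from the generic point to all residue fields. Nothing to add.
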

\begin{proof}
This is proven exactly as in \cite{GilHir}*{Cor. 3.6} or \cite{PirAlgStack}*{Lm. 4.5}.
\end{proof}

Given a smooth, irreducible scheme $X$ over $k$, the \emph{zero codimensional Chow group with coefficients} $A^0(X,{\rm M})$ is the subgroup of ${\rm M}(k(X))$ of elements such that $\partial_v a = 0$ whenever $(R,v)$ is the local ring of a point $x \in X^{(1)}$, i.e. a point of codimension one in $X$.

The following Theorem shows that on smooth schemes, this group is equal to the group of cohomological invariants, and consequently in general the functor of cohomological invariants is a sheafification of $A^{0}(-,{\rm M})$ in the smooth-Nisnevich (or smooth $\ell$-Nisnevich) topology. The properties of Chow groups will coefficients will be discussed in depth in subsection \ref{sec:Chow}.

\begin{thm}
Let $X$ be a scheme, smooth over $k$. Then
\[ \Inv(X,{\rm M}) = A^0(X,{\rm M}). \]

Let $\Xcal$ be an algebraic stack, smooth over $k$. Then the functor $\Inv(-,{\rm M})$ is the smooth-Nisnevich sheafification of $A^0(-,{\rm M})$.
\end{thm}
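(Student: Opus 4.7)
The plan is to prove the scheme statement first, then deduce the stack statement from the smooth-Nisnevich sheaf property of $\Inv(-,\M)$ established in Theorem~\ref{thm:M sheaf}.

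For the scheme case, I reduce to $X$ connected with generic point $\xi$ and aim to show that evaluation $\Phi:\alpha\mapsto\alpha(\xi)$ is an isomorphism $\Inv(X,\M)\xrightarrow{\sim} A^0(X,\M)\subseteq \M^{\bullet}(k(X))$. For each codimension-one point $x\in X^{(1)}$ with local ring $(R_x,v_x)$, the continuity condition applied to $\Spec R_x\to X$ gives $p(\alpha(\xi))=i(\alpha(\kappa(x)))$ in $\M^{\bullet}(v_x)$; composing with $\partial$ and using $\partial\circ p=\partial_{v_x}$ together with $\partial\circ i=0$ (from the defining exact sequence of $\M^{\bullet}(v_x)$) yields $\partial_{v_x}(\alpha(\xi))=0$, so $\Phi$ lands in $A^0(X,\M)$. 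Injectivity of $\Phi$ is the preceding lemma.

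For surjectivity, given $a\in A^0(X,\M)$ I would construct an invariant $\alpha$ with $\alpha(\xi)=a$ via Rost's specialization formalism. For a map $p:\Spec F\to X$ whose image is the generic point of an integral closed subscheme $Z\subseteq X$, I set $\alpha(p)$ to be the pullback along $F/k(Z)$ of a well-defined restriction $a|_Z\in \M^{\bullet}(k(Z))$. This restriction is built inductively along a flag of regular subschemes realizing $Z$: at each codimension-one step, given a DVR $(R,v)$ inside $X$ with $\partial_v(a)=0$, the image $p(a)\in \M^{\bullet}(v)$ lies in $\ker(\partial)=\im(i)$, so there is a unique specialized value $\bar a\in \M^{\bullet}(F_v)$ which I take as the output. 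Once the assignment $a\mapsto a|_Z$ is checked to be independent of the chosen flag, functoriality in field extensions and the continuity condition on an arbitrary geometric DVR mapping to $X$ are immediate from the construction.

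For the stack case, let $Y\to\Xcal$ be a representable smooth-Nisnevich covering with $Y$ a smooth scheme. By the sheaf property from Theorem~\ref{thm:M sheaf}, $\Inv(\Xcal,\M)$ is the equalizer of $\Inv(Y,\M)\rightrightarrows \Inv(Y\times_{\Xcal}Y,\M)$. Using the scheme case to identify these groups with $A^0(Y,\M)$ and $A^0(Y\times_{\Xcal}Y,\M)$, the equalizer computes exactly the value of the smooth-Nisnevich sheafification of the presheaf $A^0(-,\M)$ at $\Xcal$, which is the claim. The main obstacle in the whole argument is the well-definedness of $a\mapsto a|_Z$: showing the specialization does not depend on the chosen flag of regular subschemes. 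This is essentially a bookkeeping inside Rost's cycle module axioms and is the same content that makes $A^0(-,\M)$ a birational invariant of smooth schemes; I would extract it from the material on Chow groups with coefficients developed in subsection~\ref{sec:Chow}.
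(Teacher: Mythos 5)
Your proposal is correct and follows essentially the same route as the paper: evaluation at the generic point lands in $A^0(X,\M)$ by the continuity condition, injectivity is the preceding lemma, surjectivity is Rost's specialization formalism (the paper simply cites \cite{Rost}, 12.2 and 12.4, for exactly the well-definedness of $a\mapsto a|_Z$ that you identify as the main obstacle), and the stack statement is deduced formally from Theorem \ref{thm:M sheaf}. The only difference is one of exposition: you sketch the flag-specialization construction that the paper outsources to Rost.
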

\begin{proof}
The second statement is an obvious consequence of the first and Theorem \ref{thm:M sheaf}.

To prove the first statement, let $X$ be a smooth, connected scheme over $k$ with generic point $\xi$. Note that given an invariant $\alpha$, the value $\alpha(\xi)$ is unramified at all points $x \in X^{(1)}$, so $\alpha(\xi) \in A^0(X,{\rm M})$. This map is injective by the Lemma above. On the other hand, an element $\beta \in A^0(X,{\rm M})$ defines a cohomological invariant by pullback (see \cite{Rost}*{p. 360, after Cor. 6.5} or \cite{Rost}*{12.2, 12.4}), and the two maps are inverse to each other.
\end{proof}

\begin{rmk}
When ${\rm M}={\rm H}_{D}$, by the Bloch-Ogus-Gabber exact sequence we have 
\[
\Inv(X,{\rm H}_{D})=A^0(X,{\rm H}_{D})={\rm H}^0_{\rm Zar}(X,{\rm H}^{\bullet}(-,D))
\]
that is, on smooth schemes cohomological invariants are the Zariski sheafification of the twisted cohomology functor ${\rm H}^{\bullet}(-,D)$. 
\end{rmk}

\begin{cor}\label{cor:hom inv}
Let $f:\mathcal{Y} \rightarrow \Xcal$ be map of smooth algebraic stacks over $k$. If $f$ is either:
\begin{itemize}
    \item An affine bundle.
    \item An open immersion whose complement has codimension at least $2$.
\end{itemize}
Then the pullback $f^*$ is an isomorphism.
\end{cor}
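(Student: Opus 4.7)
The plan is to reduce both statements to the corresponding assertions for the zero codimensional Chow group with coefficients $A^0(-,\M)$ on smooth schemes. By the preceding theorem, on a smooth scheme $X$ we have $\Inv(X,\M)=A^0(X,\M)$, and on any smooth algebraic stack the functor $\Inv(-,\M)$ is the smooth-Nisnevich sheafification of $A^0(-,\M)$. It therefore suffices to prove the scheme-level statements: for an open immersion $U\hookrightarrow X$ of smooth schemes whose complement has codimension at least $2$ the restriction $A^0(X,\M)\to A^0(U,\M)$ is an isomorphism, and for an affine bundle $\pi\colon E\to X$ of smooth schemes the pullback $\pi^*\colon A^0(X,\M)\to A^0(E,\M)$ is an isomorphism.

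The first scheme-level statement is essentially tautological from the definition: $A^0(X,\M)$ is the subgroup of $\M^\bullet(k(X))$ consisting of elements unramified at every point of codimension one, and when $X\setminus U$ has codimension $\geq 2$ the schemes $X$ and $U$ share the same generic point and the same codimension-one points. The second is Rost's homotopy invariance for Chow groups with coefficients applied in codimension zero; the general affine-bundle case reduces to the trivial one $X\times \bA^n$ by Zariski descent, since $A^0(-,\M)$ is itself a Zariski sheaf and the bundle is trivialized over a Zariski cover.

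To transfer these results to stacks, I would choose a smooth atlas $X\to\Xcal$ and form the base change $Y=\Ycal\times_{\Xcal} X$ together with the double fibered products $R=X\times_{\Xcal} X$ and $R_Y=Y\times_{\Ycal} Y = R\times_X Y$. The morphism $Y\to X$ inherits the property of $f$: affine bundles and open immersions with codimension-$\geq 2$ complement are both stable under flat base change, and the same holds on the double fibered products since smooth morphisms preserve codimension. Combining the two scheme-level statements applied simultaneously to the pairs $(X,Y)$ and $(R,R_Y)$ with the sheaf description of $\Inv(-,\M)$ provided by Theorem \ref{thm:M sheaf} identifies $\Inv(\Xcal,\M)$ with $\Inv(\Ycal,\M)$, as required.

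The main obstacle I anticipate is making the passage from the atlas to the stack rigorous: the sheaf property of Theorem \ref{thm:M sheaf} is formulated for smooth-Nisnevich covers, whereas a generic smooth atlas of an algebraic stack need not enjoy the requisite lifting property on field-valued points. In the quotient-stack setting in which the rest of the paper operates this is unproblematic, but in the stated generality one must either refine the atlas appropriately or invoke directly the sheafification description of $\Inv(-,\M)$ to validate the descent step. Once that is in place, the rest of the argument is a straightforward assembly of the scheme-level inputs.
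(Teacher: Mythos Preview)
Your proposal is correct and matches the paper's own approach: the paper's proof is the one-line remark that this follows from the corresponding properties of $A^0(-,\M)$ on schemes (Rost, Sec.~9 for affine bundles; by definition for codimension $\geq 2$ complements) together with the sheaf conditions. Your concern about whether a general smooth algebraic stack admits a smooth-Nisnevich atlas by schemes is legitimate and is indeed glossed over in the paper's terse proof; in the applications that follow, however, the stacks involved are quotient stacks and the issue is handled via equivariant approximation, so the point is moot for the paper's purposes.
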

\begin{proof}
This is an easy consequence of the same properties being true for $A^0(X,{\rm M})$ (\cite{Rost}*{Sec. 9} for the first statement, and the second one is by definition) and the sheaf conditions.
\end{proof}

Finally, the following Proposition shows that when working with an $\ell$-torsion cycle module we can always reduce to $\ell$ being a prime power.

\begin{prop}\label{prop: l=rs}
Assume that $\ell= r s$, with $(r,s)=1$, and let $\M$ be an $\ell$-torsion cycle module. Then
\begin{enumerate}
    \item $\M_r$ and $\M_s$ are both cycle modules.
    \item We have $\M=\M_r \times \M_s$, and if $\M$ has a product pairing the product respects the ring structure.
    \item We have $\Inv(\Xcal,\M)= \Inv(\Xcal, \M_r) \oplus \Inv(\Xcal,\M_s)$.
\end{enumerate}
\end{prop}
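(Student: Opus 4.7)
The plan is to reduce all three statements to the Chinese Remainder decomposition of $\ell$-torsion abelian groups, and then observe that every piece of structure appearing (cycle module data, ring pairing, continuity condition) is additive, hence automatically commutes with that decomposition.

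First I would fix integers $a,b$ with $ar+bs=1$ and record that on any abelian group $A$ annihilated by $\ell=rs$, multiplication by $bs$ is an idempotent endomorphism with image $A_r$ and kernel $A_s$, while multiplication by $ar$ is the complementary idempotent with image $A_s$. These two projections split $A=A_r\oplus A_s$ canonically, and the splitting is functorial under any group homomorphism $A\to A'$ because $bs$ and $ar$ are central integers.

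For part (1), to check that $\M_r$ and $\M_s$ are cycle modules I would invoke the fact that every piece of cycle module data $\phi^*,\phi_*,\partial_v,s_v^{\pi}$, along with the $\K_{\rm M}^{\bullet}$-action, is a homomorphism of abelian groups in the $\M$-variable. Such homomorphisms preserve $r$-torsion (resp.\ $s$-torsion) since they commute with multiplication by $r$ (resp.\ $s$), so the data restrict to $\M_r$ and $\M_s$. All of Rost's axioms are equations in these operations, so they descend without further work. Part (2) is then essentially a restatement: the decomposition $\M(F)=\M_r(F)\oplus\M_s(F)$ is already natural in $F$ by the previous step, giving a splitting of cycle modules. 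If $\M$ carries a product pairing, then $\M_r\cdot\M_s\subseteq \M_r\cap\M_s$, and any element in the intersection is killed by $ar+bs=1$ and hence zero, so the pairing respects the direct sum.

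For part (3), a cohomological invariant $\alpha\in\Inv(\Xcal,\M)$ decomposes as $\alpha=\alpha_r+\alpha_s$ simply by post-composing with the two projections; naturality in the field argument is preserved because the projections are multiplication by integers. The one thing to verify is that the continuity condition splits, but the maps $i$ and $p$ used to define $\M^{\bullet}(v)$ are additive, so they commute with the CRT projections; the condition $p(\alpha(F_R))=i(\alpha(F_v))$ in $\M^{\bullet}(v)$ therefore holds if and only if it holds after projecting to $\M_r^{\bullet}(v)$ and $\M_s^{\bullet}(v)$ separately.

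I do not expect a genuine obstacle: once the CRT idempotents are in hand, the proof is a bookkeeping verification that every operation in the definitions is $\ZZ$-linear and hence respects the idempotent decomposition. The only step with any content is the last one, checking that continuity splits, and that reduces to the additivity of $i$ and $p$.
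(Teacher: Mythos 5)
Your proposal is correct and follows essentially the same route as the paper: the paper also defines the splitting via multiplication maps $\M\xrightarrow{\cdot s}\M_r$, $\M\xrightarrow{\cdot r}\M_s$ (your Bezout idempotents $bs$, $ar$ are just a normalized version of these) and reduces parts (1) and (3) to the $\ZZ$-linearity of the four operations and of the continuity data. Your write-up merely fills in the details the paper dismisses as routine, such as $\M_r\cdot\M_s\subseteq\M_r\cap\M_s=0$ for the pairing and the splitting of the continuity condition.
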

\begin{proof}
The four operations $\phi^*, \phi_*, \partial_v, s_{v}^{\pi}$ all send $\M_r$ to itself. Verifying that the requirements in \cite{Rost}*{Sec. 1-2} are all satisfied is a routine exercise, proving $(1)$. Moreover, we have maps $\M \xrightarrow{\cdot s} \M_r, \, \M \xrightarrow{\cdot r} \M_s$. The resulting map $\M \xrightarrow{\rho} \M_r \times \M_s$ is clearly an isomorphism, proving $(2)$. Finally, the map $\rho$ induces a map at the level of cohomological invariants which again is easily shown to be an isomorphism, proving $(3)$.
\end{proof}

\subsection{Cohomological invariants and the cohomological Brauer group}\label{sec:CohInvBrauer}

In this Subsection we restate some results from the last section of \cite{PirAlgStack} connecting cohomological invariants and the cohomological Brauer group, and we extend them to quotient stacks. Recall that $\ell$ is a positive number not divisible by ${\rm char}(k)$.

\begin{lm}\label{InvBrauer}
Let $X$ be an algebraic space, quasi-separated and smooth over $k$. Then 
\[ {\rm Inv}^2(X,{\rm H}_{\mu_{\ell}^{\vee}}) = \mathrm{Br}'(X)_\ell. \]
\end{lm}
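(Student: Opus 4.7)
My plan is to chain two identifications, each matching one side of the claimed equality with the group of unramified degree-two classes $\mathrm{H}^2_{\mathrm{nr}}(X,\mu_\ell) \subseteq \mathrm{H}^2(k(X),\mu_\ell)$. First, the Tate-twist identity $\mu_\ell^{\vee}(2) \cong \mu_\ell$ means that a degree-$2$ element of $\mathrm{H}_{\mu_\ell^\vee}$ over a field $F$ is exactly an element of $\mathrm{H}^2(F,\mu_\ell)$. Combining the theorem ${\rm Inv}(-,\mathrm{M}) = A^0(-,\mathrm{M})$ for smooth schemes with the Bloch--Ogus remark identifying $A^0(X,\mathrm{H}_D)$ with $\mathrm{H}^0_{\mathrm{Zar}}(X, \mathcal{H}^\bullet(-,D))$, the left-hand side becomes $\mathrm{H}^0_{\mathrm{Zar}}(X,\mathcal{H}^2_{\text{\'et}}(-,\mu_\ell))$, which by definition is the unramified subgroup $\mathrm{H}^2_{\mathrm{nr}}(X,\mu_\ell)$ of $\mathrm{H}^2(k(X),\mu_\ell)$.

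Second, I would identify $\mathrm{H}^2_{\mathrm{nr}}(X,\mu_\ell)$ with $\mathrm{Br}'(X)_\ell$. Since $k(X)$ is a field, $\mathrm{Pic}(k(X)) = 0$, and the Kummer exact sequence yields $\mathrm{H}^2(k(X),\mu_\ell) = \mathrm{Br}(k(X))_\ell$. By Auslander--Goldman/Gabber purity for Brauer groups on smooth schemes over $k$, the restriction $\mathrm{Br}'(X) \hookrightarrow \mathrm{Br}(k(X))$ is injective, with image equal to the subgroup of classes unramified at every codimension-one point of $X$. Since the Galois-cohomological and Brauer-theoretic residue maps agree (both are computed by the tame symbol in Milnor $\mathrm{K}$-theory, as encoded by the cycle-module axioms), taking $\ell$-torsion identifies $\mathrm{Br}'(X)_\ell$ with $\mathrm{H}^2_{\mathrm{nr}}(X,\mu_\ell)$.

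To pass from smooth schemes to the quasi-separated smooth algebraic space of the statement, I would use \'etale descent. The space $X$ admits an \'etale atlas by a smooth scheme $U$, and both sides of the claimed equality are compatible with such covers: the left-hand side by Theorem \ref{thm:M sheaf} (smooth-Nisnevich covers include \'etale ones), and the right-hand side because the purity/Gersten input is an \'etale-local statement. The main obstacle is precisely this last point: marshalling the Gersten/purity input for Brauer groups in the algebraic-space setting. This is essentially formal once the scheme case is in hand, but requires some care; everything else is formal from the preceding results of the section.
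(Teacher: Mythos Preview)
The paper's own proof is simply a citation to \cite{PirAlgStack}*{Lem.~7.6}, so there is no in-paper argument to compare against; your sketch is essentially the standard argument one expects to find behind that citation. The identification $\mu_\ell^{\vee}(2)\cong\mu_\ell$, the passage to $A^0(X,\mathrm{H}_{\mu_\ell})=\mathrm{H}^2_{\mathrm{nr}}(X,\mu_\ell)$ via Bloch--Ogus, and then the identification with $\mathrm{Br}'(X)_\ell$ via Gabber/Auslander--Goldman purity are all correct for smooth schemes.

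There is, however, a concrete error in your reduction to the scheme case. You write that ``smooth-Nisnevich covers include \'etale ones'', but the inclusion goes the other way: every smooth-Nisnevich cover is in particular \'etale (after restricting suitably), while a general \'etale cover need \emph{not} be smooth-Nisnevich, since the defining lifting property for points over the same residue field can fail. So Theorem~\ref{thm:M sheaf} does not by itself let you descend along an arbitrary \'etale atlas $U\to X$. A cleaner way to handle the passage to algebraic spaces---and the one the paper itself uses a few lines later in the proof of Lemma~\ref{lm:H1}---is to invoke \cite{StPr}*{Tag 0ADD}: a smooth quasi-separated algebraic space agrees with a scheme outside a closed subset of codimension $\geq 2$. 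Since both $\mathrm{Inv}^2(-,\mathrm{H}_{\mu_\ell^\vee})$ and $\mathrm{Br}'(-)_\ell$ are unchanged by removing such subsets (Corollary~\ref{cor:hom inv} and purity for $\mathrm{Br}'$, respectively), this reduces you directly to the scheme case with no descent argument needed.
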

\begin{proof}
This is the content of \cite{PirAlgStack}*{Lem. 7.6}.
\end{proof}

The proof of this statement does not extend to algebraic stacks, but as for cohomological invariants, any theory that is invariant by the operations of removing closed subsets of high codimension and of passing to vector bundles admits an equivariant extension to quotient stacks. The following Lemma tells us exactly that.

\begin{lm}
Let $\Xcal$ be an algebraic stack smooth over $k$. Then
\begin{itemize}
\item If $\mathcal{V} \rightarrow \Xcal$ is a vector bundle we have $\mathrm{Br}'(\Xcal)_\ell=\mathrm{Br}'(\mathcal{V})_\ell$.
\item If $\mathcal{U} \subset \Xcal$ is an open  subset whose complement has codimension $\geq 2$ we have $\mathrm{Br}'(\Xcal)_\ell=\mathrm{Br}'(\mathcal{U})_\ell$. 
\end{itemize}
\end{lm}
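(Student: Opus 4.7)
Since $\ell$ is invertible on $\Xcal$, the Kummer sequence already invoked in this subsection identifies $\mathrm{Br}'(-)_\ell$ with $\H^2_{\textnormal{lis-\'et}}(-,\mu_\ell)$, so it suffices to show that the pullbacks $\H^2_{\textnormal{lis-\'et}}(\Xcal,\mu_\ell)\to\H^2_{\textnormal{lis-\'et}}(\mathcal{V},\mu_\ell)$ and $\H^2_{\textnormal{lis-\'et}}(\Xcal,\mu_\ell)\to\H^2_{\textnormal{lis-\'et}}(\mathcal{U},\mu_\ell)$ are isomorphisms; both $\mathcal{V}$ and $\mathcal{U}$ inherit smoothness from $\Xcal$, so the target makes sense. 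The plan is to pick a smooth atlas $X\to\Xcal$ by an algebraic space, form the simplicial algebraic space $X^{\bullet}$ with $X^n=X\times_{\Xcal}\cdots\times_{\Xcal} X$, and compare, on $\Xcal$ and on $\mathcal{V}$ or $\mathcal{U}$, the cohomological descent spectral sequences
$$E_1^{p,q}=\H^q_{\textnormal{\'et}}(X^p,\mu_\ell)\Longrightarrow\H^{p+q}_{\textnormal{lis-\'et}}(\Xcal,\mu_\ell).$$
Since $\mathcal{V}\to\Xcal$ and $\mathcal{U}\hookrightarrow\Xcal$ are representable, base change produces a simplicial vector bundle $V^{\bullet}\to X^{\bullet}$ and a simplicial open immersion $U^{\bullet}\hookrightarrow X^{\bullet}$ whose complement still has codimension $\geq 2$ (flatness of the atlas preserves codimension); both serve as smooth atlases of $\mathcal{V}$ and $\mathcal{U}$, yielding a map of spectral sequences at the $E_1$-level induced by pullback.

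In the vector bundle case, homotopy invariance of \'etale cohomology with torsion coefficients prime to $\mathrm{char}(k)$ gives $\H^q_{\textnormal{\'et}}(X^p,\mu_\ell)\cong\H^q_{\textnormal{\'et}}(V^p,\mu_\ell)$ for every $(p,q)$, so the map of spectral sequences is an isomorphism at $E_1$ and hence on abutments. In the open immersion case, absolute cohomological purity gives $\H^i_{Z^p}(X^p,\mu_\ell)=0$ for $i<2c$ with $c\geq 2$, so the localization sequence yields $\H^q_{\textnormal{\'et}}(X^p,\mu_\ell)\cong\H^q_{\textnormal{\'et}}(U^p,\mu_\ell)$ for $q\leq 3$. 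A standard first-quadrant spectral sequence comparison argument then forces an isomorphism on $\H^n$ of the abutment for $n\leq 2$, covering the required case $n=2$.

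The main obstacle will be genuine bookkeeping: verifying that the two classical inputs (affine bundle invariance and absolute purity for $\mu_\ell$) apply in the algebraic space setting rather than just for schemes, and confirming that lisse-\'etale cohomology of a smooth algebraic stack is computed by the simplicial \'etale cohomology of a smooth atlas. All of this is known (Artin, Milne, Olsson), but the references are somewhat scattered. Conceptually the Lemma mirrors Corollary \ref{cor:hom inv} for cohomological invariants, embodying the heuristic that any cohomology theory for smooth algebraic spaces satisfying purity and homotopy invariance automatically extends to smooth algebraic stacks with the same stability properties via a simplicial atlas presentation.
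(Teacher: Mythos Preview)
The paper itself does not prove this lemma; it simply cites \cite{FriPirBrauer}*{Prop.~1.3, 1.4}. Your descent strategy --- pull the statement back to a simplicial smooth algebraic space via a smooth atlas and invoke homotopy invariance (for the vector bundle) and semi-purity (for the open complement) levelwise --- is the right shape for an argument.

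There is, however, a genuine gap at the opening step. The Kummer sequence does \emph{not} identify $\mathrm{Br}'(-)_\ell$ with $\H^2_{\textnormal{lis-\'et}}(-,\mu_\ell)$: it only exhibits the former as the cokernel of $\mathrm{Pic}(-)/\ell\hookrightarrow\H^2_{\textnormal{lis-\'et}}(-,\mu_\ell)$. (The paper's own wording in this subsection is that $\mathrm{Br}'(\Xcal)_\ell$ is the \emph{image} of $\H^2(\Xcal,\mu_\ell)$, not all of it.) So an isomorphism on $\H^2(-,\mu_\ell)$ alone does not force an isomorphism on $\mathrm{Br}'(-)_\ell$. The repair is straightforward: your spectral-sequence comparison already delivers isomorphisms on $\H^i(-,\mu_\ell)$ for all $i\le 2$, and $\mathrm{Pic}$ is preserved under both operations (for a vector bundle via the zero section and homotopy invariance of $\mathrm{Pic}$; for removing codimension $\ge 2$ from a smooth stack, by descent from the Weil-divisor description on smooth algebraic spaces). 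Feeding these into the short exact sequence
\[
0 \longrightarrow \mathrm{Pic}(-)/\ell \longrightarrow \H^2_{\textnormal{lis-\'et}}(-,\mu_\ell) \longrightarrow \mathrm{Br}'(-)_\ell \longrightarrow 0
\]
then finishes the argument.

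A minor correction: with codimension exactly $2$, purity gives $\H^q_{\textnormal{\'et}}(X^p,\mu_\ell)\to\H^q_{\textnormal{\'et}}(U^p,\mu_\ell)$ an isomorphism only for $q\le 2$ (and an injection for $q=3$), not for $q\le 3$ as you wrote; but $q\le 2$ on the $E_1$-page is all that is needed to conclude the isomorphism on $\H^2$ of the abutment.
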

\begin{proof}
This is proven in \cite{FriPirBrauer}*{Prop. 1.3, 1.4}.
\end{proof}

Now we can use equivariant approximation to reduce the problem of computing the cohomological Brauer group to algebraic spaces.

\begin{prop}\label{prop:Inv to Br}
Let $X$ be an algebraic space, quasi-separated and smooth over $k$, being acted upon by an affine smooth group scheme $G/k$. Then
\[ {\rm Inv}^2(\left[ X/G\right],{\rm H}_{\mu_{\ell}^{\vee}}) = \mathrm{Br}'(\left[ X/G\right])_\ell. \]
\end{prop}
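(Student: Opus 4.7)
The plan is to use Totaro's equivariant approximation to reduce the proposition to the case of an algebraic space, where Lemma \ref{InvBrauer} already provides the identification.

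First, I would pick a finite-dimensional $k$-representation $V$ of $G$ together with a $G$-invariant open subset $U \subseteq V$ on which $G$ acts freely, and such that the closed complement $V \setminus U$ has codimension at least $2$. Such a pair $(V,U)$ exists for any affine smooth group scheme $G/k$ by the standard construction (embed $G$ into some $\mathrm{GL}_n$ and use a sufficiently large matrix representation with its open locus of injective $n$-tuples). Consequently, the diagonal action of $G$ on $X \times U$ is free, and since $X \times U$ is a quasi-separated smooth algebraic space, the quotient $Y \stackrel{\mathrm{def}}{=} [X\times U/G]$ is likewise a quasi-separated smooth algebraic space over $k$.

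Next, I would consider the chain of morphisms of smooth algebraic stacks
\[
[X/G] \;\xleftarrow{\;\pi\;}\; [X\times V/G] \;\xleftarrow{\;j\;}\; [X\times U/G]=Y,
\]
where $\pi$ is a vector bundle (the projection) and $j$ is an open immersion whose complement has codimension $\geq 2$. By Corollary \ref{cor:hom inv}, the pullback induces isomorphisms
\[
\mathrm{Inv}^2([X/G],\mathrm{H}_{\mu_\ell^\vee}) \;\xrightarrow{\sim}\; \mathrm{Inv}^2([X\times V/G],\mathrm{H}_{\mu_\ell^\vee}) \;\xrightarrow{\sim}\; \mathrm{Inv}^2(Y,\mathrm{H}_{\mu_\ell^\vee}),
\]
while by the Lemma preceding the proposition the analogous maps on the $\ell$-torsion of the cohomological Brauer group,
\[
\mathrm{Br}'([X/G])_\ell \;\xrightarrow{\sim}\; \mathrm{Br}'([X\times V/G])_\ell \;\xrightarrow{\sim}\; \mathrm{Br}'(Y)_\ell,
\]
are isomorphisms as well. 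Both chains are natural in the obvious sense (pullback of invariants, resp. of $\mu_\ell$-gerbes), so it suffices to identify the two terms on $Y$.

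Finally, since $Y$ is a quasi-separated smooth algebraic space over $k$, Lemma \ref{InvBrauer} gives
\[
\mathrm{Inv}^2(Y,\mathrm{H}_{\mu_\ell^\vee})=\mathrm{Br}'(Y)_\ell,
\]
and transporting this equality back along the pullback isomorphisms above yields the desired identification for $[X/G]$. The only real point of care is verifying that $Y$ is indeed a quasi-separated smooth algebraic space (so that Lemma \ref{InvBrauer} applies); this follows from the freeness of the $G$-action on $X\times U$ and descent, together with the assumptions on $X$ and $G$. Everything else is formal from the invariance properties already established.
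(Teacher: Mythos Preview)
Your proof is correct and follows essentially the same equivariant approximation argument as the paper: choose a representation $V$ with a free-action open $U$ of high codimension complement, observe that both $\mathrm{Inv}^2$ and $\mathrm{Br}'(-)_\ell$ are unchanged along the vector bundle $[X\times V/G]\to[X/G]$ and the open immersion $[X\times U/G]\hookrightarrow[X\times V/G]$, and then apply Lemma~\ref{InvBrauer} to the algebraic space $[X\times U/G]$. You have simply spelled out the two-step comparison and the quasi-separatedness check more explicitly than the paper does.
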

\begin{proof}
Pick a representation $V$ of $G$ such that the action is free on an open subset $U$ whose complement has high codimension. Such a representation can always be found, as any smooth affine algebraic group over $k$ admits an embedding into ${\rm GL}_n$. Then the equivariant approximation $\left[U \times X /G\right]$ has the same cohomological Brauer group and cohomological invariants as $\left[X/G\right]$. Lemma \ref{InvBrauer} allows us to conclude.
\end{proof}

Finally, the following Lemmas will be useful in detecting cyclic algebras among elements of the Brauer groups.

\begin{lm}\label{lm:H1}
Let $X/k$ be a smooth and separated algebraic space over $k$, being acted upon properly by an affine smooth group scheme $G/k$. Then
\[ {\rm Inv}^1(\left[ X/G \right], \K_{\ell}) = {\rm H}^1(\left[ X/G \right], \mu_\ell).\]
\end{lm}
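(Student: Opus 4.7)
The plan is to mimic the proof of Proposition \ref{prop:Inv to Br}, reducing to the case of a smooth algebraic space by equivariant approximation.

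First, for a smooth algebraic space $Y$ I would establish the identity ${\rm Inv}^1(Y, \K_\ell) = \H^1(Y, \mu_\ell)$. By the theorem preceding this subsection, ${\rm Inv}^1(Y, \K_\ell) = A^0(Y, \K_\ell)$, and since $\K_\ell^1(F) = \H^1(F, \mu_\ell)$ by Kummer theory, the remark above identifies $A^0(Y, \K_\ell)$ with the unramified cohomology $\H^0_{\rm Zar}(Y, \mathcal{H}^1(-, \mu_\ell))$. The Bloch-Ogus-Gabber spectral sequence $\H^p_{\rm Zar}(Y, \mathcal{H}^q(-, \mu_\ell)) \Rightarrow \H^{p+q}(Y, \mu_\ell)$ reduces in total degree $1$ to an exact sequence whose only additional term is $\H^1_{\rm Zar}(Y, \ZZ/\ell\ZZ)$, which vanishes on each connected component since the Zariski constant sheaf has no higher cohomology on an irreducible space. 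This gives the desired identification.

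Next I would reduce the stack case to the algebraic space case by equivariant approximation. Choose a representation $V$ of $G$ with a $G$-invariant open $U \subset V$ on which $G$ acts freely and whose complement has codimension at least $2$; this exists because $G$ embeds in some ${\rm GL}_n$. Since the $G$-action on $X$ is proper and the action on $U$ is free, the diagonal action on $X \times U$ is free and proper, so the quotient $Y := [(X \times U)/G]$ is a smooth algebraic space. Both ${\rm Inv}^1(-, \K_\ell)$ and $\H^1(-, \mu_\ell)$ are invariant under vector bundles and under removing closed substacks of codimension at least $2$: for cohomological invariants this is Corollary \ref{cor:hom inv}, and for $\H^1(-, \mu_\ell)$ this follows from the argument of \cite{FriPirBrauer}*{Prop. 1.3, 1.4} applied in degree $1$ (the proof is degree-agnostic, relying only on the Leray spectral sequence for an affine bundle together with purity for codimension-$\geq 2$ excision, which in degree $1$ gives an isomorphism as soon as the codimension is at least $2$). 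Interpolating through $[(X \times V)/G]$ yields the chain of equalities ${\rm Inv}^1([X/G], \K_\ell) = {\rm Inv}^1(Y, \K_\ell) = \H^1(Y, \mu_\ell) = \H^1([X/G], \mu_\ell)$.

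The principal obstacle is verifying that the Friedrich-Pirisi homotopy invariance arguments for $\mathrm{Br}'$ in the stacky setting transfer verbatim to $\H^1(-, \mu_\ell)$; the needed affine-bundle invariance and purity for lisse-étale cohomology of algebraic stacks with a proper $G$-action are standard but must be invoked carefully. A secondary concern is extending the Bloch-Ogus input from smooth schemes to smooth algebraic spaces, which one handles by passing to a smooth étale cover.
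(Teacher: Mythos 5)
Your proof is correct and follows essentially the same route as the paper's: equivariant approximation together with homotopy invariance and degree-one purity reduces the statement to a smooth scheme, where the Leray (Bloch--Ogus) spectral sequence for $X_{\textnormal{\'et}}\to X_{\textnormal{Zar}}$ identifies ${\rm Inv}^1(X,\K_{\ell})=A^0(X,\K_{\ell})$ with ${\rm H}^1(X,\mu_\ell)$ once the Zariski cohomology of the constant sheaf $\mu_\ell$ is seen to vanish. The only step you leave slightly open --- passing from a smooth separated algebraic space to a scheme rather than invoking Bloch--Ogus for algebraic spaces directly --- is settled in the paper by \cite{StPr}*{Tag 0ADD}, which lets one discard the non-schematic locus as a closed subset of codimension at least $2$.
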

\begin{proof}
Note that by purity we can remove closed subsets of codimension $\geq 2$ without affecting the first cohomology groups. By equivariant approximation it is then sufficient to prove the statement for a smooth separated algebraic space $X$. 

Moreover, by \cite{StPr}*{Tag 0ADD} we know that, up to a closed subset of codimension at most $2$, $X$ is a scheme, so we can reduce to a smooth scheme $X$.

Consider the morphism of sites \[(i_*, i^*): X_{\textnormal{\'et}} \to X_{\textnormal{Zar}}\] given by restriction and pullback. The composition with the global sections functor $\Gamma$ induces a Grothendieck spectral sequence
\[ {\rm H}^p_{\textnormal{Zar}}(X,{\rm R}^qi_*F) \Rightarrow {\rm H}^{p+q}_{\textnormal{\'et}}(X,F) .\]
Picking $F=\mu_\ell$, the low degree terms exact sequence reads
\[ 0 \rightarrow  {\rm H}^1_{\textnormal{Zar}}(X,\mu_\ell) \rightarrow {\rm H}^1_{\textnormal{\'et}}(X,\mu_\ell) \rightarrow {\rm Inv}^1(X, \K_{\ell}) \rightarrow {\rm H}^2_{\textnormal{Zar}}(X,\mu_\ell). \]
Now observe that on the small Zariski site of $X$, the sheaf $\mu_\ell$ is constant, and thus flasque. Then its Zariski cohomology is trivial and the result follows immediately.
\end{proof}

\begin{lm}\label{lm:cyclic}
Let $\Xcal$ be an algebraic stack over $k$. Any element of ${\rm Br}'(\Xcal)$ coming from the cup product ${\rm H}^1(X, \mu_\ell) \otimes {\rm H}^1(\Xcal, \ZZ/\ell\ZZ) \rightarrow {\rm H}^2(\Xcal, \mu_\ell)$ is represented by a cyclic algebra.
\end{lm}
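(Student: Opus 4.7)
The plan is to construct $\mathcal{A}_{\alpha,\beta}$ explicitly in the generality of an algebraic stack and then verify that its class in ${\rm Br}'(\Xcal)$ is the cup product. The construction is a direct globalisation of the classical cyclic algebra. The class $\alpha \in {\rm H}^1(\Xcal, \mu_\ell)$ corresponds to a pair $(L, \phi)$ where $L$ is a line bundle on $\Xcal$ together with a trivialisation $\phi \colon L^{\otimes \ell} \xrightarrow{\sim} \OO_{\Xcal}$, and the class $\beta \in {\rm H}^1(\Xcal, \ZZ/\ell\ZZ)$ corresponds to a $\ZZ/\ell\ZZ$-torsor, which can be interpreted as a locally free $\OO_{\Xcal}$-algebra $E$ of rank $\ell$, étale-locally isomorphic to $\OO_{\Xcal}^{\ell}$, equipped with a generator $\sigma$ of the cyclic Galois action. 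Define
\[
\mathcal{A}_{\alpha,\beta} = \bigoplus_{i=0}^{\ell-1} E \otimes_{\OO_{\Xcal}} L^{\otimes i}
\]
with multiplication twisted so that an element $y \in L$ satisfies $y \cdot e = \sigma(e) \cdot y$ for $e \in E$, and $y^{\otimes \ell}$ is identified with $1 \in E$ via $\phi$.

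First I would verify that $\mathcal{A}_{\alpha,\beta}$ is an Azumaya algebra on $\Xcal$. This is an étale-local statement: after passing to a smooth (indeed étale) cover trivialising both $L$ and $E$, the algebra becomes the standard presentation of the matrix algebra $M_\ell$ in terms of a diagonal matrix (the action of $\sigma$) and a cyclic permutation matrix (the element $y$). So $\mathcal{A}_{\alpha,\beta}$ is étale locally isomorphic to $\textnormal{End}(\OO_{\Xcal}^{\ell})$, hence defines a class in ${\rm Br}(\Xcal) \subseteq {\rm Br}'(\Xcal)$.

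Next I would identify the class in ${\rm H}^2(\Xcal, \mu_\ell)$. Because both the cup product and the class of $\mathcal{A}_{\alpha,\beta}$ are natural with respect to pullback, and because the cohomological Brauer group of $\Xcal$ injects into the product of the cohomological Brauer groups of the local rings at geometric points of a smooth cover (via the continuity/sheaf properties we have just established for $\Inv^2(-, {\rm H}_{\mu_\ell^{\vee}})$), it suffices to prove the equality over the spectrum of a field $F$. In this case $\alpha$ and $\beta$ are represented by an element $a \in F^*/F^{*\ell}$ and a character $\chi \colon \textnormal{Gal}(F) \to \ZZ/\ell\ZZ$, the algebra $\mathcal{A}_{\alpha,\beta}$ reduces to the classical cyclic algebra $(\chi, a)$, and the identification of its Brauer class with $\alpha \cdot \beta$ is the content of \cite{GMS}*{Cor. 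I.2.6} (or equivalently Serre, \emph{Local Fields}, Ch.\ XIV).

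The main obstacle is purely bookkeeping: making the multiplication on $\mathcal{A}_{\alpha,\beta}$ well-defined in the stacky setting (in particular, showing that the trivialisation $\phi$ descends the multiplication unambiguously to $\Xcal$), and checking that the reduction to the field-theoretic case via a smooth cover is compatible with both the construction of the algebra and the cup product, so that the classical identification transports to $\Xcal$. Once this is in place, the resulting class in ${\rm Br}(\Xcal)$ equals the image of $\alpha\cdot\beta$ in ${\rm Br}'(\Xcal)_\ell$, as required.
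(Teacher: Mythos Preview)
The paper gives no argument of its own here; it just cites \cite{AntMeiEll}*{2.10, 3.7}. Your construction of $\mathcal{A}_{\alpha,\beta}$ and the \'etale-local check that it is Azumaya are correct and are essentially what Antieau--Meier do.

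The gap is in your reduction to the field case. You assert that ${\rm Br}'(\Xcal)$ injects into the product of Brauer groups of local rings of a smooth cover, invoking the sheaf property of ${\rm Inv}^2(-, {\rm H}_{\mu_\ell^\vee})$; but the identification ${\rm Br}'(-)_\ell = {\rm Inv}^2(-, {\rm H}_{\mu_\ell^\vee})$ has only been proved in this paper for smooth quotient stacks (Proposition~\ref{prop:Inv to Br}), whereas the lemma is stated for an arbitrary algebraic stack. For general $\Xcal$ the map ${\rm H}^2(\Xcal, \mu_\ell) \to {\rm H}^2(U, \mu_\ell)$ along a smooth cover $U \to \Xcal$ need not be injective (take $\Xcal = {\rm B}G$, $U = \Spec(k)$), so ``equality over fields $\Rightarrow$ equality over $\Xcal$'' is not justified as written. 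The repair is to skip the reduction and compute the class of $\mathcal{A}_{\alpha,\beta}$ directly as a \v{C}ech $2$-cocycle on a cover trivialising both $L$ and the $\ZZ/\ell\ZZ$-torsor: the resulting $\mu_\ell$-valued cocycle is visibly the cup of the $1$-cocycles for $\alpha$ and $\beta$, and this computation (which is what Antieau--Meier carry out) is valid in any ringed topos. For the applications in this paper only smooth quotient stacks appear, and in that restricted setting your injectivity argument would in fact go through.
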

\begin{proof}
This is proven in \cite{AntMeiEll}*{2.10, 3.7}.
\end{proof}

\subsection{Chow groups with coefficients}\label{sec:Chow}

As our main tool for computations, we need to establish the main properties of the theory of Chow groups with coefficients. Rost's original paper \cite{Rost} develops the theory for a general cycle module, but it notably lacks a theory of Chern classes. This is developed for a general cycle module in \cite{PirCohHypEven}*{2.3-2.5}.

A cycle with coefficients $\alpha \in C_{i}(X,{\rm M})$ is a formal finite sum of elements in the form $(V, \tau)$, where $V$ is an irreducible subscheme of $X$ of dimension $i$ and $\tau \in {\rm M}^{\bullet}(k(V))$. The groups $C_i(X,{\rm M})$ form a complex 
\[0 \rightarrow C_{{\rm dim}(X)}(X,{\rm M}) \rightarrow \ldots \rightarrow  C_1(X,{\rm M}) \rightarrow C_0(X,{\rm M}) \rightarrow 0 \]
If $V$ is a normal irreducible subscheme, the differential ${\rm d}$ is defined on an element $(V,\tau)$ by
\[{\rm d}(V,\tau) = \oplus_{p \in V^{(1)}} (\overline{p},  \partial_{v_p} \tau)\] where the sum runs over points of codimension $1$ and the valuation $v_p$ is defined by the DVR $\mathcal{O}_{V,p}$. There is some extra subtlety involved in the general definition when the subscheme $V$ is not be normal.

The axioms of cycle modules ensure that the sum is finite and that ${\rm d}\circ {\rm d}=0$. The groups $A_i(X,{\rm M})$ are defined as the homology of this complex.

When $X$ is equidimensional, which will always be the case in the following, we can define $C^i(X,\M)$ as the group of cycles of codimension $i$, and switch to the codimension notation
 
\[ A^i(X,\M)=A_{{\rm dim}(X)-i}(X,\M) \]

which is better suited for our purposes. We denote \[A^*(X,\M)= \bigoplus_{i=0}^{{\rm dim}(X)} A^i(X,\M).\] The group $A^*(X,\M)$ has two different gradations, one given by codimension and one by the gradation on $\M$. We will always refer to the former by codimension and the latter by degree.
 
We recall the main properties we are interested in:

\begin{prop}\label{prop:properties}
	Let $X$ and $Y$ be equidimensional, quasi-projective schemes of finite type over $k$. Let $\M$ and $\N$ be $\ell$-torsion cycle modules. Then we have:
	\begin{enumerate}		
		\item \emph{Proper pushforward}: every proper morphism $f:X\to Y$ induces a homomorphism of groups
		\[f_*:A_i(X,\M)\longrightarrow A_i(Y,\M)\]
		which preserves the cohomological degree.
		
		\item \emph{Flat pullback}:  every flat morphism $f:X\to Y$ of relative constant dimension induces a homomorphism of groups
		\[f^*:A^i(Y,\M)\longrightarrow A^i(X,\M)\]
		which preserves the cohomological degree.
		
		\item \emph{Localization exact sequence}: given a closed subscheme $Z\xhookrightarrow{i} X$ whose open complement is $U\xhookrightarrow{j} X$, there exists a long exact sequence
		\[\cdots\to A_i(X,\M) \xrightarrow{j^*} A_i(U,\M) \xrightarrow{\del} A_{i-1}(Z,\M) \xrightarrow{i_*} A_{i-1}(X,\M) \to \cdots \]
		The boundary homomorphism $\del$ has cohomological degree $-1$, whereas the other homomorphisms have cohomological degree zero.
		
		\item \emph{Compatibility}: given a cartesian square of schemes
		\[ \xymatrix{
			Y \ar[r]^i \ar[d] & X \ar[d] \\
			Y' \ar[r]^{i'} & X' }\]
		where the vertical morphisms are closed emebeddings and the horizontal ones are proper, we get a commutative square
		\[ \xymatrix{
			A_k(Y'\setminus Y,\M) \ar[r]^{i''_*} \ar[d]^{\del} & A_k(X'\setminus X,\M) \ar[d]^{\del} \\
			A_{k-1}(Y,\M) \ar[r]^{i_*} & A_{k-1}(X,\M) } \]
			where $i''$ is the restriction of $i'$ to $Y'\setminus Y$.
		
		\item \emph{Homotopy invariance}: if $\pi:E\to X$ is a finite rank vector bundle, then we have an isomorphism
		\[\pi^*: A^i(X,\M)\simeq A^i(E,\M)\]
		which preserves the cohomological degrees.
		
		\item \emph{Pullback along regular embeddings}: every regular embedding $i:X\to Y$ induces a pullback morphism
		\[ i^*:A^i(Y,\M)\longrightarrow A^i(X,\M) \]
		which satisfies the usual functorial properties.
		
		\item \emph{Pullback from smooth targets}: every morphism $f:X\to Y$ with smooth target $Y$ induces a pullback morphism
		\[ f^*:A^i(Y,\M)\longrightarrow A^i(X,\M) \]
		which satisfies the usual functorial properties. Whenever $f$ is flat of relative constant dimension, this pullback coincides with the flat pullback introduced before.
		
		\item \emph{Ring structure}: if $X$ is smooth and $\M$ is a cycle module with a pairing, then $A^{*}(X,\M)$ inherits the structure of a graded-commutative ring, where the graded-commutativity should be understood in the following sense: if $\alpha$ has codimension $i$ and degree $d$, and $\beta$ has codimension $j$ and degree $e$, then $\alpha\cdot\beta=(-1)^{de}\beta\cdot\alpha$, and the product has codimension $i+j$ and degree $d+e$.
		
		\item \emph{Module structure}: If $Y$ is smooth and we have pairings $\N\times \N\to \N$ and $\N\times \M\to \M$, then $A^{*}(Y,\M)$ inherits the structure of an $A^{*}(Y,\N)$-module.
		
		For every morphism $f:X\to Y$, every $\alpha$ in $A^{*}(Y,\N)$ and $\beta$ in $A^{*}(X,\M)$ we have:
		\[ f^*(\alpha\cdot\beta)=f^*\alpha\cdot f^*\beta \]
		
		For every proper morphism $f:X\to Y$,  the following projection formula holds:
		\[ f_*(\alpha\cdot f^*\beta)=f_*\alpha \cdot \beta \]
		
		\item \emph{Chern classes}: For a vector bundle $E\to X$ of finite rank $r$ we have well defined Chern class homomorphisms
		\[ c_k(E)(-):A^i(X,\M) \longrightarrow A^{i+k}(X,\M) \]
		which satisfy the usual standard properties.
		
		In particular, if $i:X\to E$ denotes the zero-section embedding, we have
		\[ i^*i_*(\alpha)=c_r(E)(\alpha) \]
		
		\item \emph{Projective bundle formula}: For a projective bundle $\pi:P(E)\to X$ whose fibres have dimension $r$, we have:
		\[ A^{i}(P(E),\M)\simeq \mathop{\oplus}_{j=0}^{i} c_1(\OO(1))^j\left( \pi^*A^{i-j}(X,\M) \right) \]
		for $0\leq i\leq r$. Moreover we have:
		\[ c_1(\OO(1))^{r+1}(-) = - \sum_{i=0}^{r} c_1(\OO(1))^i\left( \pi^*c_{r+1-i}(E)(-)\right)\]
	\end{enumerate}
\end{prop}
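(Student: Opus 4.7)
The bulk of these properties are not original to this paper: they are established in Rost's foundational work \cite{Rost} for a general cycle module, with the Chern-class machinery added in \cite{PirCohHypEven}*{Sec. 2.3--2.5}. My plan is therefore to assemble the right references rather than to reprove anything from scratch, checking only that the $\ell$-torsion hypothesis on $\M$ and $\N$ does not disturb any of the constructions (which it does not, since $\ell$-torsion is preserved by all four cycle-module operations $\phi_*,\phi^*,\partial_v,s_v^{\pi}$ and hence by the induced maps on $A^*(X,\M)$).

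First I would dispatch items (1)--(4) directly: proper pushforward is \cite{Rost}*{Sec. 4, Thm. 4.1 and Prop. 4.6(1)}, flat pullback of constant relative dimension is \cite{Rost}*{Sec. 12, 12.1}, the localization long exact sequence together with the degree shifts of its maps is \cite{Rost}*{Prop. 8.6 and Thm. 6.1}, and compatibility of proper pushforward with the boundary is a direct consequence of the double-complex identity $d f_* = f_* d$ from \cite{Rost}*{Prop. 4.6(1)}. For (5), homotopy invariance under pullback along a vector bundle is \cite{Rost}*{Prop. 8.6 and Sec. 9}. For (6) and (7) I would invoke the deformation-to-the-normal-cone construction of \cite{Rost}*{Sec. 11--12}: a regular embedding produces a pullback via specialization to the normal bundle followed by the homotopy inverse of (5), and for a general morphism to a smooth $Y$ one factors the graph as a regular embedding into $X\times Y$ composed with a flat projection, and then checks functoriality as in \emph{Loc. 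Cit.}

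For (8) and (9), the ring structure on $A^*(X,\M)$ when $X$ is smooth and $\M$ carries a pairing is constructed from the cross product and diagonal pullback exactly as in \cite{Rost}*{Sec. 14} (see also \cite{PirCohHypEven}*{Sec. 2.4}); the projection formula and compatibility with pullback are built into the construction. The module structure on $A^*(Y,\M)$ over $A^*(Y,\N)$ is obtained identically, given the external pairing $\N\times\M\to\M$. Finally, items (10) and (11) are the part not originally in Rost: Chern classes are defined in \cite{PirCohHypEven}*{Sec. 2.5} by the standard procedure of pulling back along the zero section of $E$ and using the projective bundle formula as a definition, and their elementary properties (Whitney sum, naturality, self-intersection formula $i^*i_*=c_r(E)$) follow as in the classical case. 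The projective bundle formula itself is proved by induction on the rank, trivializing $E$ up to a closed subset of high codimension and applying (3), (5) and homotopy invariance to identify $A^*(P(E),\M)$ with the asserted direct sum.

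The only step with genuine content to verify is (7), since a map into a smooth target is not flat in general, and this is exactly where the deformation-to-the-normal-cone argument is needed; everything else is either a direct citation or a formal consequence. I would therefore spend the actual writing budget on spelling out the definition of the pullback in (7) and its independence of the factorization, following \cite{Rost}*{Sec. 12}, and simply cite the remaining items.
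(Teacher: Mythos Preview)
Your approach is essentially the same as the paper's: the proposition is a compendium of known results, and the proof is a list of citations to \cite{Rost} and \cite{PirCohHypEven}. The paper's own proof is in fact even more terse than your proposal --- it is a one-sentence ``Sketch of proof'' that simply assigns references (Rost Sec.~4 for (1)--(4), Prop.~8.6 for (5), Prop.~12.3 for (6), Thm.~12.1 and Prop.~12.2 for (7), Thm.~14.6 for (8)--(9), and \cite{PirCohHypEven}*{Sec.~2.1} for (10)--(11)) and does not spell out the deformation-to-the-normal-cone argument for (7) as you propose to do. A few of your pinpoint citations differ from the paper's (e.g.\ you attribute the localization sequence to Prop.~8.6, which is homotopy invariance; and the Chern-class material in \cite{PirCohHypEven} is in Sec.~2.1 rather than 2.3--2.5), but these are bookkeeping discrepancies, not mathematical ones.
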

\begin{proof}[Sketch of proof]
	The proofs of (1)-(4) are the content of \cite{Rost}*{Sec. 4}, (5) is \cite{Rost}*{Prop. 8.6}, (6) is \cite{Rost}*{Prop. 12.3}, (7) is \cite{Rost}*{Thm. 12.1 and Prop. 12.2}, (8) and (9) are \cite{Rost}*{Thm. 14.6}. The theory behind (10) and (11) is developed in \cite{PirCohHypEven}*{Sec. 2.1}
\end{proof}
\begin{prop}\label{prop:chern}
Let $E$ be a vector bundle over a smooth variety $X$, and let $\M$ be a cycle module of $\ell$-torsion. Let $\alpha$ be an element of $A^{*}(X,\M)$. 

The class $c_i(E)(\alpha) \in A^{*}(X,\M)$ is equal to the image of $c_i(E)(1) \otimes \alpha$ through the multiplication map $A^{*}(X,\H_{\ZZ/\ell\ZZ}) \otimes A^{*}(X,\M) \rightarrow A^{*}(X,\M)$.
\end{prop}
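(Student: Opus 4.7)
The plan is to prove this in two stages: first establish the formula when $E$ is a line bundle using the self-intersection (zero-section) formula combined with the projection formula, then bootstrap to arbitrary rank using the defining relation of higher Chern classes coming from the projective bundle formula.

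For a line bundle $L$ on $X$, let $i:X\to L$ be the zero section and $\pi:L\to X$ the projection. By item (10) of Proposition \ref{prop:properties} we have $c_1(L)(\alpha)=i^*i_*(\alpha)$. Apply the projection formula from item (9), with $f=i$, taking $1\in A^0(X,\H_{\ZZ/\ell\ZZ})$ in the ring argument and $\pi^*\alpha\in A^*(L,\M)$ in the module argument; since $i^*\pi^*=\mathrm{id}$, we obtain the identity
\[i_*(\alpha)=i_*(1)\cdot \pi^*\alpha \in A^*(L,\M).\]
Pulling back by $i^*$ and using the multiplicativity of pullback with respect to the module action then yields $c_1(L)(\alpha)=c_1(L)(1)\cdot \alpha$. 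A straightforward induction on $j$, reapplying this identity each time, extends this to $c_1(L)^{j}(\alpha)=c_1(L)^{j}(1)\cdot \alpha$ for all $j\geq 0$.

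For a vector bundle $E$ of rank $r+1$, consider $\pi: P(E)\to X$ and apply the defining Cayley--Hamilton-type relation (11) both to $\pi^*\alpha$ and to $1$:
\[c_1(\OO(1))^{r+1}(\pi^*\alpha)=-\sum_{i=0}^{r}c_1(\OO(1))^{i}\bigl(\pi^*(c_{r+1-i}(E)(\alpha))\bigr),\]
\[c_1(\OO(1))^{r+1}(1)=-\sum_{i=0}^{r}c_1(\OO(1))^{i}(1)\cdot\pi^*(c_{r+1-i}(E)(1)).\]
Multiplying the second equation by $\pi^*\alpha$ and rewriting the first equation via the line bundle case (applied to $\OO(1)$) turns both sides into explicit expansions of the form $\sum_i c_1(\OO(1))^i(1)\cdot\pi^*(\xi_i)$ with $\xi_i\in A^*(X,\M)$. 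The projective bundle decomposition in item (11) is a direct sum decomposition, and $\pi^*$ is injective on each summand, so the coefficients $\xi_i$ are uniquely determined; comparing the two expressions forces $c_{r+1-i}(E)(\alpha)=c_{r+1-i}(E)(1)\cdot\alpha$ for $i=0,\ldots,r$, which covers all nontrivial Chern classes of $E$.

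The main obstacle is the line bundle step, since it is the only place where one genuinely mixes the $\K_\ell$-module structure on $\M$ with the pushforward on cycles with coefficients; once the projection formula is applied in the right variables, the rest is essentially formal and the projective bundle formula does the bookkeeping needed to propagate the result to higher rank. A minor point to verify is that the $\K_\M$-module structure on $\M$ factors through $\H_{\ZZ/\ell\ZZ}=\K_\M/\ell$ thanks to the $\ell$-torsion hypothesis, so that the product $c_1(L)(1)\cdot\alpha$ is well defined in $A^*(X,\M)$.
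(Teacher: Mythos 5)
Your proof is correct, and the line bundle step is essentially the paper's own argument: both of you reduce $c_1(L)(\alpha)=i^*i_*\alpha$ to the projection formula for the zero section, using $i^*=(\pi^*)^{-1}$ — you derive $i_*\alpha=i_*(1)\cdot\pi^*\alpha$ directly, while the paper checks the equivalent identity after pulling everything back along $\pi$; the difference is purely organizational. Where you genuinely diverge is the passage to higher rank. The paper dispatches it in one sentence by invoking the splitting principle and the Whitney summation formula, which presupposes that these are available for coefficients in a cycle module without ring structure (they are, via the references given for items (10)--(11)). You instead return to the Grothendieck defining relation for the $c_i(E)$ on $P(E)$, expand both $c_1(\OO(1))^{r+1}(\pi^*\alpha)$ and $c_1(\OO(1))^{r+1}(1)\cdot\pi^*\alpha$ in the form $\sum_i c_1(\OO(1))^i(1)\cdot\pi^*(\xi_i)$, and compare coefficients. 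This is more self-contained — it uses only the line bundle case and the projective bundle formula, both already on the table — at the cost of a little more bookkeeping. One point worth making explicit in your coefficient comparison: the elements you compare live in codimension $\deg\alpha+r+1>r$, so you need the direct-sum decomposition of $A^{*}(P(E),\M)$ in all codimensions, not just the range $0\leq i\leq r$ displayed in Proposition \ref{prop:properties}(11); this full form of the projective bundle theorem is exactly what makes Grothendieck's definition of the $c_i$ well posed, so it is harmless, but it should be cited in that form.
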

\begin{proof}
Let $\pi:E\to X$ be a line bundle with zero section $i:X\to E$, so that $c_1(E)(\alpha)=i^*i_*\alpha$. 
We claim that 
\begin{equation}\label{eq:chern formula} 
i^*i_*(\alpha\cdot\beta)=i^*i_*\alpha\cdot \beta
\end{equation}
where $\alpha$ is an element of $A^{*}(X,\H_{\ZZ/\ell\ZZ})$ and $\beta$ is an element of $A^{*}(X,\M)$. Observe that $i^*=(\pi^*)^{-1}$: we will show that the pullback along $\pi$ of the two sides of (\ref{eq:chern formula}) coincide.
The pullback of the right hand hand side is:
\[ \pi^*(i^*i_*\alpha\cdot\beta)=\pi^*i^*i_*\alpha\cdot \pi^*\beta=i_*\alpha\cdot\pi^*\beta \]
The pullback of the left hand side is:
\[ \pi^*i^*i_*(\alpha\cdot\beta)=i_*(\alpha\cdot\beta)=i_*(\alpha\cdot i^*\pi^*\beta)=i_*\alpha\cdot\pi^*\beta \]
where in the last equality we have used the projection formula (see Proposition \ref{prop:properties}.(9)).

By taking $\alpha=1$ we obtain a proof of the Proposition in the case of line bundles. The general case can be deduced from this one by applying the splitting principle and the Whitney summation formula.
\end{proof}

\begin{lm}\label{lm:Gm torsor}
Let $X$ be a scheme and write $\Gm=\Spec(k\left[ t, t^{\operatorname{-1}}\right])$. Let $\M$ be an $\ell$-torsion cycle module. Then
\[A^{*}(X \times \Gm,\M) = A^{*}(X,\M) \oplus \lbrace t \rbrace A^{*}(X,\M),\]
where $\lbrace t \rbrace$ is seen as an element of $\H^{1}(k(t),\mu_\ell)\simeq k(t)^*/(k(t)^{*})^{\ell}$.
\end{lm}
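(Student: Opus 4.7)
The plan is to run the localization exact sequence for the complementary pair $X\times\{0\}\hookrightarrow X\times\bA^1\hookleftarrow X\times\Gm$ and then construct a splitting via multiplication by $\{t\}$.

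First, apply part (3) of Proposition \ref{prop:properties} to the closed embedding $i:X\simeq X\times\{0\}\hookrightarrow X\times\bA^1$, with open complement $j:X\times\Gm\hookrightarrow X\times\bA^1$. Since $X\times\{0\}$ has codimension one in $X\times\bA^1$, the boundary preserves the codimension gradation, so we obtain, for each $p$,
\[ \cdots\to A^{p-1}(X,\M)\xrightarrow{i_*}A^p(X\times\bA^1,\M)\xrightarrow{j^*}A^p(X\times\Gm,\M)\xrightarrow{\partial}A^p(X,\M)\xrightarrow{i_*}A^{p+1}(X\times\bA^1,\M)\to\cdots\]
By homotopy invariance (part (5)) the projection $\pi:X\times\bA^1\to X$ induces an isomorphism $\pi^*:A^*(X,\M)\xrightarrow{\sim} A^*(X\times\bA^1,\M)$, whose inverse is $i^*$. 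Combining the relation $i^*i_*(-)=c_1(N_{X/X\times\bA^1})(-)$ from part (10) with the fact that the normal bundle is trivial (so $c_1=0$) gives $i^*\circ i_*=0$; since $i^*$ is an isomorphism this forces $i_*=0$. The long exact sequence therefore breaks into a short exact sequence
\[ 0\to A^p(X,\M)\xrightarrow{\pi^*}A^p(X\times\Gm,\M)\xrightarrow{\partial}A^p(X,\M)\to 0,\]
where the first map is still denoted $\pi^*$ (identifying it with $j^*\circ\pi^*$).

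Next, construct a splitting. The element $t\in k(t)^*$ has trivial valuation at every closed point of $\Gm$, hence $\{t\}\in\K_\ell^1(k(t))$ lies in $A^0(\Gm,\K_\ell)$. Since $\M$ is $\ell$-torsion it is a module over $\H_{\ZZ/\ell\ZZ}=\K_\ell$, so by part (9) we may define
\[ \sigma:A^p(X,\M)\longrightarrow A^p(X\times\Gm,\M),\qquad \sigma(\alpha)=\{t\}\cdot\pi^*\alpha.\]
I then need to verify that $\partial\circ\sigma=\mathrm{id}$. The point is that the closure of $X\times\Gm$ inside $X\times\bA^1$ meets the divisor $X\times\{0\}$ transversally along $X$, with uniformizer $t$; by the very definition of the specialization map $s_v^\pi$ recalled in the excerpt, and since $\pi^*\alpha$ is unramified along $X\times\{0\}$, the boundary of $\{t\}\cdot\pi^*\alpha$ along this divisor computes to $s_v^t(\pi^*\alpha)=\alpha$. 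This gives the splitting and hence the direct sum decomposition stated in the Lemma.

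The main obstacle is the last computation, namely the compatibility between the boundary operator $\partial$ in the localization sequence of Chow groups with coefficients and the specialization $s_v^t$ at the level of cycle modules applied to $\{t\}\cdot\pi^*\alpha$. This is essentially a Leibniz-type formula: one reduces to a generic cycle $(V,\tau)$ on $X\times\Gm$, passes to its closure in $X\times\bA^1$, and uses the cycle module axioms (in particular rule R3 of \cite{Rost}*{Sec. 1}) to commute $\partial$ past $\{t\}$. Once this formula is in hand the splitting is immediate, and combined with the short exact sequence above it yields
\[ A^*(X\times\Gm,\M)=\pi^*A^*(X,\M)\oplus\{t\}\cdot\pi^*A^*(X,\M),\]
which is the claimed identification.
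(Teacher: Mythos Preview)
Your proof is correct and follows essentially the same approach as the paper's own argument: both run the localization sequence for $X\times\{0\}\hookrightarrow X\times\bA^1$, use homotopy invariance together with the vanishing of the relevant first Chern class to kill the pushforward, and then split the resulting short exact sequence by multiplying by $\{t\}\in A^0(X\times\Gm,\K_\ell)$. If anything, you are more explicit than the paper about why $\partial\circ\sigma=\mathrm{id}$, which the paper simply declares ``easy to check''.
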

\begin{proof}
Consider the localization long exact sequence induced by the open embedding $X\times\Gm\subset X\times\bA^1$, which is:
\[ \cdot\to A^i(X\times\bA^1,\M) \to A^i(X\times\Gm,\M)\to A^i(X\times\{0\},\M)\to A^{i+1}(X\times\bA^1, \M)\to\cdots \]
We have $A^i(X\times\bA^1,\M)\simeq A^i(X\times\{0\},\M)\simeq A^i(X,\M)$. With this identification, the last morphism in the sequence above coincides with multiplication by $c_1(\OO_X)=0$, hence for every $i$ we have:
\[ 0\to A^i(X,\M) \to A^i(X\times\Gm,\M) \to A^i(X,\M)\to 0  \]
Let $t$ be the element in $\Inv(X,\H_{\ZZ/\ell\ZZ})$ which sends a morphism $\Spec(F)\to X\times\Gm$ to the equivalence class in $\H^1(F,\mu_\ell)\simeq F^*/(F^*)^\ell$ of the element in $F^*$ defined by $\Spec(F)\to X\times\Gm \to \Gm$.

We can use the invariant $t$, regarded as an element of $A^0(X\times\Gm,\H_{\ZZ/\ell\ZZ})$ to define a morphism $i:A^i(X,\M)\to A^i(X\times\Gm,\M)$ by setting $i(\gamma)=t\cdot\pr_1^*\gamma$ (we are using here the fact that $A^{*}(X\times\Gm,\M)$ is an $A^{*}(X\times\Gm,\H_{\ZZ/\ell\ZZ})$-module).

It is easy to check that $i$ provides a splitting for the short exact sequence above, thus concluding the proof.
\end{proof}

\begin{prop}
Let $\M$ be an $\ell$-torsion cycle module. If $f:X\to Y$ is a universal homeomorphism, it induces an isomorphism $f_*: A^{*}(X,\M) \xrightarrow{\simeq} A^{*}(Y,\M)$.
\end{prop}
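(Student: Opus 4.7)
The plan is to show that $f_{*}$ is already an isomorphism at the level of the cycle complex $C_{*}(-,\M)$, which then passes immediately to the homology groups $A^{*}(-,\M)$.

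First I would reduce to the case where both $X$ and $Y$ are reduced. The closed immersion $i_X\colon X\red\hookrightarrow X$ is itself a universal homeomorphism, and since $C_i(X,\M)=\bigoplus_{x\in X_{(i)}}\M^{\bullet}(\kappa(x))$ depends only on the points of $X$ and their residue fields, which agree with those of $X\red$, the map $(i_X)_{*}\colon C_{*}(X\red,\M)\to C_{*}(X,\M)$ is tautologically an isomorphism of complexes; the differentials match because, for any integral closed subscheme $V$, the local rings $\OO_{V,p}$ used in the boundary map depend only on $V\red$. Writing $f\circ i_X = i_Y\circ g$ for the induced universal homeomorphism $g\colon X\red\to Y\red$, functoriality of proper pushforward reduces the claim to $g_{*}$ being an isomorphism.

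Next, assuming $X,Y$ reduced, recall that a universal homeomorphism is integral and universally bijective, hence, being of finite type, finite. In particular $f$ induces a dimension-preserving bijection between integral closed subschemes of $X$ and of $Y$. For an integral $V\subseteq X$ of dimension $i$ mapping to $W=f(V)\subseteq Y$, the restriction $V\to W$ is again a finite universal homeomorphism, so the induced extension of function fields $\phi\colon \kappa(W)\hookrightarrow\kappa(V)$ is purely inseparable of some degree $p^{n_V}$, with $p=c$. Consequently $f_{*}\colon C_i(X,\M)\to C_i(Y,\M)$ splits, along the above bijection, as a direct sum of the norm maps $\phi_{*}\colon \M^{\bullet}(\kappa(V))\to\M^{\bullet}(\kappa(W))$.

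The key step is to show each such $\phi_{*}$ is an isomorphism. By the projection formula recalled in the preliminaries, $\phi_{*}\circ\phi^{*}=p^{n_V}\cdot\mathrm{Id}$, and by the additional identity available for purely inseparable extensions, $\phi^{*}\circ\phi_{*}=p^{n_V}\cdot\mathrm{Id}$ as well. Since $\M$ is $\ell$-torsion with $c=p\nmid\ell$, multiplication by $p^{n_V}$ is invertible on $\M^{\bullet}$, so both compositions are isomorphisms and hence so is $\phi_{*}$. Thus $f_{*}$ is a degreewise isomorphism, and since proper pushforward commutes with the differential of the cycle complex, it is in fact an isomorphism of complexes, inducing isomorphisms on homology. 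The only real pressure point in the argument is the invertibility of $\phi_{*}$ in the purely inseparable case, but as indicated this follows cleanly from the two projection formulas combined with the coprimality hypothesis $c\nmid\ell$.
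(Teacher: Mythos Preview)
Your proposal is correct and follows essentially the same approach as the paper: both arguments show that $f_*$ is already an isomorphism at the cycle level by observing that on each pair of corresponding points the residue field extension is purely inseparable, and then invoking the two formulas $\phi_*\phi^*=[F':F]\,\mathrm{Id}$ and $\phi^*\phi_*=[F':F]\,\mathrm{Id}$ together with the invertibility of $[F':F]$ (a power of $c$) on an $\ell$-torsion module. Your write-up is simply more explicit than the paper's terse version, spelling out the reduction to reduced schemes and the decomposition of $f_*$ on cycles as a direct sum of norm maps.
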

\begin{proof}
Let $f: X \rightarrow Y$ be a universal homeomorphism. Given a point $y \in Y$, its fibre $x$ is a point of $X$ and the map $f_x: x \rightarrow y $ is a purely inseparable field extension. 

The two formulas 
\[(f_x)_{*}( (f_x )^* \alpha ) = \left[ k(x) : k(y) \right] \alpha, \quad (f_x)^{*}( (f_x )_* \beta ) = \left[ k(x) : k(y) \right] \beta
\]

toghether with the fact that $\ell$ is not divisible by the characteristic of $k$, imply that both $(f_x)^{*}$ and $(f_x)_{*}$ are isomorphisms.
Then $f_*$ induces an isomorphism on cycle level, proving our claim.
\end{proof}

\section{The Brauer group of $\Mcal_{1,1}$}\label{sec:m11}

The tools we developed up to this point are enough for a first demonstration of how our methods work. 

The Brauer group of $\Mcal_{1,1}$ has been explored in depth by Antieau and Meier in \cites{AntMeiEll, Mei}, including in mixed characteristic, which is beyond the reach of our tools. 

Our proof has some independent interest though, as it is much simpler than the techniques used in their papers.

\begin{thm}\label{thm:Inv of M11}
Assume the characteristic of $k$ is different from $2$ or $3$, and let $\M$ be a $\ell$-torsion cycle module. Then the cohomological invariants of $\Mcal_{1,1}$ with coefficients in $\M$ are given by 
\[\Inv(\Mcal_{1,1},\M)\simeq\M^{\bullet}(k) \oplus \lbrace 27x^2 + 4y^3 \rbrace  \cdot \M^{\bullet}(k)_{12}.\]
\end{thm}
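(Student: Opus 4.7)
The plan is to use the classical Weierstrass presentation $\Mcal_{1,1} = [U/\Gm]$, where $U = \bA^2 \setminus D$ with $D = V(27x^2 + 4y^3)$ the discriminant locus, and $\Gm$ acts with weights $6$ and $4$ on the coordinates $x$ and $y$ (the hypothesis $\mathrm{char}(k) \neq 2,3$ ensures that this Weierstrass model is valid). By the results in Subsection~\ref{sec:Chow}, computing $\Inv(\Mcal_{1,1},\M)$ amounts to computing the equivariant codimension-zero Chow group with coefficients $A^0_{\Gm}(U,\M)$, so I would work entirely within the formalism of Rost's Chow groups with coefficients.

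I would attack this via the equivariant localization exact sequence associated to the open immersion $j: U \hookrightarrow \bA^2$ and its closed complement $i: D \hookrightarrow \bA^2$:
\[
0 \to A^0_{\Gm}(\bA^2,\M) \to A^0_{\Gm}(U,\M) \xrightarrow{\partial} A^0_{\Gm}(D,\M) \xrightarrow{i_*} A^1_{\Gm}(\bA^2,\M).
\]
Since $\bA^2 \to \Brm\Gm$ is a rank-$2$ vector bundle, homotopy invariance yields $A^0_{\Gm}(\bA^2,\M) = \M^\bullet(k)$ and $A^1_{\Gm}(\bA^2,\M) = \M^\bullet(k) \cdot t$, where $t = c_1(\Lcal)$ is the first Chern class of the tautological line bundle on $\Brm\Gm$.

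For the middle term $A^0_{\Gm}(D,\M)$ I would use the $\Gm$-equivariant normalization $\nu: \bA^1 \to D$ given by $s \mapsto (-s^2,\, 2s^3/(3\sqrt{3}))$, under which $\Gm$ acts on $\bA^1$ with weight $2$. This is a universal homeomorphism, so by the last Proposition of Subsection~\ref{sec:Chow} the pushforward $\nu_*$ is an isomorphism on Chow groups with coefficients. Homotopy invariance applied to the rank-$1$ vector bundle $\bA^1 \to \Brm\Gm$ then gives $A^0_{\Gm}(D,\M) = A^0(\Brm\Gm,\M) = \M^\bullet(k)$. A subtle point, which I would emphasize, is that although the open stack $[(\bA^1 \setminus 0)/\Gm] \simeq \Brm\mu_2$ carries a nontrivial degree-$1$ Kummer class $\alpha$, this class has a residue at the origin of the normalization and therefore does not extend to $[\bA^1/\Gm]$; this is precisely why no $\alpha$-contribution appears in the final answer.

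It then remains to compute $i_*: \M^\bullet(k) \to \M^\bullet(k) \cdot t$. By the projection formula $i_*(m) = m \cdot [D]$, and since $\Delta = 27x^2 + 4y^3$ is a $\Gm$-equivariant section of $\Lcal^{\otimes 12}$, one has $[D] = c_1(\Lcal^{\otimes 12}) = 12 t$; hence the kernel is precisely $\M^\bullet(k)_{12}$. The localization sequence then produces the short exact sequence
\[
0 \to \M^\bullet(k) \to \Inv(\Mcal_{1,1},\M) \xrightarrow{\partial} \M^\bullet(k)_{12} \to 0.
\]
To split it I would produce the explicit lift $\{27x^2 + 4y^3\} \in \Inv^1(\Mcal_{1,1}, \H_{\mu_{12}})$ coming from the Kummer class of $\Delta$, viewed as a nowhere-vanishing section trivializing $\Lcal^{\otimes 12}|_U$, and check that $\partial\{\Delta\} = 1 \in \M^\bullet(k)_{12}$ (since $\Delta$ vanishes to order one along $D$), so that multiplication by $\{\Delta\}$ provides the desired splitting. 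The main technical hurdle I anticipate is the careful analysis of $[D/\Gm]$ via the cuspidal normalization and the verification that the residue of $\{\Delta\}$ genuinely generates $\M^\bullet(k)_{12}$, ruling out the possibility that $\partial$ lands in a proper subgroup.
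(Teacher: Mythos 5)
Your argument is correct and reaches the right answer, but it is organized differently from the paper's proof. The paper works non-equivariantly on the atlas: it first computes $A^0(\bA^2\smallsetminus V,\M)=\M^{\bullet}(k)\oplus\lbrace 27x^2+4y^3\rbrace\cdot\M^{\bullet}(k)$ from the (non-equivariant) localization sequence, with no torsion condition yet, and then imposes descent along the groupoid $(\bA^2\smallsetminus V)\times\Gm\rightrightarrows\bA^2\smallsetminus V$; the constraint $\alpha\in\M^{\bullet}(k)_{12}$ appears there, from ${\rm m}^*\lbrace\Delta\rbrace=12\lbrace t\rbrace+\lbrace\Delta\rbrace$ together with Lemma \ref{lm:Gm torsor}. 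You instead run the localization sequence equivariantly from the start, so the $12$ enters as the class $[D]=12t$ in ${\rm CH}^1_{\Gm}(\bA^2)$ and the torsion condition falls out of $\ker(i_*)$. These are two faces of the same fact (the discriminant has weight $12$), and both routes need the same inputs: the universal homeomorphism $\bA^1\to D$, homotopy invariance, Proposition \ref{prop:AGm}, and the multiplication of Lemma \ref{lm:multiplication} to make sense of $\lbrace\Delta\rbrace\cdot m$ for $m\in\M^{\bullet}(k)_{12}$. Your version is slightly closer in spirit to how the paper handles ${\rm B}\mu_\ell$ in Proposition \ref{prop:Amun}; the paper's version for $\Mcal_{1,1}$ avoids equivariant Chow groups of the singular curve $D$ at the cost of an explicit cocycle check. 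Your observation that the Kummer class on ${\rm B}\mu_2\simeq[(\bA^1\smallsetminus 0)/\Gm]$ ramifies at the cusp and so contributes nothing is a correct and worthwhile point. One small slip: your parametrization $s\mapsto(-s^2,\,2s^3/(3\sqrt{3}))$ of $27x^2+4y^3=0$ is wrong (the exponents are attached to the wrong coordinates given the weights $6$ and $4$); a correct equivariant normalization is $s\mapsto(2s^3,-3s^2)$, with $\Gm$ acting on $s$ with weight $2$ as you say. This does not affect the rest of the argument.
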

\begin{proof}
Consider the standard presentation of $\Mcal_{1,1}$ as the quotient $\left[ (\bA^2 \smallsetminus V) / \Gm \right]$, where $V$ is the curve $27x^2 + 4y^3 = 0$ and $\Gm$ acts by $(x, y) \mapsto (xt^6 , yt^4)$. The curve $V$ is universally homeomorphic to $\bA^1$, so we have an exact sequence
\[0 \rightarrow \M^{\bullet}(k) \rightarrow A^{0}(\bA^2 \smallsetminus V, \M) \rightarrow \M^{\bullet}(k) \rightarrow 0\]

The boundary of the element $\lbrace 27x^2 + 4y^3 \rbrace \in \H^{1}(k(x,y), \mu_\ell)$ at $V$ is $1$, and it is unramified everywhere else so it belongs to $A^{0}(\bA^2 \smallsetminus V, \H_{\ZZ/\ell\ZZ})$. In particular, the submodule $\lbrace 27x^2 + 4y^3 \rbrace  \cdot \M^{\bullet}(k)$ maps injectively to $\M^{\bullet}(k) = A^{0}(V,\M)$, splitting the exact sequence above. Thus 
\[A^{0}(\bA^2 \smallsetminus V, \M) = \M^{\bullet}(k)\oplus \lbrace 27x^2 + 4y^3 \rbrace  \cdot \M^{\bullet}(k).\]

We have to understand which of these elements glue to invariants of $\Mcal_{1,1}$. This is equivalent to check whether the two pullbacks through 
\[\pr_1, {\rm m}: (\bA^2 \setminus V) \times \Gm = (\bA^2 \setminus V) \times_{\Mcal_{1,1}} (\bA^2 \setminus V) \rightrightarrows \bA^2 \setminus V\]
coincide, where ${\rm m}$ denotes the multiplication map. We have 
\[A^{0}((\bA^2 \setminus V) \times \Gm, \M) = A^{0}(\bA^2 \setminus V, \M) \oplus t \cdot A^{0}(\bA^2 \setminus V, \M)\]
and ${\rm m}^*\lbrace 27x^2 + 4y^3 \rbrace = 12\lbrace t \rbrace + \lbrace 27x^2 + 4y^3 \rbrace$. Then an element $ \lbrace 27x^2 + 4y^3 \rbrace \cdot \alpha$ is unramified if and only if $\alpha \in \M^{\bullet}(k)_{12}$. 
\end{proof}

\begin{cor}
We have 
\[^c{\rm Br}(\Mcal_{1,1})\simeq {^c{\rm Br}(k)} \oplus {\rm H}^1(k, \ZZ/12\ZZ)\]
Moreover, every non-trivial element in the groups above is represented by a cyclic algebra.
\end{cor}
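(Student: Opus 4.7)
The idea is to specialize Theorem \ref{thm:Inv of M11} to the cycle module recovering the $\ell$-torsion of the cohomological Brauer group, read off the degree-$2$ part, and then pass to a colimit over $\ell$. Since $\Mcal_{1,1} = [(\bA^2\smallsetminus V)/\Gm]$ is a quotient of a smooth separated algebraic space by an affine smooth group scheme, Proposition \ref{prop:Inv to Br} gives
\[ \mathrm{Br}'(\Mcal_{1,1})_\ell = \Inv^2(\Mcal_{1,1}, \H_{\mu_\ell^\vee}) \]
for every $\ell$ with $c \nmid \ell$. Substituting $\M = \H_{\mu_\ell^\vee}$ in Theorem \ref{thm:Inv of M11} and unwinding $\H_{\mu_\ell^\vee}^i(k) = \H^i(k, \ZZ/\ell\ZZ(i-1))$, the degree-$2$ piece reads
\[ \mathrm{Br}(k)_\ell \;\oplus\; \{27x^2+4y^3\}\cdot \H^1(k,\ZZ/\ell\ZZ)_{12}, \]
where the first summand comes from $\H_{\mu_\ell^\vee}^2(k) = \H^2(k,\mu_\ell)$ and the second from multiplying the degree-$1$ generator by $\H_{\mu_\ell^\vee}^1(k) = \H^1(k,\ZZ/\ell\ZZ)$.

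Taking the colimit over $\ell$ with $c \nmid \ell$, the first summand sweeps out $^c\mathrm{Br}(k)$. For the second, $\H^1(k,\ZZ/\ell\ZZ)_{12} \simeq \H^1(k,\ZZ/\gcd(\ell,12)\ZZ)$ stabilizes to $\H^1(k,\ZZ/12\ZZ)$ as soon as $12\mid\ell$; this is allowed because the standing hypothesis $c \neq 2,3$ makes $12$ coprime to $c$. Since $\Mcal_{1,1}$ is a smooth, separated, generically tame DM stack with quasi-projective coarse moduli space, Theorem \ref{thm:EHKV} lets us replace ${\rm Br}'$ with ${\rm Br}$ throughout, producing the claimed isomorphism.

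For the cyclic-algebra statement, I would appeal to Lemma \ref{lm:H1} to identify the degree-$1$ invariant $\{27x^2+4y^3\} \in \Inv^1(\Mcal_{1,1},\K_\ell)$ with an honest class in $\H^1(\Mcal_{1,1},\mu_\ell)$. Every element of the $\H^1(k,\ZZ/12\ZZ)$ summand is then the cup product of this class with a class pulled back from $\H^1(k,\ZZ/\ell\ZZ) \subseteq \H^1(\Mcal_{1,1},\ZZ/\ell\ZZ)$, and Lemma \ref{lm:cyclic} presents the result as a cyclic algebra. The main bookkeeping obstacle I anticipate is handling the Tate twists inside $\H_{\mu_\ell^\vee}$ cleanly: it is precisely the shift by $-1$ in $\mu_\ell^\vee$ that collapses the coefficients in degree $1$ to the untwisted constant module $\ZZ/\ell\ZZ$, and thus produces the untwisted $\ZZ/12\ZZ$ in the final answer; once this is in place, the rest is a mechanical assembly of the preceding results.
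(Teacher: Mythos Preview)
Your proposal is correct and follows essentially the same approach as the paper: specialize Theorem \ref{thm:Inv of M11} to $\M=\H_{\mu_\ell^\vee}$, let $\ell$ increase, and invoke Lemmas \ref{lm:H1} and \ref{lm:cyclic} for the cyclic-algebra claim. The only minor difference is that you explicitly appeal to Theorem \ref{thm:EHKV} to pass from $\mathrm{Br}'$ to $\mathrm{Br}$, whereas the paper's terse proof leaves this implicit (it is in any case immediate once everything is exhibited as a cyclic algebra or pulled back from $\mathrm{Br}(k)$).
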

\begin{proof}
The formula for the cohomological Brauer group follows immediately by applying Theorem \ref{thm:Inv of M11} to $\M=\H_{\mu_{\ell}^{\vee}}$ for increasing $\ell$. Then Lemmas \ref{lm:H1} and \ref{lm:cyclic} allow us to conclude immediately.
\end{proof}

\section{Some equivariant computations}\label{sec:equiv}


We begin by briefly recalling the Edidin--Graham--Totaro equivariant approximation construction. After that, we will explicitly compute some equivariant Chow groups with coefficients: these computations will be frequently used in the remainder of the paper.

Consider an algebraic space $X$ being acted upon by an algebraic group $G$, and assume $G$ admits a faithful representation $G \subset {\rm GL}_n$. Then $G$ also admits a generically free representation $V$, and if we take the product $V_i= V \times \ldots \times V$ of $i$ copies of $V$ we know that $G$ acts freely on an open subset $U_i$ of $V_i$ whose complement has codimension at least $i$. As the action of $G$ on $X \times U_i$ is free, the quotient $\left[ X \times U_i /G\right]$ is an algebraic space.

Now, the map $\left[ X \times U_i /G\right] \rightarrow \left[ X/G\right]$ is the composition of a vector bundle and an open immersion whose complement has codimension at least $i$. Consider a graded theory of invariants $F^j$ such that:
\begin{itemize}
\item If $E \rightarrow Y$ is a vector bundle, then $F^j(Y)=F^j(E)$.
\item If $U \rightarrow Y$ is an open immersion whose complement has codimension $i$, then $F^j(Y)=F^j(U)$ for every $j< i$.
\end{itemize}
Then we have $F^j(\left[X/G\right])=F^j(\left[X\times U_i/G \right])$ for some sufficiently large $i$. 

An example of such a theory is \'etale cohomology when $X$ and $G$ are smooth, or singular cohomology when everything is defined over $\mathbb{C}$. 

We can also go the other way around: if the theory $F$ is only defined for algebraic spaces, and it has the properties above, then there is only one possible extension to quotient stacks that still satisfies the same properties, namely $F^j(\left[ X/G\right]) = F^j(\left[X\times U_i/G \right])$ for some large enough $i$. 

A simple double fibration argument shows that this does not depend on the choice of the representation of $G$, and moreover this can be proven to be independent of the presentation as well, so it's really an invariant of the stack $\left[X/G\right]$.

This construction, which already existed in the context of equivariant homology, was first used by Totaro \cite{Tot} to compute the Chow rings of some classifying stacks ${\rm B}G$, and was later extended by Edidin and Graham to general quotient stacks \cite{EG}. 

The construction carries over immediately to Chow groups with coefficients. Given an algebraic group $G$ acting a scheme $X$, we will denote the $G$-equivariant Chow groups with coefficients of $X$ by 
\[\bigoplus_{i\geq 0}A^{i}_G(X,{\rm M})=A^*_{G}(X,\M).\] This was first done by Guillot \cite{Guil}. Note that contrary to the case of schemes, there is no maximum bound for the codimension of a non-zero element.

These groups enjoy all the same properties as the ordinary Chow groups with coefficients. Sometimes we will refer to the equivariant cycle groups $C_{G}^{i}(X,{\rm M})$; by this we will always mean the cycles on an appropriate equivariant approximation.

Cohomological invariants turn out to be equal to the zero codimensional Chow group with coefficents for algebraic spaces, hence we can compute them on quotient stacks by using equivariant Chow groups with coefficients.

\begin{prop}
Let $X$ be a smooth quasi-separated algebraic space over $k$, being acted upon by a smooth affine group $G/k$. Then
\[\Inv(\left[X/G\right],\M) = A^0_G(X,\M).\]
\end{prop}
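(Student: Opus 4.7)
The plan is to pass from the stack $[X/G]$ to a concrete smooth algebraic space via equivariant approximation, where both sides of the claimed equality have already been identified.

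First I would choose a representation $V$ of $G$ together with an open $G$-stable subset $U\subseteq V$ on which $G$ acts freely and whose complement in $V$ has codimension at least $2$; such a $V$ exists since $G$ is smooth and affine. The quotient $Y:=[(X\times U)/G]$ is then a smooth, quasi-separated algebraic space. The map $[(X\times V)/G]\to [X/G]$ is a vector bundle and the inclusion $Y\hookrightarrow [(X\times V)/G]$ is an open immersion whose complement has codimension $\geq 2$, so Corollary \ref{cor:hom inv} gives an isomorphism $\Inv([X/G],\M)\cong \Inv(Y,\M)$. On the other hand, by the very definition of the equivariant theory we have $A^0_G(X,\M)=A^0(Y,\M)$.

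This reduces matters to proving $\Inv(Y,\M)=A^0(Y,\M)$ for a smooth, quasi-separated algebraic space $Y$. Such a $Y$ contains a dense open subscheme by \cite{StPr}*{Tag 0ADD}, and Noetherian induction applied to the closed complement produces a dense open subscheme $Y^{\circ}\subseteq Y$ whose complement has codimension at least $2$. Cohomological invariants are insensitive to removing such subsets by Corollary \ref{cor:hom inv}, and the same holds for $A^0$ by the localization sequence of Proposition \ref{prop:properties}(3), since codimension $\geq 2$ kills both the $A_{\dim Y}$ and $A_{\dim Y-1}$ contributions supported on the complement. Hence both sides agree with their values on the smooth scheme $Y^{\circ}$, where the needed equality is the identification of $\Inv$ with $A^0$ for smooth schemes established earlier in Section \ref{sec:preliminaries}.

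I do not expect any real obstacle here, since every tool has been set up in the preceding subsections. The only points worth double checking are that $Y=[(X\times U)/G]$ is indeed an algebraic space (which follows since the action of $G$ on $X\times U$ is free and $X$, $U$ are quasi-separated and smooth) and that the codimension-$\geq 2$ excision property is applied uniformly to $\Inv$ and $A^0$.
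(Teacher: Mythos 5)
Your overall strategy coincides with the one the paper actually relies on: the printed proof is a citation of \cite{PirAlgStack}*{Thm. 4.16} plus Corollary \ref{cor:hom inv}, and the argument behind that citation is exactly the equivariant approximation you describe, reducing to the identification $\Inv(X,\M)=A^0(X,\M)$ for smooth schemes established earlier in Section \ref{sec:preliminaries}. The first half of your plan — passing to $Y=[(X\times U)/G]$, matching $\Inv$ via Corollary \ref{cor:hom inv} and matching $A^0_G$ by the definition of the equivariant theory — is correct and is the right reduction.

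The step that does not hold up is the passage from the algebraic space $Y$ to a scheme. "Noetherian induction applied to the closed complement" does not produce an open subscheme with complement of codimension $\geq 2$: the complement of the schematic locus is closed, not open, so there is nothing to recurse into, and if the schematic locus happens to have a codimension-one complement you are stuck. Worse, the claim itself is false for merely quasi-separated smooth algebraic spaces: the standard example $\bA^1/R$, where $R$ identifies $x$ with $-x$ for $x\neq 0$, is a smooth, quasi-separated, finite type algebraic space of dimension one whose non-schematic locus is the origin, a point of codimension one. The paper performs the analogous excision only in Lemma \ref{lm:H1}, under a \emph{separatedness} hypothesis, which is what makes the codimension bound on the non-schematic locus available there; Tag 0ADD by itself only gives a dense open subscheme, i.e.\ codimension $\geq 1$. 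To close the gap in the stated generality you must either (a) arrange the approximation so that $Y$ is an honest scheme — which \cite{EG} guarantees when $X$ is quasi-projective with linearized action, but not for an arbitrary quasi-separated $X$ — or (b) prove $\Inv(Y,\M)=A^0(Y,\M)$ directly for smooth quasi-separated algebraic spaces, e.g.\ by comparing both sides along an étale cover by a scheme (both theories are determined by their values at the generic points together with ramification at codimension-one points, and these can be tracked through such a cover). Option (b) is precisely the content of \cite{PirAlgStack}*{Thm. 4.16} that cannot be obtained for free from the scheme case.
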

\begin{proof}
This is the content of \cite{PirAlgStack}*{Thm. 4.16} in the case where the coefficients are in étale cohomology. The general case can be proved in the exact same way, as an immediate consequence of \ref{cor:hom inv}.
\end{proof}

In the rest of Section we will perform some explicit computations of equivariant Chow rings with coefficients. These results will be frequently used in the remainder of the paper.

\begin{prop}\label{prop:AGm}
We have
\[ A^{*}_{GL_m}(\Spec(k), \M)={\rm CH}^{*}_{GL_m}(\Spec(k)) \otimes {\rm M}^{\bullet}(k).\]
\end{prop}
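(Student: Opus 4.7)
The plan is to reduce the computation to Chow groups with coefficients of a Grassmannian via equivariant approximation. Take the standard action of $GL_m$ on $V_n = \mathrm{Mat}_{m,n}$ by left multiplication; the open locus $U_n \subset V_n$ of full-rank matrices has complement of codimension $n-m+1$, and the free quotient is $[U_n/GL_m] \simeq \mathrm{Gr}(m,n)$. Hence $A^i_{GL_m}(\Spec(k), \M) \simeq A^i(\mathrm{Gr}(m,n), \M)$ as soon as $n \gg i$, reducing the claim to the identity
\[ A^{*}(\mathrm{Gr}(m,n), \M) = {\rm CH}^{*}(\mathrm{Gr}(m,n)) \otimes_{\ZZ} \M^{\bullet}(k) \]
for every $n \geq m$, after which one passes to the limit.

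To establish this Grassmannian formula, I would exploit the Schubert cell decomposition $\mathrm{Gr}(m,n) = \bigsqcup_\lambda C_\lambda$, with $C_\lambda \cong \bA^{d_\lambda}$, and induct on the closed filtration $X_k = \bigsqcup_{d_\lambda \leq k} C_\lambda$ by dimension. At each step $X_{k-1} \subset X_k$, the localization sequence of Proposition \ref{prop:properties}.(3), combined with homotopy invariance (Proposition \ref{prop:properties}.(5)), reads
\[ A_j(X_{k-1}, \M) \to A_j(X_k, \M) \to \bigoplus_{d_\lambda = k} A_j(\bA^k, \M) \xrightarrow{\partial} A_{j-1}(X_{k-1}, \M), \]
with the third term free over $\M^{\bullet}(k)$ and concentrated in dimension $k$. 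The boundary $\partial$ vanishes: each generator $1 \otimes \mu \in A_k(C_\lambda, \M) \simeq \M^{\bullet}(k)$ lifts to the proper pushforward of the constant class $\mu$ along the closed immersion $\overline{C_\lambda} \hookrightarrow X_k$, and the compatibility of proper pushforward with flat pullback (Proposition \ref{prop:properties}.(4)), together with the disjointness of distinct top-dimensional Schubert cells, shows that the restriction of this lift to $X_k \setminus X_{k-1}$ is exactly $1 \otimes \mu$ on $C_\lambda$ and zero on the other cells. The sequence therefore splits at every stage, and induction yields the desired identity with the Schubert classes forming an $\M^{\bullet}(k)$-basis of both sides.

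The main potential obstacle is the boundary calculation above, which is the only non-formal step; everything else relies solely on the standard properties of Chow groups with coefficients collected in Proposition \ref{prop:properties}. Taking the colimit as $n \to \infty$ recovers the stated formula, with ${\rm CH}^{*}_{GL_m}(\Spec(k)) \simeq \ZZ[c_1, \ldots, c_m]$ generated by the Chern classes of the tautological rank-$m$ bundle, and with the isomorphism sending $c_i \otimes \mu$ to the obvious product class.
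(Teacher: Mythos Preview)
Your argument is correct and takes a genuinely different route from the paper. Both proofs begin with the same equivariant approximation $A^i_{GL_m}(\Spec(k),\M)\simeq A^i(\mathrm{Gr}_m(V),\M)$, but diverge at the computation on the Grassmannian. The paper passes to the full flag variety ${\rm Fl}_m(V)$, which is a tower of projective bundles both over $\Spec(k)$ and over $\mathrm{Gr}_m(V)$; an iterated application of the projective bundle formula gives the result for ${\rm Fl}_m(V)$, and then a splitting of the pullback $A^*(\mathrm{Gr}_m(V),\M)\to A^*({\rm Fl}_m(V),\M)$ (multiply by the top hyperplane power at each level and push down) transfers the statement back to the Grassmannian. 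Your approach instead filters $\mathrm{Gr}_m(V)$ by Schubert cells and runs the localization sequence, using the closures $\overline{C_\lambda}$ to produce lifts that kill the boundary. Two minor remarks: the base-change identity $j^*i_* = i'_*j'^*$ you invoke for an open $j$ and a closed $i$ is immediate at the cycle level and does not actually require the specific compatibility square recorded as Proposition~\ref{prop:properties}(4); and pushing forward the constant class from the possibly singular $\overline{C_\lambda}$ is legitimate because constants are unramified at every codimension-one valuation. Your cellular argument is arguably more elementary and works verbatim for any variety with an affine paving, while the paper's flag-variety argument is more multiplicative, yielding directly the compatibility with the Chern-class generators and reusing a splitting technique that recurs later in the paper.
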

\begin{proof}
Let $V$ be a finite dimensional vector space and let ${\rm Gr}_m(V)$ be the grassmannian of $m$-subspaces of $V$. We are going to prove that:
\begin{equation*}\label{eq:AGr} {\rm CH}^{i}({\rm Gr}_m(V))\otimes {\rm M}^{\bullet}(k) \simeq A^{i}({\rm Gr}_m(V),{\rm M}) \end{equation*}
for every $i\geq 0$ and $0<m\leq{\rm dim}(V)$. This will readily imply the Proposition, because of the isomorphisms:
\[ {\rm CH}^i({\rm Gr}_m(V))\simeq {\rm CH}^i_{GL_m}(\Spec(k)),\quad A^i({\rm Gr}_m(V),{\rm M})\simeq A^i_{GL_m}(\Spec(k),{\rm M}) \]
for vector spaces $V$ of sufficiently high dimension (see \cite{EG}*{Subsec. 3.2}).

Let ${\rm Fl}_m(V)$ be the scheme of complete flags of length $m$ in $V$. Observe that ${\rm Fl}_m(V)$ can be constructed as a tower of projective bundles both over $\Spec(k)$ and ${\rm Gr}_m(V)$.

If the Chow ring of a smooth scheme $X$ is generated by Chern classes of vector bundles, thanks to Proposition \ref{prop:chern} there is a well defined multiplication morphism:
\[ {\rm CH}^i(X)\otimes {\rm M}^{\bullet}(k) \rightarrow A^i(X,{\rm M}) \]
Moreover, if $f:Y\to X$ is a flat morphism of relative constant dimension from a smooth scheme whose Chow ring is also generated by Chern classes, it is easy to check that the pullback of a product is equal to the product of pullbacks.

Applying this to the morphism ${\rm Fl}_m(V)\to {\rm Gr}_m(V)$ we obtain the following commutative diagram:
\begin{equation}\label{eq:diag}
\xymatrix{ {\rm CH}^i({\rm Gr}_m(V))\otimes {\rm M}^{\bullet}(k) \ar[r] \ar[d] & {\rm CH}^i ({\rm Fl}_m(V)) \otimes {\rm M}^{\bullet}(k) \ar[d] \\
   A^i({\rm Gr}_m(V),{\rm M}) \ar[r] & A^i({\rm Fl}_m(V),\M)  } 
\end{equation}
The fact that ${\rm Fl}_m(V)$ is a tower of projective bundles over a point combined with an iterated application of \cite{PirCohHypEven}*{Prop. 2.4} shows that the right vertical morphism in the diagram above is an isomorphism.

Define a splitting $s$ of the bottom horizontal arrow of (\ref{eq:diag}) as follows: at each level of the tower of projective bundles ${\rm Fl}_m(V)\to {\rm Gr}_m(V)$, multiply $r$ times with the appropriate hyperplane section (here $r$ is the dimension of the fibre at that level) and then take the pushforward to the level below. In the same way we can construct a splitting $s'$ at the level of Chow groups.

The pullback $f^*:{\rm CH}^{*}({\rm Gr}_m(V))\to {\rm CH}^{*}({\rm Fl}_m(V))$ is then split injective and from this it is easy to deduce that the top horizontal arrow in the diagram (\ref{eq:diag}) is injective, hence the left vertical arrow must be injective as well.

To show surjectivity, observe that if $\alpha$ is an element in $A^{*}({\rm Gr}_m(V),{\rm M})$, we have
\[ \alpha=s(f^*(\alpha))=s(\sum \xi_i \cdot \beta_i)=\sum s'(\xi_i) \cdot \beta_i \]
for some $\xi_i$ in ${\rm CH}^{*}({\rm Fl}_m(V))$ and $\beta_i$ in ${\rm M}^{\bullet}(k)$. Therefore the left vertical morphism of (\ref{eq:diag}) is surjective and this concludes the proof.
\end{proof}
\begin{prop}\label{prop:Amun}
We have
\[A^{*}_{\mu_\ell}(\Spec(k),{\rm M})=(\ZZ\left[s\right]/\ell s) \otimes {\rm M}^{\bullet}(k) \mathop{\oplus}_n s^n \alpha\cdot  \M^{\bullet}(k)_{\ell} ,\]
where $s$ has codimension $1$, cohomological degree $0$ and $\alpha$ has codimension $0$ and cohomological degree $1$.
In particular, if $M$ is of $\ell$-torsion
\[A^{*}_{\mu_\ell}(\Spec(k),\M) \simeq A^{*}_{\mu_\ell}(\Spec(k),{\rm H}_{\ZZ/\ell\ZZ}) \otimes \M^{\bullet}(k).\]
\end{prop}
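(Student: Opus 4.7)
The plan is to compute $A^{*}_{\mu_\ell}(\Spec k, \M)$ via equivariant approximation followed by localization. Consider the weight-one linear action of $\mu_\ell$ on $\bA^{n+1}$; the action is free on $U = \bA^{n+1} \setminus \{0\}$, and the quotient $Y_n = U/\mu_\ell$ provides an equivariant approximation with $A^j(Y_n, \M) \simeq A^j_{\mu_\ell}(\Spec k, \M)$ for $j \leq n$. The key geometric observation is that the $\mu_\ell$-quotient of $\OO_{\PP^n}(-1)$ via $v \mapsto v^{\otimes \ell}$ is canonically $\OO_{\PP^n}(-\ell)$, so $Y_n$ is the complement of the zero section in the total space $V$ of $\OO_{\PP^n}(-\ell)$, fibered over $\PP^n$.

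Next, I apply the localization long exact sequence (Proposition \ref{prop:properties}.(3)) to the pair $\PP^n \hookrightarrow V \hookleftarrow Y_n$, together with homotopy invariance (Proposition \ref{prop:properties}.(5)) identifying $\pi^{*}\colon A^{*}(\PP^n, \M) \xrightarrow{\sim} A^{*}(V, \M)$. Under this identification the pushforward $i_{*}\colon A^{j}(\PP^n, \M) \to A^{j+1}(V, \M) \simeq A^{j+1}(\PP^n, \M)$ is multiplication by $c_1(\OO(-\ell)) = -\ell h$, where $h = c_1(\OO(1))$. By the projective bundle formula (Proposition \ref{prop:properties}.(11)) the graded piece $A^{j,q}(\PP^n, \M)$ equals $h^{j}\M^{q}(k)$ in the stable range, so multiplication by $-\ell h$ reduces to multiplication by $-\ell$ on $\M^{q}(k)$, with kernel $\M^{q}(k)_{\ell}$ and cokernel $\M^{q}(k)/\ell\M^{q}(k)$. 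Unrolling the long exact sequence produces, for every $j \geq 1$ and cohomological degree $q$, a short exact sequence
\[ 0 \to \M^{q}(k)/\ell\M^{q}(k) \to A^{j,q}(Y_n, \M) \to \M^{q-1}(k)_{\ell} \to 0, \]
and the analogous sequence with $\M^{q}(k)$ replacing $\M^{q}(k)/\ell\M^{q}(k)$ when $j = 0$.

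Next, I construct the asserted generators and exhibit the splittings. Put $s := \pi^{*} h \in A^{1,0}(Y_n, \M)$; the identity $\ell h = -i_{*}(1)$ together with the exactness of the localization sequence forces $\ell s = j^{*}(\ell h) = 0$, yielding the ring factor $\ZZ[s]/\ell s$. Let $\alpha \in A^{0,1}(Y_n, \H_{\ZZ/\ell\ZZ}) = \H^{1}(Y_n, \mu_\ell)$ be the class of the tautological $\mu_\ell$-torsor $U \to Y_n$; because $\M_{\ell}$ is naturally a module over $\H_{\ZZ/\ell\ZZ} = \K_{\ell}$, for every $m \in \M^{q-1}(k)_{\ell}$ the product $\alpha \cdot m$ is a well-defined element of $A^{0,q}(Y_n, \M)$. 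A direct residue computation, using that $\partial \alpha$ equals the generator of $\ZZ/\ell\ZZ = \H_{\ZZ/\ell\ZZ}^{0}(k)$, shows $\partial(s^{j}\alpha \cdot m) = s^{j} m$, producing the required splitting $s^{j}\alpha \cdot \M^{q-1}(k)_{\ell} \hookrightarrow A^{j,q}(Y_n, \M)$ of the short exact sequence above.

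I expect the main obstacle to be verifying this splitting coherently across all codimensions and confirming that the resulting submodules exhaust $A^{*}_{\mu_\ell}(\Spec k, \M)$ without overlap. The \emph{in particular} clause then follows by inspection: for an $\ell$-torsion cycle module one has $\M^{\bullet}(k) = \M^{\bullet}(k)/\ell\M^{\bullet}(k) = \M^{\bullet}(k)_{\ell}$, and the obtained decomposition reorganises into $A^{*}_{\mu_\ell}(\Spec k, \H_{\ZZ/\ell\ZZ}) \otimes \M^{\bullet}(k)$ via the $\K_{\ell}$-module structure on $\M$.
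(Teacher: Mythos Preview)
Your argument is correct and is essentially the same computation as the paper's, just packaged differently: the paper uses the presentation ${\rm B}\mu_\ell=[(\bA^1\setminus\{0\})/\Gm]$ with the weight-$\ell$ action and runs the $\Gm$-equivariant localization sequence for $\{0\}\hookrightarrow\bA^1$ (using the known $A^{*}_{\Gm}(\Spec k,\M)=\M^{\bullet}(k)[s]$), whereas you unwind the equivariant approximation explicitly as $Y_n\simeq\OO_{\PP^n}(-\ell)\setminus 0$ and run the non-equivariant localization there. Both approaches yield the same pushforward-by-$\ell s$ short exact sequences, and your splitting via the tautological $\mu_\ell$-torsor class $\alpha$ is exactly the paper's splitting by $\{t\}\cdot\tau$ (Lemma~\ref{lm:Gm torsor}) read on the approximation; the only cosmetic difference is that the paper multiplies using the $\K_{\rm M}$-module structure on $\M$ directly, while you pass through the induced $\K_\ell$-action on $\M_\ell$.
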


\begin{proof}
We have ${\rm B}\mu_\ell=[(\bA^1\smallsetminus\{0\})/\Gm]$, where the action is defined as $\lambda\cdot x=\lambda^\ell x$. 

The localization long exact sequence for the $\Gm$-equivariant embedding $\{0\}\hookrightarrow\bA^1$ reads as follows:
\[ \cdots \to A^i_{\Gm}(\bA^1,\M)\to A^i_{\Gm}(\bA^1\smallsetminus\{0\},\M) \to A^i_{\Gm}(\{0\},\M) \to A^{i+1}_{\Gm}(\bA^1,\M) \to \cdots \]
By Proposition \ref{prop:AGm} we have:
\[ A_{\Gm}^i(\bA^1,\M)\simeq A_{\Gm}^i(\{0\},\M) \simeq \M^{\bullet}(k) s^i .\]
Moreover by Proposition \ref{prop:chern} the pushforward $i_*:A^i_{\Gm}(\{0\},\M) \to A^{i+1}_{\Gm}(\bA^1,\M)$ corresponds to multiplication by $[\{0\}]=\ell s$.

Proposition \ref{prop:AGm} assures us that multiplying by $s$ defines an injective morphism, hence we deduce:
\[{\rm ker}(i_*)=\M^{\bullet}(k)_{\ell}\cdot s^i,\quad {\rm im}(i_*)=\M^{\bullet}(k)\cdot \ell s^{i+1} .\]
It follows that for every $i\geq 0$ we have the following short exact sequence:
\[ 0\to {\rm CH}^i_{\mu_\ell}(\Spec(k))\otimes\M^{\bullet}(k) \to A_{\Gm}^i(\bA^1\smallsetminus\{0\},\M) \to \M^{\bullet}(k)_{\ell}\cdot s^i \to 0 .\]
Recall from Lemma \ref{lm:Gm torsor} that there is a splitting 
\[A^i(\{0\},\M)\rightarrow A^i(\bA^1\smallsetminus\{0\},\M)\]
given by multiplication by the element $\{t\} \in A^0(\bA^1\smallsetminus\{0\},\K_{\M}^1)$. 

If we consider $\{t\} \cdot \tau$, where $\tau \in \M(k)$ is an element of $\ell$-torsion, this element is invariant for the $\Gm$-action and thus it glues to an element $\gamma \in A^0_{\Gm}(\bA^1\smallsetminus\{0\}, \M)$, splitting the exact sequence for $i=0$. 

Combining this splitting with multiplication by $s^i$ we obtain a splitting for every codimension $i$, proving our claim. 

\end{proof}

\begin{cor}\label{cor:Amun2}
Let $\M$ be a torsion cycle module, and let $\mu_\ell$ act trivially on a scheme $X$. Then
\[A^{*}_{\mu_\ell}(X,\M) \simeq (\ZZ\left[s\right]/\ell s)\otimes A^{*}(X,\M) \oplus_{n\geq 0} s^n\alpha A^{*}(X,\M)_\ell .\]

In particular, 
\[A^{0}_{\mu_\ell}(X,\M)=A^{0}(X,\M)\oplus \alpha A^{0}(X,\M)_\ell.\]
\end{cor}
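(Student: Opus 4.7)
The plan is to run the argument of Proposition \ref{prop:Amun} in families over $X$. Since $\mu_\ell$ acts trivially on $X$, we have $[X/\mu_\ell]=[X\times(\bA^1\smallsetminus\{0\})/\Gm]$ with $\Gm$ acting trivially on $X$ and with weight $\ell$ on the second factor. The $\Gm$-equivariant localization sequence attached to $X\times\{0\}\hookrightarrow X\times\bA^1$ reads
\[ \cdots\to A^i_{\Gm}(X\times\bA^1,\M)\to A^i_{\mu_\ell}(X,\M) \xrightarrow{\partial} A^i_{\Gm}(X\times\{0\},\M) \xrightarrow{i_*} A^{i+1}_{\Gm}(X\times\bA^1,\M) \to\cdots, \]
so my first task would be to identify the outer terms and the map $i_*$ explicitly.

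For the trivial $\Gm$-action, iterating the projective bundle formula (Proposition \ref{prop:properties}(11)) on $X\times\PP^N$ with $N\to\infty$ yields
\[ A^{*}_{\Gm}(X,\M)\simeq\bigoplus_{n\geq 0} s^n\cdot A^{*-n}(X,\M), \]
where $s=c_1(\OO(1))$; by homotopy invariance the same formula also computes $A^{*}_{\Gm}(X\times\bA^1,\M)$. Under these identifications, Proposition \ref{prop:chern} together with the self-intersection formula identifies $i_*$ with multiplication by the equivariant Chern class $c_1=[X\times\{0\}]=\ell s$. Computing the kernel and cokernel of this operation then cuts the long exact sequence into short exact sequences
\[ 0\to (\ZZ[s]/\ell s)\otimes A^i(X,\M) \to A^i_{\mu_\ell}(X,\M) \to \bigoplus_{n\geq 0} s^n\cdot A^{i-n}(X,\M)_\ell \to 0 \]
for each codimension $i$.

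The final step is to split these sequences following the strategy of Proposition \ref{prop:Amun}. Let $\{t\}\in A^0(X\times(\bA^1\smallsetminus\{0\}),\K_\ell)$ be the pullback of the tautological class from Lemma \ref{lm:Gm torsor}; the $\Gm$-action sends $\{t\}$ to $\{t\}+\ell\{\lambda\}$, so for any $\ell$-torsion class $\beta\in A^i(X,\M)_\ell$ the product $\{t\}\cdot\beta$ is $\Gm$-invariant and thus descends to a class $\alpha\cdot\beta\in A^i_{\mu_\ell}(X,\M)$. Multiplying by $s^n$ produces the splitting at codimension $i+n$, and the codimension-zero assertion follows at once by restricting to $i=n=0$. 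The main obstacle is really just the bookkeeping of codimension and cohomological degree through the localization sequence and verifying compatibility with the $A^{*}(X,\M)$-module structure from Proposition \ref{prop:properties}(9); once the outer terms and $i_*$ have been identified, the argument reduces term-by-term to the $X=\Spec(k)$ case already handled in Proposition \ref{prop:Amun}.
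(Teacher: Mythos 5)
Your proposal is correct and matches the paper's argument essentially verbatim: the paper likewise identifies $A^{*}_{\Gm}(X,\M)\simeq A^{*}(X,\M)[s]$ for the trivial action via the projective bundle formula and then reruns the localization-sequence and $\{t\}$-splitting argument of Proposition \ref{prop:Amun} for the weight-$\ell$ action on $X\times(\bA^1\smallsetminus\{0\})$, using compatibility of the pullback with the long exact sequence. Your identification of $i_*$ with multiplication by $\ell s$ and the resulting short exact sequences, split by $s^n\alpha\cdot\beta$ for $\ell$-torsion $\beta$, is exactly the intended proof.
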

\begin{proof}
Note that if $\Gm$ acts trivially on $X$ then $A^i_{\Gm}(X,\M)=A^i(X\times \mathbb{P}^r_k,\M)$ for some $r>i$, showing that \[A^{*}_{\Gm}(X,\M)=A^{*}(X,\M)\left[s\right]= {\rm CH}^{*}_{\Gm}(\Spec(k))\otimes A^{*}(X,\M).\]

Using this we can just repeat the proof of the previous Lemma using the $\ell$-twisted action of $\Gm$ on $X \times \Gm$ and the compatibility of the pullback \[A^{*}_{\Gm}(X,\M)\rightarrow A^{*}_{\Gm}(X\times \Gm,\M)\] with the long exact sequence.
\end{proof}

\begin{prop}\label{prop:ABPGL2 odd}
Let $\M$ be an odd torsion cycle module. Then
\[A^{*}_{{\rm PGL}_2}(\Spec(k),\M)={\rm CH}^{*}_{{\rm PGL}_2}(\Spec(k))\otimes \M^{\bullet}(k).\]
\end{prop}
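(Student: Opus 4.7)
The plan is to exploit the central isogeny $1 \to \mu_2 \to {\rm SL}_2 \to {\rm PGL}_2 \to 1$, which gives a $\mu_2$-gerbe $\pi \colon {\rm BSL}_2 \to {\rm BPGL}_2$. Since $\M$ is odd torsion, this gerbe should be transparent to $A^{*}(-,\M)$, so the strategy is to compute $A^{*}_{{\rm SL}_2}(\Spec(k),\M)$ — which is much simpler than the ${\rm PGL}_2$ version — and then transfer the answer back through $\pi^{*}$.

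For the first step I would use the $\Gm$-torsor ${\rm BSL}_2 \to {\rm BGL}_2$ induced by the determinant. A Gysin/localisation argument in the spirit of Lemma~\ref{lm:Gm torsor} and Proposition~\ref{prop:Amun} yields an exact sequence
\[\cdots \to A^{i-1}_{{\rm GL}_2}(\Spec(k),\M) \xrightarrow{\cdot c_1} A^{i}_{{\rm GL}_2}(\Spec(k),\M) \to A^{i}_{{\rm SL}_2}(\Spec(k),\M) \to A^{i}_{{\rm GL}_2}(\Spec(k),\M) \xrightarrow{\cdot c_1} \cdots,\]
where $c_1$ is the first Chern class of the universal determinant line bundle. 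Proposition~\ref{prop:AGm} gives $A^{*}_{{\rm GL}_2}(\Spec(k),\M) = \M^{\bullet}(k)[c_1,c_2]$, on which multiplication by $c_1$ is injective, so the sequence collapses to $A^{*}_{{\rm SL}_2}(\Spec(k),\M) = \M^{\bullet}(k)[c_2]$, with $c_2$ the second Chern class of the standard representation of ${\rm SL}_2$.

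The crucial step is to show that $\pi^{*}$ is an isomorphism. Via equivariant approximation, pick a representation $V$ of ${\rm PGL}_2$ and an open subscheme $U \subset V^{\oplus N}$ on which ${\rm PGL}_2$ acts freely, with complement of arbitrarily high codimension; since the central $\mu_2 \subset {\rm SL}_2$ acts trivially on $V$, the induced morphism $[U/{\rm SL}_2] \to U/{\rm PGL}_2$ is a $\mu_2$-gerbe over a smooth quasi-projective variety. This gerbe is trivialised étale-locally, and on a trivialisation Corollary~\ref{cor:Amun2} gives $A^{*}(X \times {\rm B}\mu_2, \M) = A^{*}(X, \M)$ because the $2$-torsion part of $\M^{\bullet}(k)$ vanishes; sheaf-theoretic descent then globalises this to the full gerbe. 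A parallel viewpoint, useful for bookkeeping, is the proper pushforward $\pi_{*}$: the composition $\pi_{*} \circ \pi^{*}$ equals multiplication by $\tfrac{1}{2}$ (the reciprocal of the order of the generic stabiliser), which is invertible on $\M$ and gives immediately that $\pi^{*}$ is a split monomorphism.

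Finally, to match generators: ${\rm CH}^{*}_{{\rm PGL}_2}(\Spec(k)) = \ZZ[c_2,c_3]/(2c_3)$ is generated by the Chern classes of the adjoint representation (via ${\rm PGL}_2 \simeq {\rm SO}_3$). The adjoint representation restricts to ${\rm Sym}^{2}(V)$ as an ${\rm SL}_2$-module, and a Chern-roots computation using $c_1(V) = 0$ gives $\pi^{*}c_2^{{\rm PGL}_2} = 4c_2^{{\rm SL}_2}$ and $\pi^{*}c_3^{{\rm PGL}_2} = 0$. For odd torsion $\M$ the factor $4$ is a unit, so $\pi^{*}$ identifies $\M^{\bullet}(k)[c_2^{{\rm PGL}_2}]$ with all of $A^{*}_{{\rm SL}_2}(\Spec(k),\M)$, while the $2$-torsion class $c_3^{{\rm PGL}_2}$ vanishes in ${\rm CH}^{*}_{{\rm PGL}_2}(\Spec(k)) \otimes \M^{\bullet}(k)$, completing the identification. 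The principal obstacle is making the $\mu_2$-gerbe isomorphism precise — essentially a descent statement for $A^{*}(-,\M)$ along $\mu_2$-gerbes with odd-torsion coefficients — which the transfer formula above is designed to bypass cleanly.
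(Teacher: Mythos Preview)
Your approach via the central isogeny ${\rm SL}_2 \to {\rm PGL}_2$ is genuinely different from the paper's, which instead uses the isomorphism ${\rm PGL}_2 \simeq {\rm SO}_3$ together with the \emph{split} extension $O_3 \simeq {\rm SO}_3 \times \mu_2$, computes $A^{*}_{O_n}(\Spec(k),\M)$ for $n\le 3$ by an inductive stratification argument (Lemma~\ref{lm:AOn}), and then strips off the $\mu_2$ factor via Corollary~\ref{cor:Amun2}. Your computation of $A^{*}_{{\rm SL}_2}(\Spec(k),\M)=\M^{\bullet}(k)[c_2]$ via the $\Gm$-torsor ${\rm BSL}_2\to{\rm BGL}_2$ is correct and pleasantly short, and the Chern-root matching $\pi^{*}c_2^{{\rm PGL}_2}=4c_2^{{\rm SL}_2}$, $\pi^{*}c_3^{{\rm PGL}_2}=0$ is right.

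The gap is exactly where you flag it, and it is not cosmetic. The extension $1\to\mu_2\to{\rm SL}_2\to{\rm PGL}_2\to 1$ is \emph{non-split}, so the $\mu_2$-gerbe ${\rm BSL}_2\to{\rm BPGL}_2$ is non-trivial; at the level of equivariant approximations, if $U$ is an open in a ${\rm PGL}_2$-representation on which ${\rm PGL}_2$ acts freely, then $\mu_2\subset{\rm SL}_2$ acts trivially on $U$ and $[U/{\rm SL}_2]$ is a genuine gerbe, not an algebraic space. Corollary~\ref{cor:Amun2} only treats \emph{trivial} $\mu_2$-actions (equivalently, product gerbes $X\times{\rm B}\mu_2$), so it does not apply. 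Your two proposed workarounds both fall short: the functor $A^{*}(-,\M)$ does not satisfy \'etale descent (only smooth-Nisnevich, and only in codimension zero per Theorem~\ref{thm:M sheaf}), so the local trivialisation cannot be globalised; and the gerbe map ${\rm BSL}_2\to{\rm BPGL}_2$ is not representable, so no proper pushforward $\pi_{*}$ is available in the paper's framework. Even granting a transfer with $\pi_{*}\pi^{*}=\tfrac{1}{2}$, that yields only that $\pi^{*}$ is a split monomorphism --- you still need injectivity of $\pi^{*}$ to conclude that $A^{*}_{{\rm PGL}_2}$ embeds in $\M^{\bullet}(k)[c_2^{{\rm SL}_2}]$, and surjectivity of $\pi^{*}$ to finish, and neither descent nor the transfer delivers both.

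This is precisely why the paper takes the $O_3$-route: the splitting $O_3={\rm SO}_3\times\mu_2$ makes ${\rm B}O_3={\rm BSO}_3\times{\rm B}\mu_2$ a \emph{trivial} $\mu_2$-gerbe, so Corollary~\ref{cor:Amun2} applies on the nose and gives $A^{*}_{O_3}=A^{*}_{{\rm SO}_3}$ for odd-torsion $\M$ without any descent. The price is the longer inductive computation of $A^{*}_{O_3}$ in Lemma~\ref{lm:AOn}; the payoff is that no statement about non-trivial gerbes is needed.
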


The strategy of proof is the following: we first compute $A^{*}({\rm O}_n(\Spec(k)),\M)$ for $n=1,2,3$ and after that we exploit the isomorphism ${\rm PGL}_2\simeq {\rm SO}_3$ to conclude our computation. 

The proof is borrowed almost verbatim from \cite{PirCohHypThree}*{Cor. 1.10}, albeit we have to deal here with a generic torsion cycle module $\M$, which in particular may not possess a pairing.

\begin{lm} \label{lm:AOn}
    Let $\M$ be an odd torsion cycle module. Then we have:
    \begin{itemize}
        \item $A^*_{O_1}(\Spec(k),\M)= \M^{\bullet}(k)$.
        \item $A^*_{O_2}(\Spec(k),\M)\simeq\M^{\bullet}(k)[c_2]\simeq {\rm CH}_{O_2}^*(\Spec(k))\otimes\M^{\bullet}(k)$.
        \item $A^{*}_{O_3}(\Spec(k),\M)\simeq \M^{\bullet}(k)[c_2]\simeq {\rm CH}^{*}_{O_3}(\Spec(k))\otimes\M^{\bullet}(k)$.
    \end{itemize}
\end{lm}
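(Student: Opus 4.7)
The plan is to handle each $n \in \{1,2,3\}$ separately, using explicit presentations of $\mathrm{B}O_n$ as quotient stacks together with the odd-torsion hypothesis on $\M$ to suppress the $2$-torsion classes that appear in the integral Chow rings of these classifying stacks.

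Case $n=1$ is immediate from Proposition \ref{prop:Amun} applied to $O_1 = \mu_2$: the contribution $s^n\alpha\cdot \M^{\bullet}(k)_2$ vanishes because $\M^{\bullet}(k)_2 = 0$, and in the remaining summand $(\ZZ[s]/2s)\otimes \M^{\bullet}(k)$ the relation $2s=0$ combined with the unique $2$-divisibility of $\M^{\bullet}(k)$ forces $s^i\cdot \tau = 0$ for all $i\geq 1$.

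For $n=2$, I would use the split extension $1\to \Gm\to O_2\to \mu_2\to 1$ (with $\mu_2$ acting on $\Gm=\mathrm{SO}_2$ by inversion) to present $\mathrm{B}O_2 \cong [\mathrm{B}\Gm/\mu_2]$. The main step is to show that for odd-torsion $\M$ the pullback
\[A^*_{O_2}(\Spec(k),\M) \longrightarrow A^*_{\Gm}(\Spec(k),\M)^{\mu_2}\]
is an isomorphism. This follows from a transfer argument: for a finite étale cover whose degree is invertible on $\M$, the composition transfer-then-pullback is multiplication by the degree, yielding a splitting of the pullback onto the invariants. Once this is known, the $\mu_2$-invariants of $\M^{\bullet}(k)[t]$ under $t\mapsto -t$ are exactly the even polynomials $\M^{\bullet}(k)[t^2]$, and Proposition \ref{prop:chern} lets me identify $t^2$ with $c_2$ of the standard representation of $O_2$.

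For $n=3$, the key observation is that $-\mathrm{Id}\in O_3$ has determinant $-1$ (since $n$ is odd), so the exact sequence $1\to \mathrm{SO}_3\to O_3\to \mu_2\to 1$ splits and gives $O_3\cong \mathrm{SO}_3\times \mu_2$. Combining with Case $n=1$ and a Künneth-type argument yields $A^*_{O_3}(\Spec(k),\M)\cong A^*_{\mathrm{SO}_3}(\Spec(k),\M)$. To compute the latter, I would repeat the Weyl-invariants strategy for the pair $\mathrm{SO}_2\subset \mathrm{SO}_3$, whose Weyl group is $\mu_2$ acting on $\mathrm{SO}_2=\Gm$ by inversion. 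Unlike in the $O_2$ case, the class $c_3\in \mathrm{CH}^3(\mathrm{BSO}_3)$ is a genuine nonzero $2$-torsion class, but it dies after tensoring with an odd-torsion cycle module, so the pullback to the Weyl invariants is again an isomorphism and one obtains $\M^{\bullet}(k)[c_2]$.

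The main obstacle is the absence of a ring (or even pairing) structure on $\M$: many of the invariant-theoretic manipulations above are traditionally performed inside an $\M^{\bullet}(k)$-algebra, whereas here they must be rephrased in terms of the $A^*(-,\H_{\ZZ/\ell\ZZ})$-module structure on $A^*(-,\M)$ provided by Proposition \ref{prop:chern}. This is especially delicate when defining Chern class operators and when formulating the transfer/averaging maps that underlie the descent to invariants. The odd-torsion hypothesis is what keeps these arguments manageable, simultaneously killing the integral $2$-torsion contributions in $\mathrm{CH}^*_{O_n}(\Spec(k))$ and ensuring that divisions by $2$ needed in the transfer arguments are well-defined on $\M$.
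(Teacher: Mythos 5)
Your proof is essentially correct but follows a genuinely different route from the paper's. The paper works inside the standard $n$-dimensional representation $V$: it stratifies $V\smallsetminus\{0\}$ into the complement $B$ of the quadric cone and the cone $C$ itself, computes $A^{*}_{O_n}(B,\M)$ and $A^{*}_{O_n}(C,\M)$ via the transitive actions (with stabilizers $O_{n-1}\times\mu_2$ and $O_{n-2}\ltimes H$ respectively), and then assembles everything with localization sequences, inducting on $n$; this is uniform in $n$ and is what makes the argument portable to \cite{PirCohHypThree}. You instead exploit the group structure in low rank: $O_1=\mu_2$ directly from Proposition \ref{prop:Amun}, $O_2=\Gm\rtimes\mu_2$ via a degree-$2$ finite \'etale transfer for $\Brm\Gm\to\Brm O_2$ (legitimate here: $\pi_*$ exists for finite morphisms of equivariant approximations, $\pi_*\pi^{*}=2$ is invertible on odd-torsion $\M$, and $\pi^{*}\pi_{*}=\sum_{g\in\mu_2}g^{*}$ by Rost's base-change compatibility, so $\pi^{*}$ identifies $A^{*}_{O_2}$ with the $\mu_2$-invariants $\M^{\bullet}(k)[t^2]$), and $O_3=\mathrm{SO}_3\times\mu_2$ followed by a maximal-torus/Weyl-group reduction. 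The one point you should make precise is the $\mathrm{SO}_3$ step: there is no finite \'etale transfer from $\Brm T$ to $\Brm\mathrm{SO}_3$ (the fibre $\mathrm{SO}_3/N(T)$ is not finite), so the "Weyl-invariants strategy" must be implemented through the Borel subgroup, i.e.\ via the proper $\PP^1$-bundle $\Brm B\to\Brm\mathrm{SO}_3$ and the projective bundle formula of Proposition \ref{prop:properties}.(11), which gives injectivity of the restriction to $A^{*}_{T}(\Spec(k),\M)=\M^{\bullet}(k)[t]$ landing in the invariants $\M^{\bullet}(k)[t^2]$; surjectivity then follows because $c_2$ restricts to $-t^2$, so the composite $\M^{\bullet}(k)[c_2]\to A^{*}_{\mathrm{SO}_3}(\Spec(k),\M)\to\M^{\bullet}(k)[t^2]$ is already onto. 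With that substitution your argument closes; what it buys is a shorter, more structural proof for $n\leq 3$, at the cost of not generalizing to higher $O_n$ the way the paper's stratification does.
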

\begin{proof}
Let $\M$ be an odd torsion cycle module. Using the description of $A^{*}_{\mu_2}(\Spec(k),\M)$ given by Proposition \ref{prop:Amun}, we see that every element of codimension $>0$ is both of $2$-torsion and $\ell$-torsion, hence $0$. We deduce:
\begin{equation}\label{eq:Amu2}
A^{*}_{O_1}(\Spec(k),\M)\simeq A^{*}_{\mu_2}(\Spec(k),\M)\simeq \M^{\bullet}(k)
\end{equation}

Let $V$ be an $n$-dimensional vector space endowed with a non-degenerate quadratic form $q$. Set ${\rm O}_n=O(V,q)$ and define $C\subset V\smallsetminus\{0\}$ as the vanishing locus of $q$. Let $B$ be the complement of $C$ in $V\smallsetminus\{0\}$. All these schemes are ${\rm O}_n$-invariant.

We can compute $A_{{\rm O}_n}^{*}(B,\M)$ as follows: consider the morphism $B\to\Gm$ induced by $q$ and form the cartesian square
\[ \xymatrix{ \widetilde{B} \ar[r] \ar[d] & B \ar[d] \\
\Gm \ar[r] & \Gm } \]
where the bottom morphism is the square map. We have $\widetilde{B}\simeq Q\times\Gm$, where $Q\subset B$ is the fibre over the unit in $\Gm$, and moreover both horizontal arrows are $\mu_2$-torsors. Therefore:
\begin{equation}\label{eq:AB first}
A_{{\rm O}_n\times\mu_2}^{*}(Q\times\Gm,\M)\simeq A_{{\rm O}_n}^{*}(B,\M) 
\end{equation}
Let $\Lcal$ be the $({\rm O}_n\times\mu_2)$--equivariant line bundle over $Q$ such that the equivariant $\Gm$-torsor $Q\times\Gm\to Q$ is the complement in $\Lcal$ of the zero section. The associated localization exact sequence is:
\begin{equation} \label{eq:seq for Q}
\begin{tikzcd}
  \cdots \rar &  A^i_{{\rm O}_n\times\mu_2}(Q\times\Gm,\M) \rar
             \ar[draw=none]{d}[name=X, anchor=center]{}
    & A^i_{{\rm O}_n\times\mu_2}(\Lcal,\M) \ar[rounded corners,
            to path={ -- ([xshift=2ex]\tikztostart.east)
                      |- (X.center) \tikztonodes
                      -| ([xshift=-2ex]\tikztotarget.west)
                      -- (\tikztotarget)}]{dll}[at end]{} \\      
   A^i_{{\rm O}_n\times\mu_2}(Q,\M) \rar & A^{i+1}_{{\rm O}_n\times\mu_2}(Q\times\Gm,\M) \rar & \cdots 
\end{tikzcd}
\end{equation}
Via the identification $A^{*}_{{\rm O}_n\times\mu_2}(\Lcal,\M)\simeq A^{*}_{{\rm O}_n\times\mu_2}(Q,\M)$, we see that the last map is given by the multiplication by $c_1^{{\rm O}_n\times\mu_2}(\Lcal)$.

The action of ${\rm O}_n\times\mu_2$ on $Q$ is transitive: its stabilizer is ${\rm O}_{n-1}\times\mu_2$, hence $[Q/{\rm O}_n]\simeq B({\rm O}_{n-1}\times\mu_2)$ and
\begin{equation} \label{eq:AQ}
A_{O_{n}\times\mu_2}^{*}(Q,\M)\simeq A_{{\rm O}_{n-1}\times\mu_2}^{*}(\Spec(k),\M) \simeq A_{{\rm O}_{n-1}}^{*}(\Spec(k),\M)
 \end{equation}
where the last isomorphism is a consequence of the triviality of $A_{\mu_2}^{*}(\Spec(k),\M)$.

By Proposition \ref{prop:chern}, we have \[c_1^{{\rm O}_n\times\mu_2}(\Lcal)(\alpha)=c_1^{{\rm O}_n\times\mu_2}(\Lcal)(1)\cdot\alpha\] for every $\alpha$ in $A^{*}_{{\rm O}_n\times\mu_2}(Q,\M)$, where $c_1^{{\rm O}_n\times\mu_2}(\Lcal)$ is regarded as an element inside $A^1_{{\rm O}_n\times\mu_2}(Q,{\rm H}_{\ZZ/\ell\ZZ})$.

Observe that from (\ref{eq:AQ}) it follows that the part of $A^1_{{\rm O}_n\times\mu_2}(Q,{\rm H}_{\ZZ/\ell\ZZ})$ of cohomological degree $0$ is equal to $\ZZ/\ell\ZZ \cdot c_1/\langle 2\cdot c_1 \rangle=0$. Therefore, from the sequence (\ref{eq:seq for Q}) we deduce the exact sequence:
\begin{equation*}
0\to A^i_{O_{n}\times\mu_2} (Q,\M) \to  A^i_{O_{n}\times\mu_2} (Q\times\Gm,\M) \to A^i_{{\rm O}_n\times\mu_2}(Q,\M)\to 0
\end{equation*}
Combining this with the content of (\ref{eq:AQ}) and (\ref{eq:AB first}), we get:
\begin{equation}\label{eq:AB}
A^{*}_{{\rm O}_n}(B,\M)\simeq A^{*}_{{\rm O}_{n-1}}(\Spec(k),\M)\oplus A^{*}_{{\rm O}_{n-1}}(\Spec(k),\M)\cdot\beta 
\end{equation}
as $\Het_{\ZZ/\ell\ZZ}(k)$-modules, where $\beta$ has codimension $0$ and cohomological degree $1$.

Next we compute $A_{{\rm O}_n}^{*}(C,\M)$, where $C$ is the vanishing locus of $q$ in $V\smallsetminus\{0\}$. The action of ${\rm O}_n$ on $C$ is transitive: the stabilizer is given by semidirect product of $O_{n-2}$ with a subgroup $H$ of the group of affine transformations of a vector space $W$. More precisely, the group $H$ is isomorphic to $W$ via the natural projection of ${\rm Aff}(V)$ on translations. It follows from \cite[Lemma 1.7]{PirCohHypThree} that:
\begin{equation}\label{eq:AC}
A_{{\rm O}_n}^{*}(C,\M)\simeq A_{O_{n-2}\ltimes H}^*(\Spec(k),\M)\simeq A_{O_{n-2}}^*(\Spec(k),\M)
\end{equation}

Next we compute $A^{*}_{O_2}(\Spec(k),\M)$. First observe that $O_0=\{id\}$ and $O_1\simeq\mu_2$. Consider the pushforward morphism
\begin{equation*}
i_*:A^{*}_{O_2}(C,\M)\simeq \M^{\bullet}(k) \rightarrow A^{*+1}_{O_2}(V\smallsetminus\{0\},\M) 
\end{equation*}
The isomorphism on the left side comes from (\ref{eq:AC}). By the projection formula $i_*\alpha=\alpha\cdot i_*(1)$, and $i_*(1)$ is in degree $0$ and codimension $1$. We have $A^1_{O_2}(V\smallsetminus\{0\},\M)\simeq A^1_{O_2}(V,\M)$ which is in turn isomorphic to $A^1_{O_2}(\Spec(k),\M)$. 

The piece of cohomological degree $0$ of the latter group turns out to be equal to ${\rm CH}^1_{O_2}(\Spec(k))\otimes M^0(k)$, which vanishes because it is of both $2$-torsion and $\ell$-torsion. We deduce that $i_*=0$.

This last remark, applied to the localization exact sequence for $C\subset V\smallsetminus\{0\}$, implies that the following short sequence is exact:
\begin{equation}\label{eq:sesAO2Vminus}
0\to A^{*}_{O_2}(V\smallsetminus\{0\},\M)\to A^{*}_{O_2}(B,\M) \to A^{*}_{O_2}(C,\M) \to 0
\end{equation}
By (\ref{eq:AC}) the group on the right is equal to $\M^{\bullet}(k)$ in codimension $0$ and it is $0$ in higher codimension. By (\ref{eq:AB}), the group in the middle is equal to $A^{*}_{\mu_2}(\Spec(k),\M)\oplus A^{*}_{\mu_2}(\Spec(k),\M)\cdot\beta$ in codimension $0$ and it vanishes in higher codimension, where $\beta$ has cohomological degree $1$.

From (\ref{eq:Amu2}) we know that $A^{*}_{\mu_2}(\Spec(k),\M)$ is trivial, hence the short exact sequence (\ref{eq:sesAO2Vminus}) implies:
\begin{equation}\label{eq:AO2Vminus}
A^{*}_{O_2}(V\smallsetminus\{0\},\M)\simeq \M^{\bullet}(k) 
\end{equation}
Now consider the localization exact sequence
\begin{equation}\label{eq:sesforAO2V}
\cdots\to A^i_{O_2}(V,\M) \to A^i_{O_2}(V\smallsetminus\{0\},\M) \to A^{i-1}_{O_2}(\{0\},\M) \to A^{i+1}_{O_2}(V,\M)\to\cdots
\end{equation}
induced by the open embedding $V\smallsetminus \{0\}\hookrightarrow V$.

Observe that $A^0_{O_2}(V,\M)\simeq A^0_{O_2}(V\smallsetminus\{0\})$ because the codimension of $\{0\}$ in $V$ is $>1$. Moreover, from (\ref{eq:AO2Vminus}) we know that $A^i_{O_2}(V\smallsetminus\{0\},\M)=0$ for $i>0$. Observe also that the pushforward induced by the closed embedding $\{0\}\hookrightarrow V$ coincides with multiplication by $c_2^{O_2}(V)=:c_2$.

Henceforth from the long exact sequence (\ref{eq:sesforAO2V}) we readily deduce:
\begin{itemize}
	\item $A^0_{O_2}(V,\M)\simeq \M^{\bullet}(k)$.
	\item $A^i_{O_2}(V,\M)=0$ for $i$ odd.
	\item $A^{i+2}_{O_2}(V,\M)\simeq c_2\cdot A^i_{O_2}(V,\M)$
\end{itemize}
Therefore:
\begin{equation}\label{eq:AO2}
A^{*}_{O_2}(\Spec(k),\M)\simeq\M^{\bullet}(k)[c_2]\simeq {\rm CH}_{O_2}^*(\Spec(k))\otimes\M^{\bullet}(k)
\end{equation}
Next we prove that the same result holds for $O_3$. As before, we start by computing $A_{O_3}^{*}(V\smallsetminus\{0\},\M)$. 

The triviality of $A^{*}_{\mu_2}(\Spec(k),\M)$ together with (\ref{eq:AC}) implies that $A_{O_3}^{*}(C,\M)\simeq\M^{\bullet}(k)$. The same argument used for $O_2$ shows that the pushforward morphism $i_*:A^{*}_{O_3}(C,\M)\to A^{*}_{O_3}(V\smallsetminus\{0\},\M)$ is zero.

Plugging this information into the localization exact sequence induced by the open embedding $B\hookrightarrow V\smallsetminus\{0\}$, we deduce that $A^i_{O_3}(V\smallsetminus\{0\},\M)\simeq A^i_{O_3}(B,\M)$ for $i>0$ and that the following is exact:
\begin{equation*}
0\to A^0_{O_3}(V\smallsetminus\{0\},\M) \to A^0_{O_3}(B,\M) \to A^0_{O_3}(C,\M)\simeq \M^{\bullet}(k) \to 0
\end{equation*}
By (\ref{eq:AB}) together with (\ref{eq:AO2}) we have:
\[ A_{O_3}^{*}(B,\M)\simeq \M^{\bullet}(k)[c_2] \oplus \M^{\bullet}(k)[c_2] \cdot \beta \]
with $\beta$ of codimension $0$ and cohomological degree $1$. We deduce that
\begin{equation*}
A^{*}_{O_3}(V\smallsetminus\{0\},\M)\simeq \M^{\bullet}(k)[c_2]\oplus\M^{\bullet}(k)[c_2]\cdot\gamma 
\end{equation*}
where $\gamma$ has codimension $2$ and cohomological degree $1$.

Observe that the pushforward morphism $A^{*}_{O_3}(\{0\},\M)\to A^{*+3}_{O_3}(V,\M)$ coincides with multiplication by $c_3(1)$, which is both a $2$-torsion and an $\ell$-torsion element, hence zero. 

Consider now the open embedding $V\smallsetminus\{0\}\hookrightarrow V$. By what we have just proved, the following short sequences are exact for every $i$:
\begin{equation*}
0\to A^i_{O_3}(V,\M) \to A^i_{O_3}(V\smallsetminus\{0\},\M)\to A^{i-2}_{O_3}(\{0\},\M)\to 0
\end{equation*}
We immediately deduce that:
\begin{itemize}
	\item $A^0_{O_3}(V,\M)\simeq \M^{\bullet}(k)$.
	\item $A^i_{O_3}(V,\M)=0$ for $i$ odd.
	\item $A^{2i}_{O_3}(V,\M)\simeq\M^{\bullet}(k)\cdot c_2^i$.
\end{itemize}
Therefore:
\begin{equation}\label{eq:AO3}
A^{*}_{O_3}(\Spec(k),\M)\simeq \M^{\bullet}(k)[c_2]\simeq {\rm CH}^{*}_{O_3}(\Spec(k))\otimes\M^{\bullet}(k)
\end{equation}
which concludes the proof of the Lemma.
\end{proof}

\begin{proof}[Proof of Prop. \ref{prop:ABPGL2 odd}]
Observe that ${\rm PGL}_2\simeq {\rm SO}_3$ and $O_3\simeq {\rm SO}_3\times\mu_2$. The result is then a direct consequence of Corollary \ref{cor:Amun2} and Lemma \ref{lm:AOn}.
\end{proof}

\section{The generalized cohomological invariants of ${\rm S}_n$ and ${\rm PGL}_2$}\label{sec: Sn PGL2}


In this Section we extend some results on classical cohomological invariants from \cite{GMS} to work with coefficients in a general module. We will apply Gille and Hirsch's splitting formula for generalized cohomological invariants \cite{GilHir}*{Thm. 4.8}.

We begin by discussing the relations between the cycle modules ${\rm H}_{\ZZ/\ell\ZZ}=\K_{\ell}$ as $\ell$ varies. For the remainder of this section, by $\eta$ we will always mean a positive integer not divisible by ${\rm char}(k)$. Consider the exact sequence

\[0 \rightarrow \ZZ/\eta\ZZ \rightarrow \ZZ/\ell \eta\ZZ \rightarrow \ZZ/\ell\ZZ \rightarrow 0 \]
We can twist it to get exact sequences 
\[0 \rightarrow \ZZ/\eta\ZZ (i) \rightarrow \ZZ/\ell \eta\ZZ (i) \rightarrow \ZZ/\ell\ZZ (i) \rightarrow 0 \]
In particular, when $i=1$ we retrieve the exact sequence 
\[0\rightarrow \mu_{\eta} \rightarrow \mu_{\ell \eta} \rightarrow \mu_{\ell} \rightarrow 0. \]
Consequently, for each $i$ we have exact sequences
\[ {\rm H}^i(-,\ZZ/\eta\ZZ (i))\rightarrow {\rm H}^i(-,\ZZ/\ell \eta\ZZ (i)) \rightarrow {\rm H}^i(-, \ZZ/\ell\ZZ (i)) \]
these maps form an exact sequence of cycle modules
\[ {\rm H}_{\ZZ/\eta\ZZ} \rightarrow {\rm H}_{\ZZ/\ell \eta\ZZ} \rightarrow {\rm H}_{\ZZ/\ell \ZZ}. \]
Using Voevodsky's norm-residue isomorphism \cite{Voe}*{Thm. 6.1}, we conclude that this is just the exact sequence 
\[ K_{\rm \eta} \rightarrow K_{\rm \eta \ell} \rightarrow K_{\ell} \rightarrow 0\]
In particular, the last map is surjective, and the kernel is just the image of the kernel of $\K_{\rm M} \rightarrow \K_{\ell}$, that is, the ideal $(\ell)$.

\begin{lm}\label{InvImage}
Let $X/k$ be a smooth scheme, and let $\alpha \in \K_{\ell}$. Let $\alpha'$ be an inverse image of $\alpha$ in $\K_{\rm M}^{\bullet}(k(X))$ (or in $\K_{\ell \eta}^{\bullet}(k(X))$. If $\alpha$ is unramified then $\alpha ' \cdot x$ is unramified for any $x \in A^0(X,\K_{\rm M})_{\ell}$ (resp. $A^0(X,\K_{\ell \eta})_{\ell}$). Moreover, the class $\alpha'\cdot x$ does not depend on the choice of $\alpha'$.
\end{lm}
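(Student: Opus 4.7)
The plan is to handle both cases uniformly by exploiting the surjections of cycle modules $\K_{\rm M} \twoheadrightarrow \K_\ell$ and $\K_{\ell\eta} \twoheadrightarrow \K_\ell$ whose kernels are both $\ell \K_{\rm M}$ and $\ell \K_{\ell\eta}$ respectively, as observed in the paragraph preceding the Lemma.

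\emph{Independence of the lift.} If $\alpha'_1$ and $\alpha'_2$ are two inverse images of $\alpha$, their difference lies in the kernel of the relevant surjection, so $\alpha'_1 - \alpha'_2 = \ell \beta$ for some $\beta$ in $\K_{\rm M}^\bullet(k(X))$ (resp. $\K_{\ell\eta}^\bullet(k(X))$). Since $x$ is annihilated by $\ell$ by hypothesis, the projection formula (or the module axiom for cycle modules) gives
\[(\alpha'_1 - \alpha'_2) \cdot x = \ell \beta \cdot x = \beta \cdot \ell x = 0,\]
so in particular $\alpha'_1 \cdot x$ and $\alpha'_2 \cdot x$ agree as elements of the cycle module at the generic point, and thus a fortiori as classes in $A^0(X,-)$.

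\emph{Unramifiedness.} Let $v$ be a codimension one point of $X$ with uniformizer $\pi$, and let $s^{\pi}_v$ denote the corresponding specialization. By the Leibniz rule for cycle modules (Rost, R3), specialized to $\partial_v(x)=0$, we have
\[\partial_v(\alpha' \cdot x) = \partial_v(\alpha') \cdot s^\pi_v(x).\]
Since $\alpha$ is unramified and $\partial_v$ is compatible with the surjection $\K_{\rm M} \to \K_\ell$ (resp. $\K_{\ell\eta} \to \K_\ell$), the image of $\partial_v(\alpha')$ in $\K_\ell$ vanishes, so $\partial_v(\alpha') = \ell \gamma$ for some $\gamma$ in the residue field's K-theory. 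Then
\[\partial_v(\alpha' \cdot x) = \ell \gamma \cdot s^\pi_v(x) = \gamma \cdot \ell\, s^\pi_v(x).\]
The specialization $s^\pi_v$ is a homomorphism of abelian groups, so it sends the $\ell$-torsion element $x$ to an $\ell$-torsion element, and the product vanishes. This shows $\alpha' \cdot x \in A^0(X,\K_{\rm M})$ (resp. $A^0(X,\K_{\ell\eta})$).

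\emph{Main obstacle.} The only subtlety is invoking the correct version of the boundary Leibniz rule, since $\alpha'$ need not itself be unramified — only its reduction $\alpha$ is. Once one uses the form of R3 in which the $\partial_v(x)$ term drops out because $x$ is unramified, the argument reduces to the purely formal fact that a product $(\ell \gamma) \cdot (\text{$\ell$-torsion})$ vanishes. The two cases, $\alpha' \in \K_{\rm M}$ and $\alpha' \in \K_{\ell\eta}$, are handled identically since only the shape of the kernel of the surjection onto $\K_\ell$ is used.
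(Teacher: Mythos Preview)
Your proof is correct and follows essentially the same approach as the paper's: both use that the kernel of $\K_{\rm M}\to\K_\ell$ (resp.\ $\K_{\ell\eta}\to\K_\ell$) is $\ell$-divisible to show $\partial_v(\alpha')\in\ell\cdot\K$ and $(\alpha'_1-\alpha'_2)\in\ell\cdot\K$, then conclude by pairing against the $\ell$-torsion element $x$. Your version is in fact more careful than the paper's, which writes $\partial_v(\alpha'\cdot x)=\partial_v(\alpha')\cdot x$ without making explicit, as you do, that one must apply the specialization $s_v$ to $x$ and invoke the Leibniz rule in the form valid when $\partial_v(x)=0$.
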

\begin{proof}
By the compatibility of the morphism of cycle modules with the boundary map, the ramification of $\alpha'$ at any point $x$ of codimension $1$ has to belong to the kernel of $\K_{\rm M}^{\bullet}(k(x)) \rightarrow \K_{\ell }^{\bullet}(k(x))$, which means it is a multiple of $\ell$, so $\alpha' \cdot x$ is unramified as $\partial_v(\alpha' \cdot x)=\partial_v(\alpha')\cdot x = 0$. By the same reasoning, note that if $\alpha''$ is another inverse image of $\alpha$, the difference $\alpha' - \alpha''$ has to be a multiple of $\ell$, concluding the proof. 
\end{proof}

Using this, we can define a multiplicative action of the invariants with coefficients in $\K_{\ell}$ on the $\ell$-torsion of the invariants with coefficients in any cycle module.

\begin{lm}\label{lm:multiplication}
Let $X/k$ be a smooth scheme endowed with a $G$-action. There is a well defined multiplication:
\[ A^{0}_G(X,\K_{\ell})\otimes A^0_G(X,{\rm M})_\ell \rightarrow A^0_G(X,{\rm M}) \]
which for any $G$-equivariant morphism $f:X\to Y$ of smooth $G$-schemes satisfies the formula $f^*(\alpha\cdot\beta)=f^*\alpha\cdot f^*\beta$.
\end{lm}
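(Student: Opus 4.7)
The idea is to reduce to the non-equivariant case via Totaro--Edidin--Graham approximation, then lift from $\K_{\ell}$ to $\K_{\rm M}$ and use the $\K_{\rm M}$-module structure on $\M$, following the pattern of Lemma \ref{InvImage}. First, by passing to an equivariant approximation $[(X\times U)/G]$ for $U$ the free locus of a sufficiently generic representation of $G$, the problem reduces to the case of a smooth quasi-separated algebraic space. Shrinking further by an open subset whose complement has codimension at least two, I may assume $X$ is a smooth scheme, on which $A^0(X,-)$ coincides with the subgroup of unramified elements of the value at the generic point.

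Given $\alpha\in A^0(X,\K_{\ell})\subseteq \K_{\ell}^{\bullet}(k(X))$ and $\beta\in A^0(X,\M)_{\ell}\subseteq \M^{\bullet}(k(X))$, I pick any lift $\widetilde\alpha\in \K_{\rm M}^{\bullet}(k(X))$ of $\alpha$ under the surjection $\K_{\rm M}\twoheadrightarrow \K_{\ell}$, and define
\[ \alpha\cdot\beta := \widetilde\alpha\cdot\beta \]
using the $\K_{\rm M}$-module structure on $\M$. Three checks are then required, together with the verification that the construction descends to equivariant approximations.

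\emph{Independence from the lift.} Two lifts of $\alpha$ differ by an element of the form $\ell\gamma$, and $\ell\gamma\cdot\beta = \gamma\cdot(\ell\beta) = 0$ because $\beta$ is $\ell$-torsion. This is the same argument used in Lemma \ref{InvImage}. \emph{Unramifiedness of $\widetilde\alpha\cdot\beta$.} For each $v\in X^{(1)}$, the Leibniz-type rules for the boundary $\partial_v$ on a product of a $\K_{\rm M}$-element and a module-element (Rost's axioms R3 in \cite{Rost}*{Sec.~1--2}) express $\partial_v(\widetilde\alpha\cdot\beta)$ as a sum of terms each containing a factor $\partial_v(\widetilde\alpha)$, $s_v^{\pi}(\widetilde\alpha)\cdot\partial_v(\beta)$, or $\partial_v(\beta)$. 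Since $\beta$ is unramified, every term involving $\partial_v(\beta)$ dies. For the remaining terms, $\partial_v(\widetilde\alpha)$ reduces to $\partial_v(\alpha)=0$ in $\K_{\ell}$, hence $\partial_v(\widetilde\alpha)\in\ell\,\K_{\rm M}^{\bullet}(k(v))$; combined with the fact that any specialization of $\beta$ is still $\ell$-torsion (because $\ell\beta=0$ globally), these contributions also vanish. \emph{Pullback compatibility.} Pullbacks commute both with the surjection $\K_{\rm M}\twoheadrightarrow \K_{\ell}$ and with the module multiplication $\K_{\rm M}\otimes\M\to\M$, so $f^{*}\widetilde\alpha$ is a valid lift of $f^{*}\alpha$, whence $f^{*}(\widetilde\alpha\cdot\beta)=f^{*}\widetilde\alpha\cdot f^{*}\beta$.

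The main obstacle I anticipate is the careful verification of the boundary computation with the correct signs and specialization maps $s_v^{\pi}$, since the absence of a ring structure on $\M$ forces every unwanted term to be absorbed by the $\ell$-torsion hypothesis on $\beta$ alone. Once this is established, the pairing extends bilinearly, and a standard double-fibration argument on approximations shows it is independent of the chosen representation of $G$, giving the desired equivariant multiplication and its compatibility with equivariant pullbacks.
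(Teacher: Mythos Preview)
Your proposal is correct and follows essentially the same approach as the paper: reduce to the non-equivariant case by equivariant approximation, lift $\alpha$ from $\K_{\ell}$ to $\K_{\rm M}$, define the product via the $\K_{\rm M}$-module structure on $\M$, and check well-definedness and unramifiedness using that the ramification of the lift lies in $\ell\,\K_{\rm M}$ while $\beta$ is $\ell$-torsion. The paper's proof is terser---it simply invokes Lemma~\ref{InvImage} for the boundary computation rather than spelling out the Leibniz rule as you do---but the argument is the same; your extra care about the $s_v^{\pi}$ terms and the reduction from algebraic spaces to schemes is sound and not a different route.
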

\begin{proof}
For the sake of simplicity, we give a proof in the non-equivariant setting. The general case can be deduced in the same way using equivariant approximation.

Let $\alpha$ be an element of $A^0(X,\K_{\ell})\subset \K_{\ell}^{\bullet}(k(X))$ and let $\beta$ be an element of $A^0_G(X,{\rm M})_\ell$. For every inverse image $\alpha'$ of $\alpha$ in $\K_{\rm M}^{\bullet}(k(X))$, we have a well defined product $\alpha'\cdot\beta$ in ${\rm M}^{\bullet}(k(X))$: this product is unramified on $X$ because the ramification of $\alpha'$ at any point of codimension $1$ is a multiple of $\ell$, just as in the proof of the Lemma above.

By the same reasoning, the definition of the product does not depend on the choice of an inverse image $\alpha'$. The compatibility with the pullback follows from the compatibility of the product $A^0(X,\K_{\rm M})\otimes A^0(X,{\rm M}) \to A^0(X,{\rm M})$.
\end{proof}

This allows us to easily describe the cohomological invariants of $\mu_\ell^r$:

\begin{lm}\label{lm:Inv mu2r}
The cohomological invariants of ${\rm B}(\mu_\ell^r)$ with coefficients in ${\rm M}$ are given by
\[\Inv(\Brm (\mu_\ell^r),{\rm M}) \simeq {\rm M}^{\bullet}(k) \oplus\left(\mathop{\oplus}_{J \subseteq \left[ r\right] }\alpha_J \cdot {\rm M}^{\bullet}(k)_\ell \right) \]
Where $\alpha_{\lbrace j \rbrace}$ is the pullback of the identity invariant through the projection \[{\rm Pr}_{j}:\mu_\ell^r \rightarrow \mu_\ell\] and if $J= \lbrace j_1, \ldots, j_m \rbrace$ then $\alpha_{J} = \alpha_{\lbrace j_1 \rbrace} \ldots \alpha_{\lbrace j_m \rbrace}$.
\end{lm}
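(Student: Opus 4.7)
The plan is to proceed by induction on $r$. The base case $r=1$ is just Proposition \ref{prop:Amun} specialized to codimension zero: we have
\[A^0_{\mu_\ell}(\Spec(k),\M)=\M^\bullet(k)\oplus\alpha\cdot\M^\bullet(k)_\ell,\]
and the only nonempty subset of $[1]$ is $\{1\}$, so setting $\alpha_{\{1\}}=\alpha$ matches the statement.

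For the inductive step, I would identify $\Brm(\mu_\ell^r)\simeq\Brm(\mu_\ell^{r-1})\times\Brm\mu_\ell$, pick a smooth Edidin--Graham--Totaro approximation $X$ of $\Brm(\mu_\ell^{r-1})$ and consider the trivial action of a new copy of $\mu_\ell$ on $X$. By the results of Section \ref{sec:equiv}, $A^0_{\mu_\ell}(X,\M)$ computes $\Inv(\Brm(\mu_\ell^r),\M)$. Applying Corollary \ref{cor:Amun2} to this trivial action gives
\[A^0_{\mu_\ell}(X,\M)=A^0(X,\M)\oplus\alpha\cdot A^0(X,\M)_\ell,\]
where $\alpha$ is the degree one invariant coming from the last factor of $\mu_\ell$. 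The inductive hypothesis says that
\[A^0(X,\M)=\M^\bullet(k)\oplus\bigoplus_{\emptyset\neq J\subseteq[r-1]}\alpha_J\cdot\M^\bullet(k)_\ell,\]
and every non-unit summand is already $\ell$-torsion, so
\[A^0(X,\M)_\ell=\M^\bullet(k)_\ell\oplus\bigoplus_{\emptyset\neq J\subseteq[r-1]}\alpha_J\cdot\M^\bullet(k)_\ell.\]
Multiplying through by $\alpha=\alpha_{\{r\}}$ and relabelling $\alpha_{\{r\}}\cdot\alpha_J$ as $\alpha_{J\cup\{r\}}$ produces exactly the summands indexed by the nonempty subsets of $[r]$ that contain $r$; together with the summands coming from $A^0(X,\M)$ (the subsets of $[r]$ not containing $r$), one gets the desired direct sum decomposition.

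The one subtle point in carrying out the above is making sense of the products $\alpha_{\{r\}}\cdot\alpha_J$: the classes $\alpha_{\{j\}}$ live in $A^0(\Brm\mu_\ell^r,\K_\ell)$ while the coefficients we multiply by live in $\M^\bullet(k)_\ell$, so neither side has a natural ring structure. This is precisely the role of Lemma \ref{lm:multiplication}, which defines a well-behaved pairing $A^0_G(X,\K_\ell)\otimes A^0_G(X,\M)_\ell\to A^0_G(X,\M)$ compatible with pullback. Using this pairing iteratively gives a consistent meaning to $\alpha_J=\alpha_{\{j_1\}}\cdots\alpha_{\{j_m\}}\cdot(-)$, and its pullback compatibility guarantees that the generators produced by the induction match the ones in the statement (pullbacks of the identity invariant along the $r$ projections).

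I expect the main obstacle, such as it is, to be bookkeeping: verifying that the decomposition produced in the inductive step is indeed independent of the order in which the factors of $\mu_\ell$ are peeled off, and that the multiplication defined by Lemma \ref{lm:multiplication} is symmetric and associative enough that $\alpha_J$ is well defined irrespective of the order of $j_1,\dots,j_m$. Both facts follow from the graded-commutativity of the product on $A^*(-,\K_\ell)$ together with the compatibility of the pairing with pullback, but they need to be spelled out to make the inductive identification rigorous.
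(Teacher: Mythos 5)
Your proposal is correct and is exactly the intended argument: the paper's proof consists of the single sentence that the Lemma is ``an immediate consequence of Proposition \ref{prop:Amun} and Corollary \ref{cor:Amun2}'', and your induction on $r$ (base case from Proposition \ref{prop:Amun}, inductive step from Corollary \ref{cor:Amun2} applied to the trivial $\mu_\ell$-action on an approximation of ${\rm B}(\mu_\ell^{r-1})$, with the products of generators made rigorous via Lemma \ref{lm:multiplication}) is the natural unpacking of that sentence. The bookkeeping points you flag (well-definedness of $\alpha_J$ up to sign, associativity of the pairing) are real but harmless, since graded-commutativity only affects the sign of the generators and not the submodules $\alpha_J\cdot\M^{\bullet}(k)_\ell$ appearing in the decomposition.
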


In particular, if ${\rm M}$ has no $\ell$-torsion, the invariants of $\mu_\ell^r$ with coefficients in ${\rm M}$ are all trivial.
\begin{proof}
This is an immediate consequence of Proposition \ref{prop:Amun} and Corollary \ref{cor:Amun2}. 
\end{proof}

Now, let $U_n \subset {\rm GL}_n$ be the subscheme of symmetric matrices. Consider the commutative diagram:

\[ \xymatrix{
			\Gm^n \ar[r]^{i} \ar[d] & U_n \ar[d] \\
			{\rm B}\mu_2^n \ar[r] & {\rm BO}_n } \]
			
The vertical maps are given respectively by the quotient by $\Gm^n$ acting on itself with weight two and the quotient by ${\rm GL}_n$ acting by $(A, S) \rightarrow A^{\rm T} S A$. In particular we can see the action of $\Gm^n$ on itself as the subgroup of diagonal matrices of ${\rm GL}_n$ acting on $\Gm^n \subset U_n$. The bottom map comes from the inclusion of the diagonal matrices with coefficients $\pm 1$ into ${\rm O}_n$. Note that both vertical maps are quotients by special groups and thus smooth-Nisnevich.

It's a well known fact that in characteristic different from two every symmetric matrix is equivalent to a diagonal matrix under the action of ${\rm GL}_n$. An immediate consequence of this fact is that the map from $\Gm^n$, and consequently the map from ${\rm B}\mu_2^n$, to ${\rm BO}_n$ are smooth-Nisnevich.

\begin{prop}\label{prop:Inv On}
Any nontrivial cohomological invariant of ${\rm O}_n$ is of $2$-torsion. We have
\[ \Inv({\rm BO}_n,{\rm M})={\rm M}^{\bullet}(k) \oplus \alpha_1 \cdot {\rm M}^{\bullet}(k)_2 \oplus \ldots \oplus \alpha_n \cdot {\rm M}^{\bullet}(k)_2\]
where $\alpha_i$ is the $i$-th symmetric function in $\alpha_{\lbrace 1 \rbrace}, \ldots, \alpha_{\lbrace n \rbrace} \in \Inv(\mu_2^n,\K_{2})$.
\end{prop}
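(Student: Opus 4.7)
The plan is to transfer the computation from ${\rm B}\mu_2^n$, whose invariants were just described in Lemma \ref{lm:Inv mu2r}, to ${\rm BO}_n$ via the smooth-Nisnevich morphism $\pi\colon {\rm B}\mu_2^n \to {\rm BO}_n$ constructed in the paragraph above the statement. Since $\Inv(-,\M)$ is a smooth-Nisnevich sheaf (Theorem \ref{thm:M sheaf}), the pullback
\[ \pi^*\colon \Inv({\rm BO}_n,\M) \longrightarrow \Inv({\rm B}\mu_2^n,\M) \]
is injective. By Lemma \ref{lm:Inv mu2r}, every nontrivial element of the target is of $2$-torsion, so the same holds for the source; this already gives the first assertion of the proposition.

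Next I would identify the image of $\pi^*$. The key remark is that the permutation matrices sit inside ${\rm O}_n$ and conjugate the diagonal subgroup $\mu_2^n$ by the coordinate permutation action of $S_n$. Since inner automorphisms of a group act as the identity on the invariants of its classifying stack, the image of $\pi^*$ lies in the $S_n$-invariant submodule of $\Inv({\rm B}\mu_2^n,\M)$. Grouping the basis elements $\alpha_J$ from Lemma \ref{lm:Inv mu2r} by the cardinality of $J$, the $S_n$-invariants are precisely
\[ \M^{\bullet}(k) \oplus \alpha_1 \cdot \M^{\bullet}(k)_2 \oplus \ldots \oplus \alpha_n \cdot \M^{\bullet}(k)_2, \]
where $\alpha_i$ is the $i$-th elementary symmetric polynomial in the $\alpha_{\{j\}}$.

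Finally, I would produce genuine preimages of the $\alpha_i$ in $\Inv({\rm BO}_n,\K_2)$. The cleanest way is to invoke the classical Stiefel--Whitney invariants $w_i \in \Inv({\rm BO}_n,\K_2)$ of quadratic forms: on a diagonalised form $\langle a_1,\ldots,a_n\rangle$ one has $w_i = e_i((a_1),\ldots,(a_n))$ in ${\rm H}^i(-,\mu_2^{\otimes i})$, so $\pi^* w_i = \alpha_i$. Combining these with the multiplicative pairing of Lemma \ref{lm:multiplication}, each $\alpha_i \cdot \M^\bullet(k)_2$ is realised inside $\Inv({\rm BO}_n,\M)$, and the injectivity of $\pi^*$ guarantees that the resulting sum is direct and exhausts the $S_n$-invariant submodule.

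The main obstacle is the last step: the injectivity and the $2$-torsion assertion are formal consequences of the smooth-Nisnevich sheaf property and Lemma \ref{lm:Inv mu2r}, but producing the $\alpha_i$ as honest ${\rm O}_n$-invariants requires either input from classical Stiefel--Whitney theory, or the more intrinsic alternative of promoting the diagram above the statement to a smooth-Nisnevich cover ${\rm B}(\mu_2^n \rtimes S_n) \to {\rm BO}_n$ by signed permutation matrices and then running a Galois descent argument along the $S_n$-quotient ${\rm B}\mu_2^n \to {\rm B}(\mu_2^n \rtimes S_n)$.
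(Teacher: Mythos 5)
Your proposal is correct and takes essentially the same route as the paper: injectivity of the pullback to ${\rm B}\mu_2^n$ gives the $2$-torsion claim and the containment of the image in the $S_n$-invariant submodule, and surjectivity is obtained by pairing the classical Stiefel--Whitney invariants (the $\K_2$ case, which the paper quotes from Section 23 of Garibaldi--Merkurjev--Serre) with $\M^{\bullet}(k)_2$ via the multiplication of Lemma \ref{lm:multiplication}. The only cosmetic difference is that the paper packages the last step as a commutative square comparing the products for ${\rm BO}_n$ and ${\rm B}\mu_2^n$ rather than naming the $w_i$ explicitly.
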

\begin{proof}
The fact that all nontrivial invariants are of $2$-torsion is an obvious consequence of the inclusion $\Inv(\Brm {\rm O}_n,\M) \subset \Inv(\Brm \mu_2^n,\M)$. Note that moreover we have $\Inv({\rm BO}_n,{\rm M}) \subset \Inv(\Brm\mu_2^n, {\rm M})^{{\rm S}_n}$ by the gluing conditions, and $\Inv(\Brm\mu_2^n, {\rm M})^{{\rm S}_n}$ is exactly the group in our formula, so all we have to do is prove surjectivity. 

The description of the cohomological invariants with coefficients in a $\K_2$ is obtained in \cite[Sec. 23]{GMS}; the general description will follow by considering the product map $\Inv({\rm BO}_n,\K_2)_2 \otimes {\rm M}(k)_2 \to \Inv({\rm BO}_n, {\rm M})$. We have a commutative square

\[ \xymatrix{
			\Inv({\rm BO}_n,\K_2) \otimes {\rm M}(k)_2, \ar[r]^{\cdot} \ar[d] & \Inv({\rm BO}_n, {\rm M})_2 \ar[d] \\
		 \Inv(\Brm\mu_2^n, \K_2)^{{\rm S}_n} \otimes {\rm M}(k)_2 \ar[r]^{\cdot} &  \Inv(\Brm\mu_2^n, {\rm M})^{{\rm S}_n}_2 }
		\]
The bottom horizontal map and the vertical map on the left are surjective, proving our claim.
\end{proof}

\begin{rmk}
In \cite{Guil}*{Sec. 6}, Guillot claims that there are no Stiefiel-Whitney classes with coefficients in $\K_{\rm M}$, but he is only computing the degree $1$ component. An example of such a class in degree $2$ is 
\[\alpha_1 \lbrace -1 \rbrace \in {\rm Inv}^2({\rm BO}_n, \K_{\rm M}) 
\]
when $k$ does not contain a square root of $-1$.
\end{rmk}

\begin{thm}[Gille, Hirsch]\label{thm:split}
Let $(V,b) $ be a finite dimensional, regular symmetric bilinear space over $k$ and let $W \subset {\rm O}(V,b)$ be a finite subgroup such that $S(V)^{W}$ is a polynomial ring over $k$. Let $G_1,\ldots, G_r$ be different maximal abelian subgroups generated by reflections, representing any such subgroup up to conjugation. 
Then the pullback
\[\Inv({\rm B}(W),{\rm M}) \to \prod_{i} \Inv({\rm B}(G_i),{\rm M})^{N_{W}(G_i)} \]
is injective.
\end{thm}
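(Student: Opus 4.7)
The plan is to adapt Serre's classical splitting principle for cohomological invariants of reflection groups (\cite{GMS}*{Thm. 25.13}) to Rost's cycle modules, broadly following the strategy of \cite{GilHir}. Since $S(V)^W$ is a polynomial ring, the Chevalley--Shephard--Todd theorem implies $W$ is generated by reflections and $V/W \simeq \bA^{\dim V}$ is smooth. Via equivariant approximation, $\Inv(\Brm W,\M)$ is identified with $A^0_W(V,\M)$; the generic-value map embeds this into $\M^{\bullet}(k(V))^W$, and the unramifiedness condition there amounts to the vanishing of all codimension-one residues.

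First I would set up a residue analysis at the reflection hyperplanes. Let $H_1,\ldots,H_s$ be representatives for the $W$-orbits of reflection hyperplanes, with stabilizing reflections $r_j$, and let $V^{\circ}$ be the complement of $\bigcup_j W\cdot H_j$, on which $W$ acts freely. The equivariant localization sequence from Proposition \ref{prop:properties} yields
\[ 0 \to A^0_W(V,\M) \to A^0_W(V^{\circ},\M) \xrightarrow{\partial} \bigoplus_j A^0_{N_W(\langle r_j\rangle)}(H_j^{\circ},\M), \]
where $H_j^{\circ}\subset H_j$ is the locus on which the $W$-stabilizer equals $\langle r_j\rangle$. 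For each $j$, fix a maximal abelian reflection subgroup $G_{i(j)}$ containing $r_j$. The inclusion of the $\langle r_j\rangle$-fixed subspace induces a morphism of classifying stacks factoring through $\Brm G_{i(j)}$, and the key lemma to establish is that the $j$-th residue $\partial_j\alpha$ is controlled by (and in particular vanishes with) the pullback of $\alpha$ to $\Inv(\Brm G_{i(j)},\M)^{N_W(G_{i(j)})}$.

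Granting the residue lemma, any $\alpha$ in the kernel of the product restriction map has all of its Rost residues equal to zero, so by the localization sequence $\alpha$ actually lies in $A^0_W(V,\M)$ and is unramified on all of $V$. I would conclude by induction on $|W|$: restricting to the fixed subspace of a subgroup of one of the $G_i$ reduces the problem to a strictly smaller reflection group with the same polynomial-invariant property (by Steinberg's fixed-subspace theorem), until the trivial base case $W=\{1\}$ is reached, where both sides equal $\M^{\bullet}(k)$.

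The main obstacle is the residue lemma itself. Rost's boundary map is defined through tame symbols in Milnor K-theory, and one must identify its output with the geometric restriction to the fixed subspace of a reflection, then track how $\langle r_j\rangle$ sits inside $G_{i(j)}$ as a direct factor up to conjugation (here using that $G_{i(j)}$ is a product of copies of $\mu_2$). The appearance of $N_W(G_i)$-invariants on the target accounts precisely for grouping together hyperplanes in a single $W$-orbit that are stabilized by conjugate reflections inside $G_{i(j)}$, and handling this Weyl-type action correctly is the delicate bookkeeping carried out in \cite{GilHir}*{Sec. 4}.
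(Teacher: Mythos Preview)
The paper does not supply a proof: the theorem is quoted from Gille and Hirsch, and the entire argument in the paper is the citation \cite{GilHir}*{Thm.~4.8, Cor.~4.12}. Your proposal is therefore not competing with an argument in this paper but attempting to reconstruct the cited one, and while you have assembled the right ingredients, the logical flow contains a genuine gap.

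You identify $\Inv(\Brm W,\M)$ with $A^0_W(V,\M)$ at the outset, then write the \emph{equivariant} localization sequence for $V^{\circ}\subset V$ and conclude from your residue lemma that ``$\alpha$ actually lies in $A^0_W(V,\M)$ and is unramified on all of $V$.'' But this is vacuous: $\alpha$ was already in $A^0_W(V,\M)$ by hypothesis, so it already has zero image under your boundary map $\partial$, and the sequence you wrote gives no information whatsoever about $\alpha$. The argument has to run on the \emph{quotient}. Since $W$ acts freely on $V^{\circ}$, one has $A^0_W(V^{\circ},\M)=A^0(V^{\circ}/W,\M)$, and $V^{\circ}/W$ is open in $V/W\simeq\bA^{\dim V}$. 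The residues that matter are those of $\alpha\in\M^{\bullet}(k(V)^W)=\M^{\bullet}(k(V/W))$ along the branch divisors in $V/W$, namely the images of the reflection hyperplanes; the residue at the image of $H_j$ is controlled by the restriction of $\alpha$ to $\Brm\langle r_j\rangle$, which vanishes because $\langle r_j\rangle\subset G_{i(j)}$ and the restriction to $\Brm G_{i(j)}$ is zero. Once these residues vanish, $\alpha$ extends to $A^0(\bA^{\dim V},\M)=\M^{\bullet}(k)$ and is therefore constant, and a constant invariant with zero restriction to any $\Brm G_i$ is zero. Your proposed induction on $|W|$ via Steinberg's fixed-subspace theorem is then unnecessary; it does not rescue the argument as written, because the step preceding it is circular.
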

\begin{proof}
This is a combination of \cite{GilHir}*{Thm. 4.8, Cor. 4.12}.
\end{proof}

\begin{rmk}
Gille and Hirsch also prove the same statement when $W$ is a Weyl group and the characteristic of $k$ is not a torsion prime for its root system. Moreover, they also prove the splitting principle for invariants with coefficients in Witt groups, which do not form a cycle module but have strong enough properties that the (classical) theory of cohomological invariants still works.
\end{rmk}

\begin{cor}
Let $W$ be as above. Than any non-constant cohomological invariant of ${\rm B}W$ is of $2$-torsion, i.e. \[2\cdot \left( \Inv({\rm B}(W), {\rm M})/{\rm M}^{\bullet}(k)\right)=0.\]
\end{cor}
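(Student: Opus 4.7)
The plan is to apply the splitting principle of Theorem \ref{thm:split} and reduce the question to computing invariants of the maximal abelian reflection subgroups $G_i$, which we will identify with products of copies of $\mu_2$.

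First, I would observe that in characteristic different from $2$, every reflection in ${\rm O}(V,b)$ has order $2$, so any finite abelian subgroup of ${\rm O}(V,b)$ generated by reflections is an elementary abelian $2$-group. In particular each $G_i$ in Theorem \ref{thm:split} is isomorphic to $\mu_2^{r_i}$ for some $r_i$. By Lemma \ref{lm:Inv mu2r}, we therefore have a decomposition
\[
\Inv({\rm B}(G_i),{\rm M}) = {\rm M}^{\bullet}(k) \oplus T_i,
\]
where the second summand $T_i$ is a direct sum of copies of ${\rm M}^{\bullet}(k)_2$ and is hence of $2$-torsion. Taking $N_W(G_i)$-invariants preserves this decomposition, and consequently each factor on the right hand side of the pullback in Theorem \ref{thm:split} is of the form ${\rm M}^{\bullet}(k)\oplus (\text{$2$-torsion})$.

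Next, given $\alpha \in \Inv({\rm B}W,{\rm M})$, I would write $\alpha|_{G_i} = c_i + \beta_i$ with $c_i \in {\rm M}^{\bullet}(k)$ and $\beta_i$ of $2$-torsion. The key observation is that the constant $c_i$ is intrinsic to $\alpha$: factoring the map $\Spec(k) \to {\rm B}(G_i) \to {\rm B}(W)$ corresponding to the trivial torsor, the constant $c_i$ equals the value of $\alpha$ at the trivial $W$-torsor, namely $c:=\alpha(\Spec(k))$. Hence $c_i = c$ for all $i$, and $(\alpha - c)|_{G_i} = \beta_i$ is of $2$-torsion in each factor.

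Therefore $2(\alpha - c)$ pulls back to zero in $\prod_i \Inv({\rm B}(G_i),{\rm M})^{N_W(G_i)}$, and by injectivity of the pullback given by Theorem \ref{thm:split}, we conclude $2\alpha = 2c \in {\rm M}^{\bullet}(k)$. This shows that $2 \cdot(\Inv({\rm B}W,{\rm M})/{\rm M}^{\bullet}(k)) = 0$, which is the claim. There is essentially no hard step: the only subtlety is making sure that the constant parts agree across all factors, which is handled by the naturality argument involving the trivial torsor.
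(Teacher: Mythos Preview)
Your proof is correct and follows the same approach as the paper: combine Theorem \ref{thm:split} with Lemma \ref{lm:Inv mu2r}, using that reflections in ${\rm O}(V,b)$ have order $2$ so each $G_i\simeq\mu_2^{r_i}$. The paper's proof is a single line (``immediate from Lemma \ref{lm:Inv mu2r}''), whereas you spell out the details, including the observation that the constant part $c_i$ agrees across all $i$ via evaluation at the trivial torsor---a point the paper leaves implicit.
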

\begin{proof}
This is immediate from Lemma \ref{lm:Inv mu2r}.
\end{proof}

\begin{prop}\label{prop:Inv Sn and PGL2}
We have:
\begin{enumerate}
	\item $\Inv(\Brm{\rm S}_n,{\rm M}) = {\rm M}^{\bullet}(k) \oplus \alpha_1\cdot {\rm M}^{\bullet}(k)_2 \oplus \ldots \oplus \alpha_{\left[ n/2 \right]} \cdot {\rm M}^{\bullet}(k)_2 $.
	\item $\Inv(\Brm {\rm PGL}_2,\M) = {\rm M}^{\bullet}(k) \oplus w_{2}\cdot {\rm M}^{\bullet}(k)_2 $.
\end{enumerate}
In particular, all non-trivial invariants are of $2$-torsion.
\end{prop}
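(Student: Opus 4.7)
The plan is to reduce both statements to the classical case of coefficients in $\K_2$ and then extend to a general cycle module $\M$ via the multiplication action of Lemma \ref{lm:multiplication}. The key input is that, in both cases, every non-constant invariant will turn out to be $2$-torsion, so Lemma \ref{lm:multiplication} provides a pairing with $\M^{\bullet}(k)_2$ that generates the full module of invariants from the $\K_2$-invariants.

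For part $(1)$, the first step is to apply the Gille--Hirsch splitting principle (Theorem \ref{thm:split}) to the permutation representation of ${\rm S}_n$, whose reflections are transpositions. The maximal abelian subgroups generated by reflections are generated by $[n/2]$ pairwise disjoint transpositions, giving $G = \mu_2^{[n/2]}$. Combined with Lemma \ref{lm:Inv mu2r}, the resulting injection $\Inv(\Brm {\rm S}_n, \M) \hookrightarrow \Inv(\Brm G, \M)^{N_{{\rm S}_n}(G)}$ forces every non-constant invariant to be $2$-torsion. The generators $\alpha_i$ come by pulling back the symmetric Stiefel--Whitney classes from $\Inv(\Brm {\rm O}_n, \M)$ of Proposition \ref{prop:Inv On} along the embedding ${\rm S}_n \hookrightarrow {\rm O}_n$. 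To show that $\alpha_i$ for $i > [n/2]$ is redundant, restrict to $G$: since each transposition in ${\rm S}_n$ is conjugate in ${\rm O}_n$ to a diagonal reflection occupying a single slot, only $[n/2]$ of the $n$ diagonal characters $\alpha_{\{1\}},\ldots,\alpha_{\{n\}}$ appear nontrivially, and the elementary symmetric function $\alpha_i$ vanishes on $G$ for $i > [n/2]$; the splitting injectivity then gives the vanishing in $\Inv(\Brm {\rm S}_n, \M)$.

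For part $(2)$, split $\M = \M_{\rm odd} \oplus \M_2$ via Proposition \ref{prop: l=rs}. The odd-torsion piece is immediate from Proposition \ref{prop:ABPGL2 odd}, which in codimension zero gives $\Inv(\Brm {\rm PGL}_2, \M_{\rm odd}) = \M_{\rm odd}^{\bullet}(k)$. For the $2$-torsion piece, combine the isomorphism ${\rm PGL}_2 \simeq {\rm SO}_3$ with the direct product decomposition ${\rm O}_3 \simeq {\rm SO}_3 \times \mu_2$ (valid in characteristic $\neq 2$ since $-I_3$ has determinant $-1$ and is central). Since the central $\mu_2$ acts trivially on $\Brm {\rm SO}_3$, Corollary \ref{cor:Amun2} yields the decomposition
\[\Inv(\Brm {\rm O}_3, \M) = \Inv(\Brm {\rm SO}_3, \M) \oplus \alpha \cdot \Inv(\Brm {\rm SO}_3, \M)_2,\]
where $\alpha \in \Inv^1(\Brm \mu_2, \K_2)$ is the generator and the product is defined by Lemma \ref{lm:multiplication}. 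Comparing this with $\Inv(\Brm {\rm O}_3, \M) = \M^{\bullet}(k) \oplus \alpha_1 \cdot \M^{\bullet}(k)_2 \oplus \alpha_2 \cdot \M^{\bullet}(k)_2 \oplus \alpha_3 \cdot \M^{\bullet}(k)_2$ from Proposition \ref{prop:Inv On} forces $\Inv(\Brm {\rm SO}_3, \M) = \M^{\bullet}(k) \oplus w_2 \cdot \M^{\bullet}(k)_2$, with $w_2$ being the restriction of $\alpha_2$ to ${\rm SO}_3$.

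The main obstacle is the combinatorial verification in part $(1)$ that, up to the action of the normalizer, the pullback of $\alpha_i$ for $i > [n/2]$ from ${\rm O}_n$ to $\mu_2^{[n/2]}$ is expressible in terms of the lower symmetric functions (and in fact vanishes), since this relies on tracking the conjugacy between the combinatorial transpositions of ${\rm S}_n$ and the diagonal reflections of ${\rm O}_n$. The extension from $\K_2$ to general $\M$ is comparatively formal, given Lemma \ref{lm:multiplication} and the identification of the $2$-torsion structure via Proposition \ref{prop:Inv On}.
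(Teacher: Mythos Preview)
Your approach is essentially the same as the paper's: both use the Gille--Hirsch splitting principle for injectivity into $\Inv(\Brm\mu_2^{[n/2]},\M)^{{\rm S}_{[n/2]}}$, establish surjectivity for $\K_2$ coefficients, and then extend to general $\M$ via the multiplication pairing of Lemma~\ref{lm:multiplication}. For part~(2), both arguments use ${\rm PGL}_2\simeq{\rm SO}_3$, the decomposition ${\rm O}_3\simeq{\rm SO}_3\times\mu_2$, and comparison with Proposition~\ref{prop:Inv On}; your explicit splitting $\M=\M_{\rm odd}\oplus\M_2$ and appeal to Proposition~\ref{prop:ABPGL2 odd} is unnecessary (Corollary~\ref{cor:Amun2} already handles all torsion at once) but not wrong.

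There is one genuine imprecision in part~(1). Your claim that each transposition of ${\rm S}_n$ is conjugate \emph{in ${\rm O}_n$} to a diagonal reflection is false over a general field: the transposition $(1\,2)$ is the reflection in $e_1-e_2$, whose norm is $2$, while a diagonal reflection has norm $1$; these are ${\rm O}_n$-conjugate only when $2\in (k^*)^2$. Consequently the restriction of $\alpha_i$ to $\mu_2^{[n/2]}\subset{\rm S}_n$ is not literally the $i$-th elementary symmetric function in the $\gamma_j$ but rather that plus lower-degree terms multiplied by powers of $\{2\}$. The conclusion you want still holds---the restrictions of $\alpha_1,\ldots,\alpha_{[n/2]}$ are related to the elementary symmetric functions by a unitriangular change of basis over $\K_2^\bullet(k)$, so they span the same module, and the $\alpha_i$ for $i>[n/2]$ become $\K_2^\bullet(k)$-combinations of the lower ones---but the verification is exactly the trace-form computation carried out in \cite{GMS}*{Thm.~25.6}, which is what the paper cites directly. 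So your route through ${\rm O}_n$ and the paper's direct citation converge at the same point in the literature; you have correctly identified this step as ``the main obstacle,'' but the conjugacy heuristic you give for it does not survive over general $k$.
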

\begin{proof}
	We start by proving (1).
	
	Let $H\simeq (\ZZ/2\ZZ) ^{\times m}$ be the subgroup of ${\rm S}_n$ generated by the transpositions $(1,2)$, $(3,4),\ldots, (n-1,n)$ for $n$ even, and by $(1,2),\ldots, (n-2,n-1)$ for $n$ odd.
	We know from Lemma \ref{lm:Inv mu2r} that:
	\[ \Inv(\Brm H,\M)\simeq \M^{\bullet}(k)\oplus \M^{\bullet}(k)_2\cdot \gamma_1 \oplus \cdots \M^{\bullet}(k)_2\cdot \gamma_m \]
	Observe that $S_m$ acts on $\Inv(\Brm H,\M)$ by permuting $\gamma_1,\dots,\gamma_m$.
	By Theorem \ref{thm:split}, the pullback of invariants along the morphism $\Brm H\to \Brm {\rm S}_n$ induces an injective morphism
	\begin{equation}\label{eq:pullback map}
	 \Inv(\Brm {\rm S}_n,\M)\longrightarrow \Inv(\Brm H,\M)^{S_m}
	\end{equation}
	The group on the right hand side is equal to
	\[ \M^{\bullet}(k)\oplus \M^{\bullet}(k)_2\cdot \alpha_1 \oplus \cdots \oplus \M^{\bullet}(k)_2 \cdot \alpha_m \]
	where $\alpha_i$ is the elementary symmetric polynomial of degree $i$ in the $\gamma_1,\dots,\gamma_m$.
	
	The surjectivity of (\ref{eq:pullback map}) is proved in \cite[Thm. 25.6.(1)]{GMS} for $\K_2$. We want to extend this result to the general case. As in the case of ${\rm O}_n$ we have a commutative square
	
\[ \xymatrix{
			\Inv({\rm BS}_n,\K_2) \otimes {\rm M}(k)_2, \ar[r]^{\cdot} \ar[d] & \Inv({\rm BS}_n, {\rm M})_2 \ar[d] \\
		 \Inv(\Brm\mu_2^m, \K_2)^{{\rm S}_m} \otimes {\rm M}(k)_2 \ar[r]^{\cdot} &  \Inv(\Brm\mu_2^m, {\rm M})^{{\rm S}_m}_2 }
		\]	
		
Again, the left vertical map and lower horizontal map are surjective, proving our claim.		
	
	
	
	
To prove the second statement, note that in characteristic different from two we have an isomorphism ${\rm PGL}_2 \simeq {\rm SO}_3$. For $n$ odd, we also have that ${\rm SO}_n = {\rm O}_n \times \mu_2$. Then we can conclude by combining Proposition \ref{prop:Amun} with Proposition \ref{prop:Inv On}.

\end{proof}

\begin{rmk}
One can take the reasoning in Proposition \ref{prop:Inv On} and Proposition \ref{prop:Inv Sn and PGL2} further to get the following criterion:

Let $G$ be a finite group acting on $I=\lbrace 1,\ldots, r\rbrace$, and assume for every ${\rm M}$ we have an injection 
\[\Inv(\Xcal, {\rm M}) \subseteq \Inv(\mu_\ell^r, {\rm M})^G.
\]
Then $\Inv(\Xcal, {\rm M})=\Inv(\mu_\ell^r, {\rm M})^G$ if and only if $\Inv(\Xcal, \K_{\ell})^G=\Inv(\mu_\ell^r, \K_{\ell})^G$.
\end{rmk}

As an application of Prop. \ref{prop:Inv Sn and PGL2}, we compute the Brauer groups of ${\rm B}{\rm S}_n$ and ${\rm BPGL}_2$.

\begin{lm}\label{BrFinite}
Let $\Mcal$ be a Deligne-Mumford stack, finite and smooth over $k$. Then ${\rm Br}(\Mcal)={\rm Br}'(\Mcal)$.
\end{lm}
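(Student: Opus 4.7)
The plan is to apply the criterion of \cite{EHKV}*{Thm. 3.6}: a cohomological Brauer class $\alpha \in \mathrm{Br}'(\Mcal)$ is represented by an Azumaya algebra if and only if the associated $\mu_n$-gerbe on $\Mcal$ is a global quotient stack. It therefore suffices to prove that every such gerbe over $\Mcal$ is a global quotient by a linear algebraic group.

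First I would observe that $\Mcal$ itself, being finite and smooth over $k$, is finite étale over $\Spec(k)$. Its coarse moduli space has the form $\coprod_i \Spec(k_i)$ for finite separable extensions $k_i/k$, and both $\mathrm{Br}(\Mcal)$ and $\mathrm{Br}'(\Mcal)$ split as direct sums along the connected components, so one can reduce to a single connected component. After pulling back to an algebraic closure, $\Mcal_{\bar k}$ becomes a disjoint union of neutral gerbes $\mathrm{B}_{\bar k} G_i$ for finite groups $G_i$, and a standard Galois descent argument produces a global presentation $\Mcal = [Y/H]$ where $Y$ is a finite $k$-scheme and $H$ is a finite (hence linear) algebraic $k$-group, e.g.\ via a faithful embedding $G_i \hookrightarrow \mathrm{GL}_{N, \bar k}$ followed by descent.

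Next I would extend this to gerbes. Given a $\mu_n$-gerbe $\mathcal{G} \to \Mcal$, the stack $\mathcal{G}$ remains finite and smooth over $k$ (Artin in general, not DM if $n$ is divisible by $\mathrm{char}(k)$), with stabilizer groups that are central extensions of those of $\Mcal$ by $\mu_n$ and hence still finite affine over $k$. The same Galois descent argument yields a presentation $\mathcal{G} = [Y'/H']$ by a linear algebraic $k$-group. Invoking \cite{EHKV}*{Thm. 3.6}, the Brauer class of $\mathcal{G}$ lifts to an Azumaya algebra on $\Mcal$, giving $\mathrm{Br}(\Mcal) = \mathrm{Br}'(\Mcal)$.

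The hard part will be making the Galois descent rigorous in the gerbe case, where one must simultaneously descend the central extension structure on $\widetilde{G}$ and the gerbe banding—though the latter is automatic from the hypothesis that $\mathcal{G}$ was already defined over $k$. A slicker alternative would be to cite the Kresch--Vistoli criterion \cite{KV}*{Thm. 2.2} for a DM stack to be a quotient stack; it applies directly to $\Mcal$, and inspection of its hypotheses (essentially the existence of a finite flat cover by a scheme) shows they are inherited by any gerbe over $\Mcal$ whose inertia is still finite and affine.
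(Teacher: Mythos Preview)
Your approach via the EHKV quotient-stack criterion is different from the paper's and considerably more involved. The paper gives a two-line argument using Gabber's lemma \cite{Gab81}*{Ch.~II, Lemma~4}: for a surjective finite locally free morphism $f\colon Y \to X$ (stated for ringed topoi, hence valid for DM stacks), a class $\alpha \in \mathrm{Br}'(X)$ lies in $\mathrm{Br}(X)$ as soon as $f^*\alpha \in \mathrm{Br}(Y)$. One simply takes $Y$ to be an affine \'etale atlas of $\Mcal$; since $\Mcal$ is finite and smooth over $k$, the atlas $Y$ is a zero-dimensional $k$-scheme, for which $\mathrm{Br}=\mathrm{Br}'$ is classical. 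No quotient presentations, no descent, and no restriction on the torsion order are needed.

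Your route, by contrast, has a genuine gap at $p$-torsion when $\mathrm{char}(k)=p>0$. As you yourself note, the $\mu_n$-gerbe $\mathcal{G}$ is then an Artin stack that is not Deligne--Mumford, so the Kresch--Vistoli criterion \cite{KV}*{Thm.~2.2} does not apply, and your Galois-descent sketch does not produce a quotient presentation in that generality. Even in the DM case the descent step is not routine: knowing $\Mcal_{\bar k}\simeq\coprod_i\mathrm{B}_{\bar k}G_i$ does not by itself give $\Mcal=[Y/H]$ over $k$, since neither the groups $G_i$ nor a common embedding into $\mathrm{GL}_N$ need be Galois-stable; what one actually needs is the EHKV characterization of quotient stacks via finite flat scheme covers, which is more than ``inspection of hypotheses''. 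Your argument would suffice for the prime-to-$c$ part (which is all the paper uses later), but it does not prove the lemma as stated, whereas Gabber's lemma handles all torsion uniformly.
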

\begin{proof}
By \cite{Gab81}*{Ch.II, Lemma 4}, given a surjective finite locally free map $Y \xrightarrow{f} X$, if $\alpha \in {\rm Br}'(X)$ and $f^*\alpha \in {\rm Br}(Y)$ then $\alpha \in {\rm Br}(X)$. The result is proven in the setting of strictly ringed topoi, so in particular it holds for Deligne--Mumford stacks. 

Then we can just apply it to $X \rightarrow \Mcal$, where $X$ is an affine cover of $\Mcal$, and use the fact that for zero dimensional schemes it is always true that ${\rm Br}={\rm Br}'$.
\end{proof}

\begin{cor}
We have
\[^c{\rm Br}({\rm B}{\rm S}_n) = \hspace{1pt} ^c{\rm Br}(k) \oplus {\rm H}^1(k, \ZZ/2\ZZ) \oplus \ZZ/2\ZZ,\quad ^c{\rm Br}({\rm B}{\rm PGL}_2)=\hspace{1pt}^c{\rm Br}(k)\oplus \ZZ/2\ZZ\]
\end{cor}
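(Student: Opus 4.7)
The plan is to apply Proposition~\ref{prop:Inv Sn and PGL2} with cycle-module coefficients $\M=\H_{\mu_\ell^\vee}$ for each $\ell$ coprime to $c$, use Proposition~\ref{prop:Inv to Br} to identify the degree-$2$ invariants with ${^c{\rm Br}'(-)_\ell}$, and then check that the resulting classes all come from honest Azumaya algebras.

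The degree-$2$ piece of the formula for $\Brm{\rm S}_n$ (with $n\geq 4$, so that $\alpha_2$ exists) reads
\[
\H^2(k,\mu_\ell)\,\oplus\, \alpha_1\cdot\H^1(k,\mu_\ell^\vee)_2 \,\oplus\, \alpha_2\cdot\H^0(k,\mu_\ell^\vee)_2,
\]
since an $\alpha_i$ of degree $i\geq 3$ would require $\M$ in negative degree. For $\Brm{\rm PGL}_2$ one gets analogously $\H^2(k,\mu_\ell)\oplus w_2\cdot\H^0(k,\mu_\ell^\vee)_2$. Passing to the colimit over $\ell$ coprime to $c$ converts these three kinds of summands into $^c{\rm Br}(k)$, $\H^1(k,\ZZ/2\ZZ)$ and $\ZZ/2\ZZ$ respectively (using that the $2$-torsion of $\mu_\ell^\vee$ is canonically $\ZZ/2\ZZ$ as soon as $2 \mid \ell$), which gives the stated formulas for $^c{\rm Br}'$.

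It then remains to promote ${\rm Br}'$ to ${\rm Br}$. For $\Brm{\rm S}_n$ this is immediate from Lemma~\ref{BrFinite}, since $\Brm{\rm S}_n$ is a smooth finite Deligne--Mumford stack. For $\Brm{\rm PGL}_2$ that lemma does not apply; instead I would exhibit explicit Azumaya representatives. The $^c{\rm Br}(k)$ summand is realised by pullback from $\Spec(k)$, and the extra $\ZZ/2\ZZ$ generator is represented by the tautological rank-$4$ Azumaya algebra $\mathcal{A}_{\rm univ}$ on $\Brm{\rm PGL}_2$ classified by the identity $\mathrm{PGL}_2$-torsor; its Brauer class is automatically $2$-torsion since the rank is $2^2$.

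The main thing to verify is that $[\mathcal{A}_{\rm univ}]$ is the nontrivial element in the extra $\ZZ/2\ZZ$, rather than zero. I would do this by pulling back to a field point $\Spec(F)\to\Brm{\rm PGL}_2$ corresponding to a non-split $\mathrm{PGL}_2$-torsor, for instance one coming from a division quaternion algebra over $F$: then $\mathcal{A}_{\rm univ}$ pulls back to that quaternion algebra, whose class is nontrivial in $\H^2(F,\mu_2)$, while $w_2$ pulls back to the same element by its very construction via the connecting homomorphism $\H^1(-,\mathrm{PGL}_2)\to\H^2(-,\mu_2)$.
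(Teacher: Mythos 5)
Your argument is correct and follows essentially the same route as the paper: read off the degree-two invariants with coefficients in ${\rm H}_{\mu_\ell^{\vee}}$ from Proposition \ref{prop:Inv Sn and PGL2}, invoke Lemma \ref{BrFinite} for ${\rm B}{\rm S}_n$, and exhibit an explicit Azumaya representative for the extra $\ZZ/2\ZZ$ on ${\rm B}{\rm PGL}_2$. The only cosmetic difference is that the paper realises that generator as the class of the universal conic $\left[P^1/{\rm PGL}_2\right]$ -- which is precisely the Severi--Brauer variety of your tautological algebra $\mathcal{A}_{\rm univ}$ -- and detects its nontriviality in the same way (trivial on the split torsor, nontrivial at a point corresponding to a quaternion division algebra).
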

\begin{proof}
The formulas for the cohomological Brauer group are immediate from the description of the generalized cohomological invariants given in Proposition \ref{prop:Inv Sn and PGL2}. 

For ${\rm B}{\rm S}_n$, we know that the Brauer group is equal to the cohomological Brauer group by the Lemma above. 

For ${\rm B}{\rm PGL}_2$, note that the universal conic $\mathcal{C}=\left[P^1/{\rm PGL}_2\right]$ induces a nonzero element $\lbrace \mathcal{C} \rbrace$ in the Brauer group which is trivial when pulled back to the base field. This implies that $\lbrace \mathcal{C} \rbrace = w_2 + \alpha_0 $, where $\alpha_0$ belongs to ${\rm Br}(k)$, showing that ${\rm Br}({\rm B}{\rm PGL}_2)={\rm Br}'({\rm B}{\rm PGL}_2)$.
\end{proof}

\section{The moduli stacks of hyperelliptic curves}\label{sec: pres Hg}



We briefly review Arsie and Vistoli's presentation of the moduli stack $\Hcal_g$, and recall some results from \cite{PirCohHypEven} and \cite{DilPir} that will be needed later. Let $n$ be an even positive integer, and consider the affine space $\bA^{n+1}$, seen as the space of binary forms of degree $n$. There are two different natural actions on this space. 

\begin{itemize}
\item An action of ${\rm GL}_2$ given by \[A\cdot f(x_0,x_1) = {\rm det}(A)^{n/2-1}f(A^{-1}(x_0,x_1)).\]
\item An action of ${\rm PGL}_2 \times \Gm$ given by \[(\left[A\right],t)\cdot f(x_0,x_1)={\rm det}(A)^{n/2}t^{-2}(f(A^{-1}(x_0,x_1))).\]
\end{itemize}

The open subset of square-free forms inside $\bA^{n+1}$ is $G$-invariant. We will denote it by $\bA^{n+1}_{\rm sm}$.

\begin{thm}\label{thm:presentation}
When $g\geq 2$ is even, we have an isomorphism $\left[\bA^{2g+3}_{\rm sm}/{\rm GL}_2\right] \simeq \Hcal_g$.

When $g\geq 3$ is odd, we have an ismorphism $\left[\bA^{2g+3}_{\rm sm}/{\rm PGL}_2\times \Gm\right] \simeq \Hcal_g$.
\end{thm}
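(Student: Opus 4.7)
The plan is to exhibit, in each parity case, an explicit equivalence between the quotient stack and $\mathcal{H}_g$ by translating the $G$-equivariant data on $\bA^{2g+3}$ into geometric data (a conic bundle plus a branch divisor) for the double-cover construction of a hyperelliptic curve, and conversely recovering this data from a hyperelliptic curve via its canonical involution.

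First I would unpack the $S$-points of $[\bA^{2g+3}_{\rm sm}/G]$. A morphism $S \to [\bA^{2g+3}_{\rm sm}/G]$ is the same as a $G$-torsor $E$ on $S$ together with a $G$-equivariant map $E \to \bA^{2g+3}_{\rm sm}$. In the case $G = \textnormal{GL}_2$, a torsor is a rank-$2$ bundle $V$ on $S$, and the chosen determinant twist by $\det(A)^{n/2-1}$ with $n = 2g+2$ is calibrated so that the equivariant map corresponds to a section $f$ of $\operatorname{Sym}^{2g+2}(V^\vee) \otimes (\det V)^{g}$, equivalently a section of $\mathcal{O}_{\bP(V)}(2g+2) \otimes \pi^*(\det V)^{g}$ on $\pi : \bP(V) \to S$, whose vanishing locus is étale over $S$. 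In the case $G = \textnormal{PGL}_2 \times \Gm$, a torsor is a pair $(P, M)$ of a Brauer–Severi curve $P \to S$ and a line bundle $M$ on $S$, and the equivariant map corresponds to a section of $\omega_{P/S}^{\otimes -(g+1)} \otimes \pi^* M^{\otimes -2}$ on $P$ with étale vanishing. In both cases this dictionary is a direct cocycle computation from the action formulas.

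Next I would build the functor to $\mathcal{H}_g$ by the double-cover construction: given $(V, f)$ or $(P, M, f)$, the section $f$ of a line bundle $\mathcal{M}$ on the conic bundle $\pi : P \to S$ (with $\pi = \bP(V) \to S$ in the even case) admits a natural square root $\mathcal{N}$ (namely $\mathcal{O}_{\bP(V)}(g+1) \otimes \pi^*(\det V)^{g/2}$ when $g$ is even, or $\omega_{P/S}^{\otimes -(g+1)/2} \otimes \pi^* M^{-1}$ in a modified form when $g$ is odd after the right choice of $M$). Set $C = \underline{\operatorname{Spec}}_{P}(\mathcal{O}_P \oplus \mathcal{N}^{\vee})$ with algebra structure given by the map $\mathcal{N}^{\otimes -2} \to \mathcal{O}_P$ induced by $f$; the étale condition on the zero locus implies $C \to S$ is smooth, and Riemann–Hurwitz gives fibers of genus $g$. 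The hyperelliptic involution is the obvious $\ZZ/2\ZZ$-action swapping signs on $\mathcal{N}^{\vee}$. Conversely, a hyperelliptic $C \to S$ admits a canonical involution $\iota$ (unique for $g \geq 2$), whose quotient $P = C/\iota$ is a Brauer–Severi curve; the branch locus, together with the line bundle $\pi_* \omega_{C/S}$ and its relation to $\omega_{P/S}$, reconstructs the data $(V, f)$ or $(P, M, f)$.

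The main obstacle is explaining why the even case descends from a $\textnormal{PGL}_2 \times \Gm$-presentation to a $\textnormal{GL}_2$-presentation. The point is that when $g$ is even, the conic bundle $P = C/\iota$ is always a projectivization $\bP(V)$ for some rank-$2$ bundle $V$ on $S$ (equivalently, $P \to S$ admits a section, and the class of $P$ in $\mathrm{Br}(S)$ is trivial); this can be seen by showing $\pi_* \omega_{C/S}$ is a rank-$g$ bundle of a form that produces such a $V$, or by checking the order of the Brauer class divides both $2$ and $g$, hence is trivial. In contrast, for $g$ odd the Brauer class is genuinely $2$-torsion and the additional $\Gm$-factor records the line bundle $M$ needed to trivialize the square root of $\omega_{P/S}^{-(g+1)}$, which exists globally only up to an ambiguity of a line bundle on $S$. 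Once this parity dichotomy is established, a routine verification on pairs $(V, f)$ and $(P, M, f)$ shows that the two constructions are mutually quasi-inverse, yielding the claimed equivalence of stacks.
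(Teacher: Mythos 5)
The paper itself does not prove this statement; it simply cites Arsie--Vistoli, and your outline is essentially a reconstruction of their argument: unpack torsors plus equivariant maps into pairs (conic bundle, branch section), build the hyperelliptic curve as a double cover via a square root of the branch line bundle, and observe that this square root is canonically available over $\mathrm{GL}_2$ exactly when $g$ is even, while for $g$ odd one needs the extra $\mathbb{G}_m$ to record the choice. So the strategy is the right one and matches the source.

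One step, however, is wrong as written and would invert the parity dichotomy, which is the crux of the theorem. You justify the triviality of the Brauer class of $P=C/\iota$ for even $g$ by saying its order ``divides both $2$ and $g$, hence is trivial.'' But $\gcd(2,g)=2$ when $g$ is even, so this proves nothing in the case you need, and it would ``prove'' triviality precisely when $g$ is odd --- contradicting both the rest of your own argument and the fact (recalled in the paper's introduction) that the class of $\mathcal{C}/\iota$ is nontrivial for odd $g$. The correct divisibility comes from exhibiting a canonical line bundle on $P$ of odd fiber degree: the eigen-summand $\mathcal{N}^{-1}\subset \phi_*\mathcal{O}_C$ has fiber degree $-(g+1)$, and $\omega_{P/S}\otimes\mathcal{N}$, through which the relative canonical map factors, has fiber degree $g-1$; the index of a Brauer--Severi curve divides the fiber degree of any line bundle on it, so the order of the class divides $\gcd(2,g+1)=\gcd(2,g-1)$, which equals $1$ exactly when $g$ is even. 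With $g$ replaced by $g\pm 1$ in that step, the rest of your outline goes through and agrees with the cited proof.
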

\begin{proof}
This is proven in \cite[4.7]{ArsVis}.
\end{proof}

When no confusion is possible, we will write $G$ for either ${\rm GL}_2$ or ${\rm PGL}_2\times \Gm$. Our computation will be for the most part done on the projectivizations \[P^n = \left( \bA^{n+1}\smallsetminus\{0\} \right)/\Gm\] with the induced action of $G$. 

The following $G$-invariant stratification will be crucial. Let $\Delta_{i}^n \subset P^n$ be the closed subscheme of $P^n$ whose points are forms of degree $n$ which are divisible by the square of a form of degree at least $i$, with the reduced subscheme structure. Then 
\[ P^n \supset \Delta^n_1 \supset \ldots \supset \Delta_{n/2}^n \]
is a $G$-invariant stratification of $P^n$.

 We define $\Delta_{\left[i\right]}^{n}$ as the subscheme of forms of degree $n$ divisible by the square of a form of degree $i$ but not higher, i.e. $\Delta_{\left[i\right]}^{n}=\Delta_{i}^n \setminus \Delta_{i+1}^n$, and similarly given $j > i$ we define $\Delta_{\left[ i,j\right]}^n = \Delta_{i}^n \setminus \Delta_{j+1}^n$. Finally, in keeping with this notation, we define the open subsets $P^{n}_{\rm sm}=P^{n}_{\left[0\right]}=P^{n}\setminus \Delta_{1}^n$ and $P^{n}_{\left[0,i\right]}=P^n \setminus \Delta_{i+1}^n$.
 
There is a natural map $P^{n-2r} \times P^r \rightarrow \Delta^{n}_{r}$ given by $(f,g) \rightarrow fg^2$. Checking that the map is equivariant with respect to the action of $G$ is easy. Note that if we restrict the map to $P^{n-2r}_{\rm sm} \times P^r$ the image is exactly $\Delta_{\left[r \right]}^n$.

\begin{prop}[\cite{PirCohHypEven}*{Prop. 3.3}]
The map $P^{n-2r}_{\rm sm} \times P^r \rightarrow \Delta^n_{\left[ r \right]}$ is an equivariant universal homeomorphism.
\end{prop}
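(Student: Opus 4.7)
My plan is to extend the multiplication map to a morphism $\tilde\Phi\colon P^{n-2r}\times P^r\to P^n$, $(f,g)\mapsto fg^2$, show that this $\tilde\Phi$ is finite, and then verify that its restriction $\Phi\colon P^{n-2r}_{\rm sm}\times P^r\to\Delta^n_{[r]}$ is surjective on points and universally injective. Equivariance with respect to the two $G$-actions is an immediate check from the action formulas on $\bA^{n+1}$, $\bA^{n-2r+1}$, and $\bA^{r+1}$; the statement already acknowledges that the map is equivariant, so the real content is the topological claim.

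For finiteness of $\tilde\Phi$, the source $P^{n-2r}\times P^r$ is projective so $\tilde\Phi$ is proper, and its fibres are finite by unique factorization of binary forms in $k[x_0,x_1]$: any $h\in P^n$ admits only finitely many decompositions $h=fg^2$ with the specified degrees, up to the scalar ambiguity that disappears in $P^{n-2r}\times P^r$. A proper morphism of Noetherian schemes with finite fibres is finite.

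Next I would identify the preimage $\tilde\Phi^{-1}(\Delta^n_{[r]})=P^{n-2r}_{\rm sm}\times P^r$. If $f=h^2f'$ with $\deg h\geq 1$, then $(hg)^2$ divides $fg^2$ with $\deg(hg)>r$, so $fg^2\in\Delta^n_{r+1}$ and does not lie in $\Delta^n_{[r]}$; conversely, when $f$ is square-free, unique factorization forces $g^2$ to be the maximal square divisor of $fg^2$, placing $fg^2$ in $\Delta^n_{[r]}$. Since the restriction of a finite morphism over the preimage of a locally closed subscheme is finite, $\Phi$ itself is finite, and surjectivity on points is immediate from the definition of $\Delta^n_{[r]}$.

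The last step is universal injectivity of $\Phi$. Bijectivity on geometric points follows once more from unique factorization: over an algebraically closed field, a form $h\in\Delta^n_{[r]}$ admits a unique factorization $fg^2$ with $f$ square-free, uniquely as a point of $P^{n-2r}\times P^r$. For the triviality of residue-field extensions, I would argue by Galois descent: if $h$ is defined over a field $K$ and $(f,g)$ is its factorization over $\overline K$, then for every $\sigma\in{\rm Gal}(\overline K/K)$ the pair $(\sigma f,\sigma g)$ is also a factorization of $\sigma h=h$, so $\sigma\cdot(f,g)=(f,g)$ in $(P^{n-2r}\times P^r)(\overline K)$ by uniqueness; hence $(f,g)$ is $K$-rational. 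Combining finiteness, surjectivity, and universal injectivity yields the universal homeomorphism claim. The delicate point will be the Galois-descent step, where it is essential that the uniqueness of the factorization is phrased in $P^{n-2r}\times P^r$ rather than in affine space: the sign and scalar ambiguity of $g$ must be genuinely absorbed into the projective equivalence, or else $K$-rationality of both factors could fail.
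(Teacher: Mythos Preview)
The paper does not prove this proposition; it is quoted from \cite{PirCohHypEven}*{Prop.~3.3} without argument. Your outline is sound and would constitute a valid proof, but the Galois-descent step at the end is both mis-stated and unnecessary.

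Mis-stated: over an imperfect field $K$ of characteristic $p>2$, take $h=x_0^{\,p}-a\,x_1^{\,p}$ with $a\notin K^p$. Then $h\in\Delta^{\,p}_{[(p-1)/2]}(K)$, but its unique preimage $(l,\,l^{(p-1)/2})$ with $l=x_0-a^{1/p}x_1$ is only defined over $K(a^{1/p})$, not over $K$. Being fixed by ${\rm Aut}(\overline K/K)$ forces rationality only over the perfect closure of $K$, so what your argument actually gives is that the residue-field extension is \emph{purely inseparable}, not trivial. That is of course precisely what radiciality requires, so the slip is harmless once reworded.

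Unnecessary: universal injectivity is equivalent to injectivity on $\Omega$-points for every algebraically closed field $\Omega$, which is exactly your unique-factorization statement. Indeed, if two $L$-points of a scheme agree after base change to $\overline L$, they already agree over $L$, since $\Spec\overline L\to\Spec L$ is faithfully flat and hence an epimorphism of schemes. So finiteness, surjectivity, and bijectivity on geometric points already suffice; you can drop the residue-field discussion entirely. The ``delicate point'' you flag about scalar ambiguity is not the real issue---that is genuinely absorbed by the projective coordinates, as you say---the only wrinkle is the imperfect-field behaviour above.
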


Consider the map $(P^1)^n \rightarrow P^n$ given by $(l_1, \ldots, l_n) \mapsto l_1\ldots l_n$. If we restrict the map to $P^n_{\rm sm}$, it becomes a ${\rm S}_n$-torsor, thus inducing a map $P^n_{\rm sm} \rightarrow {\rm B}{\rm S}_n$. 

Another way of seeing this is the following: consider the stack ${\rm \'et}_n$ of \'etale algebras of degree $n$. There is a natural isomorphism ${\rm \'et}_n \simeq {\rm B}{\rm S}_n$. Given a point $f:S \rightarrow P^n_{\rm sm}$, we get an \'etale algebra by taking the zero locus of the form $f$ over $P^1_S$. One can easily check that these two maps coincide.

Now, the stack $\left[ P^n_{\rm sm}/G \right]$ parametrizes families of conics $C'/S$ (trivial families if $G={\rm GL}_2$) equipped with a line bundle $L$ of vertical degree $n/2$ and a subscheme $W_{C'}$ of codimension $1$, \'etale on the base, whose associated divisor is in the linear series of $L^{\otimes 2}$. 

Taking the subscheme $W_{C'}/S$ we get a map to ${\rm \'et}_n = {\rm B}{\rm S}_n$. It's easy to check that this map provides a factorization to the maps above, and that moreover it extends to $\bA^{n+1}_{\rm sm}$ and $\left[ \bA^{n+1}_{\rm sm}/G \right]$. When we take the stack $\Hcal_g$ this map is precisely the map that sends an hyperelliptic curve over $S$ to its Weierstrass divisor, seen as an $S$-scheme.

Write a form $f=a_0x_0^n + \ldots + a_nx_1^n$. The restriction of the map $P^n_{\rm sm} \rightarrow {\rm B}{\rm S}_n$ to the complement of the hyperplane $a_0=0$ comes from the generically free ${\rm S}_n$--representation on $\bA^n$ given by permuting the coordinates. This implies that the map is smooth-Nisnevich, so it induces an injective map on cohomological invariants.

\begin{prop}[\cite{DilPir}*{Prop. 1.1- Cor. 1.2}]
The pullback map $\Inv({\rm B}{\rm S}_n,\M)\rightarrow \Inv(\Mcal,\M)$, where $\Mcal$ is any of the above stacks, is injective.
\end{prop}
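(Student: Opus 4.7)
The plan is to deduce the injectivity directly from the fact, established in the paragraph immediately preceding the Proposition, that the Weierstrass map $P^n_{\rm sm}|_{a_0 \neq 0} \to {\rm B}{\rm S}_n$ is smooth-Nisnevich. By Theorem \ref{thm:M sheaf} this gives an injection
\[
\Inv({\rm B}{\rm S}_n, \M) \hookrightarrow \Inv(P^n_{\rm sm}|_{a_0\neq 0}, \M).
\]

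The key observation is that this smooth-Nisnevich map factors through each of the stacks $\Mcal$ in the statement. Indeed, the Weierstrass map $\bA^{n+1}_{\rm sm} \to {\rm B}{\rm S}_n$ (equivalently $P^n_{\rm sm} \to {\rm B}{\rm S}_n$) is $G$-equivariant with trivial action on the target, and so descends to a canonical map $\Mcal \to {\rm B}{\rm S}_n$ on each of the three quotient presentations. Composing the open embedding $P^n_{\rm sm}|_{a_0\neq 0} \hookrightarrow P^n_{\rm sm}$ with the quotient projection $P^n_{\rm sm} \to \Mcal$ (or the analogous composition for the $\bA^{n+1}_{\rm sm}$-based presentations) yields a factorization
\[
P^n_{\rm sm}|_{a_0\neq 0} \longrightarrow \Mcal \longrightarrow {\rm B}{\rm S}_n
\]
whose composition is precisely the smooth-Nisnevich map above.

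Applying the contravariant functor $\Inv(-, \M)$ produces a factorization
\[
\Inv({\rm B}{\rm S}_n, \M) \longrightarrow \Inv(\Mcal, \M) \longrightarrow \Inv(P^n_{\rm sm}|_{a_0\neq 0}, \M)
\]
of the injective composite pullback. Since the composition is injective, the first arrow $\Inv({\rm B}{\rm S}_n, \M) \to \Inv(\Mcal, \M)$ is forced to be injective as well, which is the content of the Proposition.

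The argument is purely formal: it consists in factoring the already-established smooth-Nisnevich injection through the intermediate stack $\Mcal$. There is no substantive obstacle here. The geometric content lies entirely in the paragraph preceding the Proposition, which exploits the permutation representation of ${\rm S}_n$ on $\bA^n$ to identify the restricted Weierstrass map with a standard equivariant approximation of ${\rm B}{\rm S}_n$, and hence to see that it is smooth-Nisnevich. This is why the Proposition appears as a cited consequence of \cite{DilPir}*{Prop. 1.1--Cor. 1.2} rather than as an independent result.
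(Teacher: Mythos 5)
Your argument is correct and is essentially the paper's (and \cite{DilPir}'s) own: the restricted Weierstrass map $P^n_{\rm sm}|_{a_0\neq 0}\to{\rm B}{\rm S}_n$ is smooth-Nisnevich by the preceding paragraph, hence injective on $\Inv(-,\M)$ by Theorem \ref{thm:M sheaf}, and since it factors through each $\Mcal$ the first pullback in the factorization must be injective. The only point worth making explicit is that for the $\bA^{n+1}_{\rm sm}$-based presentations (including $\Hcal_g$) the relevant chart is the affine one, and the composite $\bA^{n+1}_{\rm sm}|_{a_0\neq 0}\to{\rm B}{\rm S}_n$ is still smooth-Nisnevich because it is the restricted Weierstrass map precomposed with a $\Gm$-torsor, and $\Gm$ is special.
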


\section{The generalized cohomological invariants of $\Hcal_g$}\label{sec: Coh Inv Hg}

We are ready to compute the cohomological invariants of $\Hcal_g$ with coefficients in a general $\ell$-torsion cycle module $\M$. The specialization of this computation to degree two invariants with coefficients in ${\rm H}_{\mu_{\ell}^{\vee}}$ will give us the result on the Brauer groups.

From now on, the cycle module $\M$ will be assumed to be always of $\ell$-torsion.

It is worth noting that computing only the degree two invariants does not seem any easier than computing (almost) the full group in this case, but in more general situations it might be possible to obtain the result in low degrees even if the full ring seems too hard to approach.

First we need a few more lemmas from \cites{PirCohHypEven, PirCohHypThree}. Recall that 
\[{\rm CH}^{*}_{{\rm GL}_2}(P^n)=\ZZ\left[t,\lambda_1, \lambda_2\right]/(R_n(t,\lambda_1,\lambda_2)) \]
where $\lambda_1, \lambda_2$ are the Chern classes of the standard representation of ${\rm GL}_2$, $t$ is the first Chern class of $\OO_{P^n}(-1)$ and $R_n(t,\lambda_1,\lambda_2)$ is a polynomial of degree $n$. We also have:
\[{\rm CH}^{*}_{{\rm PGL}_2\times \Gm}(P^n)=\ZZ\left[t,s,c_2, c_3\right]/(T_n(t,c_2,c_3),2c_3)\]
where $t$ is the first Chern class of $\OO_{P^n}(-1)$, $s$ is the first Chern class of the standard representation of $\Gm$, $c_2,c_3$ are the second and third Chern classes of the three dimensional representation of ${\rm PGL}_2$ coming from the isomorphism ${\rm PGL}_2 \simeq {\rm SO}_3$.

We should also note that if $\Gm$ acts trivially on $X$ then $A^{*}_{\Gm}(X,\M)=A^{*}(X,\M)\otimes \ZZ\left[s\right]$. In particular, for our computations we can often consider $G={\rm PGL}_2$ rather than ${\rm PGL}_2\times \Gm$.

\begin{lm}
The class of $\Delta_{1}^n$ in ${\rm CH}^{1}_{{\rm GL}_2}(P^n)$ is $2(n-1)(t + n\lambda_1)$. The class of $\Delta_{1}^n$ in ${\rm CH}^{1}_{{\rm PGL}_2}(P^n)$ is $2(n-1)t$.
\end{lm}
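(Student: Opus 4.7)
The plan is to realise $\Delta_1^n$ as the zero locus of a single $G$-equivariant section of an explicit line bundle on $P^n$ --- the classical discriminant of a binary form of degree $n$ --- and to read off its class as the first Chern class of this equivariant line bundle. From the stated presentations of the equivariant Chow rings, the relation $R_n$ has degree at least two, so the degree-one part of ${\rm CH}^1_G(P^n)$ is freely generated by $\{t,\lambda_1\}$ for $G={\rm GL}_2$ and by $\{t,s\}$ for $G={\rm PGL}_2\times\Gm$. I can therefore write $[\Delta_1^n]=at+b\lambda_1$ (respectively $a't+b's$) and the problem reduces to pinning down the two coefficients.

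The coefficient of $t$ is fixed by a non-equivariant computation: the classical discriminant is a polynomial of degree $2(n-1)$ in the coefficients of a binary form of degree $n$, so $\Delta_1^n$ is a hypersurface of degree $2(n-1)$ in $P^n$. This gives $a=a'=2(n-1)$. For the remaining coefficient, using the factorisation $f=\prod_{i=1}^n(a_ix_0-b_ix_1)$ and the classical formula $\mathrm{disc}(f)=\prod_{i<j}(a_ib_j-a_jb_i)^2$, a direct calculation with the two group actions described in Section~\ref{sec: pres Hg} yields
\[ \mathrm{disc}(A\cdot f)=\det(A)^{-2(n-1)}\mathrm{disc}(f) \qquad\text{and}\qquad \mathrm{disc}((\bar A,u)\cdot f)=u^{-4(n-1)}\mathrm{disc}(f). \]
In particular, the discriminant is ${\rm PGL}_2$-invariant, and its only semi-invariance weight in the second case comes from the $\Gm$-factor.

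The discriminant therefore defines a $G$-invariant section of an equivariant line bundle of the form $\OO_{P^n}(2(n-1))\otimes\chi$, where $\chi$ is the unique character of $G$ that cancels the semi-invariance weight above. Its first Chern class is $[\Delta_1^n]$, and expanding it in the chosen generators gives $b=2n(n-1)$ for $G={\rm GL}_2$. For $G={\rm PGL}_2\times\Gm$ the $\Gm$-factor acts on $V_n$ by scalars, hence trivially on $P^n$: the weight $u^{-4(n-1)}$ of $\chi$ is absorbed by the $\Gm$-part of the tautological equivariant structure underlying the generator $t$, producing $b'=0$. The main delicate point --- and the step I expect to require most care --- is matching the equivariant $\OO_{P^n}(1)$ implicit in the choice of generator $t$ with the tautological structure on $V_n$; once that normalisation is fixed, expanding $c_1$ in terms of $t$, $\lambda_1$ and $s$ is routine book-keeping and yields the announced formulas.
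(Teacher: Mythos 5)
Your strategy---realising $\Delta_1^n$ as the vanishing locus of the discriminant, viewed as an equivariant section of a line bundle, and reading off its class as an equivariant first Chern class---is a legitimate and self-contained route; the paper itself offers no argument, only citations to \cite{PirCohHypEven}*{Prop.~4.3} and \cite{DilChowHyp}*{Prop.~5.2}. Your preparatory steps check out: the discriminant has degree $2(n-1)$ in the coefficients, and the semi-invariance weights $\det(A)^{-2(n-1)}$ and $u^{-4(n-1)}$ under the two actions are correct (the exponent $(n/2-1)\cdot 2(n-1)-n(n-1)=-2(n-1)$, resp. $(n/2)\cdot 2(n-1)-n(n-1)=0$ for the ${\rm PGL}_2$-part). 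These weights are also consistent with ${\rm Pic}(\Hcal_g)\simeq\ZZ/(4g+2)\ZZ$ for $g$ even, which is a good sign.

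The gap is precisely the step you set aside as ``routine book-keeping'': identifying the equivariant structure on $\OO_{P^n}(\pm 1)$ that underlies the generator $t$ in the quoted presentations. This is not book-keeping---it is the only place the coefficients of $\lambda_1$ and $s$ can come from, and the values $b=2n(n-1)$ and $b'=0$ you assert do not follow from anything you have written. Concretely, if $t$ is $c_1$ of the tautological sub-bundle $\OO(-1)\subset V_n\times P^n$ (the most natural choice), the discriminant becomes an invariant section of $\OO(2(n-1))\otimes\det^{-2(n-1)}$ and one finds $[\Delta_1^n]=\pm 2(n-1)(t+\lambda_1)$: the $\lambda_1$-coefficient equals the $t$-coefficient because the weight exponent happens to equal the degree. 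The extra factor of $n$ in the stated $2(n-1)(t+n\lambda_1)$ must therefore be produced entirely by the discrepancy between the linearization defining $t$ and the tautological one, and you neither identify that linearization nor compute the discrepancy; as written, $b=2n(n-1)$ is copied from the statement rather than derived. Likewise, in the ${\rm PGL}_2\times\Gm$ case the tautological linearization yields an $s$-coefficient of $-4(n-1)$, which vanishes only after renormalizing $t$ so that $\Gm$ acts trivially on the fibres (the choice consistent with the paper's later formula $c_1(\Lcal)=t-2s$); this needs to be said, not assumed. To complete the proof you must (i) fix explicitly the linearization to which $t$ refers, and (ii) perform the $c_1$ expansion in that normalization, checking the outcome against $c_1(\Lcal)$ from the later lemma and against ${\rm Pic}(\Hcal_g)$---a cross-check that is genuinely nontrivial here and that your asserted coefficients do not visibly pass.
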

\begin{proof}
For $G={\rm GL}_2$, the statement is proved in \cite{PirCohHypEven}*{Prop. 4.3}. For $G={\rm PGL}_2$, see \cite{DilChowHyp}*{Prop. 5.2}, albeit there is a mistake in the statement of the result, as $4(n-2)h_n$ should be replaced by $(4n-2)h_n$ (note that the $n$ in the statement of \cite{DilChowHyp}*{Prop. 5.2} would be $n/2$ in this Lemma's notation).
\end{proof}

We begin by dealing with the case of odd torsion, which is easy due to the following Lemmas. 

\begin{lm}
Consider the natural action of ${\rm PGL}_2$ on $P^1$. We have
\[A^{*}_{{\rm PGL}_2}(P^1,\M)\simeq A^{*}_{\Gm}({\rm Spec}(k),\M) = \ZZ\left[t \right]\otimes \M^{\bullet}(k).\]
\end{lm}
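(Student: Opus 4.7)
Plan:

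The ${\rm PGL}_2$-action on $P^1$ is transitive, and the stabilizer at $[0{:}1]$ is the Borel subgroup $B \subset {\rm PGL}_2$ of (classes of) upper triangular matrices. This gives $P^1 \simeq {\rm PGL}_2/B$, hence
\[ [P^1/{\rm PGL}_2] \simeq \Brm B, \qquad A^{*}_{{\rm PGL}_2}(P^1, \M) \simeq A^{*}_{B}(\Spec(k), \M). \]
The rest of the proof reduces to identifying $A^{*}_B(\Spec(k), \M)$ with $A^{*}_{\Gm}(\Spec(k), \M)$ and then invoking Proposition \ref{prop:AGm} applied to $\Gm = {\rm GL}_1$.

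The Borel $B$ is the semidirect product $\mathbb{G}_a \rtimes \Gm$, where $\mathbb{G}_a$ is the unipotent radical and $\Gm$ is the maximal torus. The plan is to show that the retraction $\pi:B \to \Gm$ induces an isomorphism $\pi^*:A^{*}_{\Gm}(\Spec(k), \M) \xrightarrow{\sim} A^{*}_{B}(\Spec(k), \M)$ by producing compatible equivariant approximations. First, take a faithful $\Gm$-representation $V$ with $\Gm$ acting freely on an open $U \subset V$ whose complement has arbitrarily high codimension, and pull back to $B$ along $\pi$ so that $\mathbb{G}_a$ acts trivially on $V$. Then enlarge to the $B$-representation $V \oplus \mathbb{A}^1$ defined by
\[ (n, t) \cdot (v, a) = \bigl(t \cdot v,\; \chi(t)\, a + n \bigr), \]
where $\chi: \Gm \to \Gm$ is the character governing the semidirect product structure of $B$. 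A direct check shows this is a genuine $B$-action and that $B$ acts freely on $U \times \mathbb{A}^1$, so $(U \times \mathbb{A}^1)/B$ is a good equivariant approximation for $\Brm B$. Crucially, the natural projection
\[ (U \times \mathbb{A}^1)/B \longrightarrow U/\Gm \]
is an isomorphism of schemes, since $\mathbb{G}_a$ acts freely and transitively on the $\mathbb{A}^1$-fibers. Both approximations therefore coincide, giving the desired chain of isomorphisms
\[ A^{*}_{B}(\Spec(k), \M) \simeq A^{*}((U \times \mathbb{A}^1)/B, \M) \simeq A^{*}(U/\Gm, \M) \simeq A^{*}_{\Gm}(\Spec(k), \M) \simeq \ZZ[t] \otimes \M^{\bullet}(k). \]

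The main obstacle is essentially bookkeeping: one has to choose the character twist on the $\mathbb{A}^1$-factor compatibly with the semidirect product structure of $B$, which is a routine but slightly delicate verification. A more abstract alternative would invoke the fact that $\mathbb{G}_a$-torsors are Zariski-locally trivial $\mathbb{A}^1$-bundles to realize $\Brm B \to \Brm \Gm$ as an affine bundle after approximation, and then apply homotopy invariance (Proposition \ref{prop:properties}(5)) to conclude.
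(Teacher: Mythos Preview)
Your proof is correct and follows the same strategy as the paper: use transitivity of the ${\rm PGL}_2$-action on $P^1$ to identify $A^{*}_{{\rm PGL}_2}(P^1,\M)$ with $A^{*}_{B}(\Spec(k),\M)$ for the Borel $B \simeq \Gm \ltimes \mathbb{G}_a$, then eliminate the unipotent radical to reduce to $A^{*}_{\Gm}(\Spec(k),\M)$, and finally apply Proposition~\ref{prop:AGm}. The only difference is that the paper outsources the reduction $A^{*}_{\Gm \ltimes \mathbb{G}_a} \simeq A^{*}_{\Gm}$ to \cite{PirCohHypThree}, whereas you supply a self-contained argument via an explicit compatible pair of equivariant approximations; your alternative remark about realizing $\Brm B \to \Brm \Gm$ as an affine bundle and invoking homotopy invariance is in fact closer to what that cited lemma does.
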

\begin{proof}
We have \[A^*_{G}(P^1,\M) \simeq A^{*}_{H}(\Spec(k),\M)\]
 where $H$ is the stabilizer of a point in $P^1$, which is isomorphic to a semidirect product $\Gm \ltimes \mathbb{G}_a$. Then by \cite[8]{PirCohHypThree} we have \[A^{*}_{H}(\Spec(k),\M)\simeq A^{*}_{\Gm}(\Spec(k),\M)\]
 which allows us to conclude immediately.
\end{proof}

\begin{lm}\label{P1P1}
Let $\ell$ be odd. Then the pullback map $A^0_G(P^n_{\rm sm},\M) \rightarrow A^{0}_G(P^n_{\rm sm} \times P^1 \times P^1,\M)$ is surjective.
\end{lm}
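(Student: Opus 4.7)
The plan is to handle the two cases $G = {\rm GL}_2$ and $G = {\rm PGL}_2 \times \Gm$ separately. For $G = {\rm GL}_2$, note that $P^1 = P(V)$ is the projectivization of the standard $2$-dimensional representation $V$. Consequently $P^n_{\rm sm} \times P^1 \times P^1 \to P^n_{\rm sm}$ is a tower of two ${\rm GL}_2$-equivariant projective bundles, and the equivariant analogue of Proposition \ref{prop:properties}.(11), obtained via equivariant approximation, immediately gives that only the $c_1(\OO(1))^0$ term contributes in codimension zero, whence the pullback $\pi^*:A^0_G(P^n_{\rm sm}, \M) \to A^0_G(P^n_{\rm sm} \times P^1 \times P^1, \M)$ is an isomorphism. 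This case does not require the odd torsion hypothesis.

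For $G = {\rm PGL}_2 \times \Gm$ the $\Gm$-factor acts trivially on $P^1 \times P^1$, so by equivariant approximation one reduces to showing surjectivity of $A^0_{{\rm PGL}_2}(X', \M) \to A^0_{{\rm PGL}_2}(X' \times P^1 \times P^1, \M)$ for $X' = P^n_{\rm sm} \times^{\Gm} E$ with $E$ a suitable $\Gm$-representation. Although $P^1$ is not ${\rm PGL}_2$-equivariantly a projective bundle, we use the identification $[X' \times P^1 \times P^1/{\rm PGL}_2] \simeq [X' \times P^1/B]$ coming from $P^1 = {\rm PGL}_2/B$ by descent, where $B$ is a Borel subgroup containing the maximal torus $T$. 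Since the unipotent radical contributes trivially to $A^*$ with cycle module coefficients, $A^*_{{\rm PGL}_2}(X' \times P^1 \times P^1, \M) = A^*_T(X' \times P^1, \M)$; and since $P^1 = P(k \oplus k_\chi)$ is a $T$-equivariant projectivization, the $T$-equivariant projective bundle formula gives $A^0_T(X' \times P^1, \M) = A^0_T(X', \M)$. Thus the pullback is identified with the inclusion $A^0_{{\rm PGL}_2}(X', \M) \hookrightarrow A^0_T(X', \M)$, and by an extension of Proposition \ref{prop:ABPGL2 odd} to the relative case (valid for odd $\ell$ since $|W|=2$ is then invertible in $\M$), its image is precisely the subgroup $A^0_T(X', \M)^W$ of Weyl invariants.

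Surjectivity is thus equivalent to the Weyl group $W = \ZZ/2$ acting trivially on $A^0_T(X', \M)$, which is the main obstacle. The Weyl element acts on $P^n$ by reversing the coefficients $a_i \leftrightarrow a_{n-i}$, and although this is geometrically nontrivial, for odd coefficients it should induce a trivial action on $A^0_T(P^n_{\rm sm}, \M)$. The cleanest approach is via the smooth-Nisnevich root map $P^n_{\rm sm} \to \Brm {\rm S}_{n+2}$ from Section \ref{sec: pres Hg}, which is manifestly Weyl-invariant because reversing the coefficients of a binary form does not alter its root set as an \'etale algebra; combined with the injection $\Inv(\Brm {\rm S}_{n+2}, \M) \hookrightarrow \Inv([P^n_{\rm sm}/G], \M)$, one expects the image of this pullback to generate $A^0_T(P^n_{\rm sm}, \M)$ for odd $\ell$, yielding the desired Weyl-triviality.
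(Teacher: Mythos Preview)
Your reduction for ${\rm GL}_2$ matches the paper's. For ${\rm PGL}_2$, however, your route diverges from the paper's and contains a genuine gap.

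The paper does not pass to the maximal torus. Instead it uses that the stabiliser $H$ of a single point of $P^1$ is the Borel $\Gm\ltimes\mathbb{G}_a$, which is \emph{special}; this yields an injection
\[
A^0_{{\rm PGL}_2}(P^n_{\rm sm}\times P^1\times P^1,\M)\;\simeq\;A^0_{H}(P^n_{\rm sm}\times P^1,\M)\;\hookrightarrow\;A^0(P^n_{\rm sm}\times P^1,\M)=A^0(P^n_{\rm sm},\M),
\]
so surjectivity reduces to showing that the forgetful map $A^0_{{\rm PGL}_2}(P^n_{\rm sm},\M)\to A^0(P^n_{\rm sm},\M)$ is onto. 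The paper then computes the non-equivariant target explicitly via the localization sequence for $\Delta_1^n\subset P^n$ (using a non-equivariant version of Lemma \ref{lm:Delta1 trivial}) and compares it with the equivariant localization sequence to exhibit preimages. The point is that landing in the \emph{non}-equivariant group makes the target directly computable.

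Your approach instead reduces to showing that the Weyl group $W=\ZZ/2$ acts trivially on $A^0_T(X',\M)$, and this is where the gap lies. Your final paragraph is not a proof: the phrases ``should induce'' and ``one expects'' give this away. Concretely, the root map goes to $\Brm{\rm S}_n$ (not ${\rm S}_{n+2}$), and for odd $\ell$ Proposition \ref{prop:Inv Sn and PGL2} says its invariants are just $\M^\bullet(k)$; so all you learn from it is that the constants are Weyl-fixed, which is vacuous. You have given no argument that $A^0_T(X',\M)$ contains nothing beyond constants, and establishing that would require exactly the kind of explicit localization computation the paper performs --- you have relocated the essential difficulty rather than resolved it. The claimed relative extension of Proposition \ref{prop:ABPGL2 odd} (image equals Weyl invariants) is also not in the paper; a transfer argument using that $|W|=2$ is invertible in $\M$ would supply it, but you should say so rather than cite a proposition about the absolute case.
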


We postpone the proof this Lemma, as it involves an argument similar to the one used in the next Lemma and in the following Proposition.

\begin{lm}\label{lm:Delta1 trivial}
Suppose $\ell=p^n$, with $p$ a prime number different from $2$. Then $A^0_{G}(\Delta_{1}^n,{\M})=\M^{\bullet}(k)$.
\end{lm}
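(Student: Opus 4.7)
The plan is to perform reverse induction on $r$ along the $G$-invariant stratification
\[
\Delta^n_1 \supset \Delta^n_2 \supset \cdots \supset \Delta^n_{n/2},
\]
proving at each level that $A^0_G(\Delta^n_r, \M) = \M^\bullet(k)$. The base case $r = n/2$ is immediate: the deepest stratum $\Delta^n_{n/2}$ is the image of the $G$-equivariant squaring map $P^{n/2} \to P^n$, $g \mapsto g^2$, which is a universal homeomorphism onto its image. Combining the projective bundle formula with Proposition \ref{prop:ABPGL2 odd} (or Proposition \ref{prop:AGm}) for odd-torsion coefficients gives $A^0_G(\Delta^n_{n/2}, \M) = A^0_G(P^{n/2}, \M) = A^0_G(\Spec(k), \M) = \M^\bullet(k)$.

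For the inductive step, assuming $A^0_G(\Delta^n_{r+1}, \M) = \M^\bullet(k)$, the localization sequence for the codimension-one open pair $\Delta^n_{[r]} \subset \Delta^n_r$ (with closed complement $\Delta^n_{r+1}$) yields
\[
0 \to A^0_G(\Delta^n_r, \M) \hookrightarrow A^0_G(\Delta^n_{[r]}, \M) \xrightarrow{\partial} \M^\bullet(k).
\]
Since the structure morphism embeds $\M^\bullet(k)$ into $\ker(\partial)$, it suffices to show $\ker(\partial) = \M^\bullet(k)$. By the $G$-equivariant universal homeomorphism $\Delta^n_{[r]} \cong P^{n-2r}_{\rm sm} \times P^r$ recalled in Section~\ref{sec: pres Hg}, this reduces to a computation on $P^{n-2r}_{\rm sm} \times P^r$. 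I apply a second localization sequence for the inclusion $P^{n-2r}_{\rm sm} \times P^r \subset P^{n-2r} \times P^r$, whose first term $A^0_G(P^{n-2r} \times P^r, \M) = \M^\bullet(k)$ follows from iterated projective bundle formula, and whose third term $A^0_G(\Delta^{n-2r}_1 \times P^r, \M) = \M^\bullet(k)$ will be handled by a doubly-nested induction in $n$, using Lemma \ref{P1P1} to absorb the $P^r$ factor. The obstruction pushforward $i_*$ is multiplication by the class $[\Delta^{n-2r}_1] = 2(n-2r-1)(t + (n-2r)\lambda_1)$ (and its analogue in the ${\rm PGL}_2 \times \Gm$ case), whose effect on $\M^\bullet(k)$ is controlled via the projective bundle description of $A^1_G(P^{n-2r} \times P^r, \M)$.

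The main obstacle is that $[\Delta^{n-2r}_1]$ can be divisible by $p$: although $2$ is invertible in $\M$ by the odd-torsion hypothesis, when $p \mid (n-2r-1)$ both coefficients vanish modulo $p$ and the naive injectivity of $i_*$ on constants fails. To overcome this, the strategy is to track two localization sequences simultaneously: any would-be non-constant element of $A^0_G(P^{n-2r}_{\rm sm} \times P^r, \M)$ (arising, for example, from discriminant-type invariants of the square-free factor) should have nontrivial $\partial$ landing in $A^0_G(\Delta^n_{r+1}, \M) = \M^\bullet(k)$, which has already been pinned down by the outer induction. Carefully verifying this propagation — that the rigidity enforced by Proposition \ref{prop:ABPGL2 odd} in the odd-torsion case prevents the pathological elements surviving into $A^0_G(\Delta^n_r, \M)$ — is the crux of the argument and the step I expect to require the most care.
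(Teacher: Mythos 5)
Your overall architecture --- localization along the stratification by the $\Delta^n_r$, transported through the universal homeomorphisms $P^{n-2r}_{\rm sm}\times P^r\to\Delta^n_{[r]}$ --- matches the paper's, but the paper is considerably more economical: since $\Delta^n_3$ has codimension two in $\Delta^n_1$, one has $A^0_G(\Delta^n_1,\M)\simeq A^0_G(\Delta^n_{[1,2]},\M)$, so only the top two strata ever enter and no descending induction over the whole chain $\Delta^n_1\supset\cdots\supset\Delta^n_{n/2}$ is needed.

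The genuine problem is that the step you defer to the end is the entire content of the lemma, and you do not supply the mechanism that makes it work. You correctly identify that $A^0_G(P^{n-2}_{[0]}\times P^1,\M)$ may contain nonconstant classes (precisely when $p$ divides the relevant coefficient of $[\Delta^{n-2}_1]$), and that one must show such classes ramify along the deeper stratum, i.e.\ do not lie in $\ker\partial$. The paper's tool for this is a concrete injectivity statement: the pushforward $\pi_*\colon A^0_G(\Delta^{n-2}_{[1]}\times P^1,\M)\to A^0_G(\Delta^n_{[2]},\M)$ is injective, which, combined with the compatibility of proper pushforward with the boundary maps (Proposition \ref{prop:properties}(4)) and the triviality of $A^0_G(P^{n-2}_{[0,1]}\times P^1,\M)$, pins $\ker\partial$ down to $\M^{\bullet}(k)$. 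That injectivity is itself proved by factoring through universal homeomorphisms so as to reduce to the degree-two map $\psi\colon P^{n-4}_{[0]}\times P^1\times P^1\to P^{n-4}_{[0]}\times P^2$, $(f,g,h)\mapsto(f,gh)$: the pullback $\psi^*$ is surjective because both sides are generated over $A^0$ of the common base $P^{n-4}_{[0]}$, and the projection formula gives $\psi_*\psi^*=\deg(\psi)\cdot\mathrm{id}=2\cdot\mathrm{id}$, which is invertible on $p^n$-torsion coefficients with $p$ odd. This is exactly where the hypothesis $p\neq 2$ enters, and no appeal to ``rigidity from Proposition \ref{prop:ABPGL2 odd}'' can substitute for it: without an injectivity statement of this kind your two localization sequences cannot be compared, and the proposal as written does not close.
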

\begin{proof}
  As $A^{0}_{G}(\Delta^n_{1},\M)$ is isomorphic to $A^{0}_{G}(\Delta^n_{\left[ 1,2 \right]},\M)$ (because $\Delta^n_{3}$ has codimension two in $\Delta^n_{1}$) we can compute it using the following exact sequence:

 \begin{center}
 $ 0 \rightarrow A^{0}_{G}(\Delta^n_{\left[ 1,2\right]},\M) \rightarrow A^{0}_{G}(\Delta^n_{\left[1\right]},\M)  \xrightarrow{\partial} A^{0}_{G}(\Delta^n_{\left[2\right]},\M)$
 \end{center}
 
We want to prove that the kernel of $\partial$ is equal to $\M^{\bullet}(k)$. This will then imply that the image of $A^{0}_{G}(\Delta^n_{\left[1,2\right]},\M)$ must be equal to $\M(k)$, and thus it must be trivial.
 
The map $P^{n-2}_{\left[0,1\right]} \times P^{1} \xrightarrow{\pi} \Delta^n_{\left[1,2\right]} $ yields the following commutative diagram with exact columns:

\begin{center}
$\xymatrixcolsep{3pc}
\xymatrix{ A^{0}_{G}(P^{n-2}_{\left[0,1\right]} \times P^1 ,\M) \ar@{->}[r]^{\pi_{*}} \ar@{->}[d] & A^{0}_{G}( \Delta^n_{\left[1,2\right]},\M)   \ar@{->>}[d] \\
A^{0}_{G}(P^{n-1}_{\left[0\right]} \times P^1,\M) \ar@{^{(}->>}[r]^{\pi_{*}} \ar@{->}[d]^{\partial_1} &  A^{0}_{G}(\Delta^n_{\left[1\right]},\M) \ar@{->}[d]^{\partial}\\
 A^{0}_{G}(\Delta^{n-2}_{\left[1\right]}\times P^1,\M) \ar@{->}[r]^{\pi_{*}} & A^{0}_{G}(\Delta^n_{\left[2\right]},\M)  } $
\end{center}

 The second horizontal map is an isomorphism because $\pi_*$ is a universal homeomorphism when restricted to $\Delta^n_{\left[ 1\right]}$.
 
  The kernel of $\partial_{1}$ is trivial because $A^{0}_{G}(P^{n-2}_{\left[0,1\right]} \times P^1,\M)$ is trivial, as $A^{0}_{G}(P^{n-2} \times P^1,\M)$ is trivial by the projective bundle formula and $\Delta^{n-2}_{2} \times P^r$ has codimension $2$. 
 
  We claim that the third horizontal map is injective, implying that the kernel of $\partial$ must be trivial too.
 
 Let $\psi$ be the map from $P^{n-4}_{\left[0\right]} \times P^{1} \times P^1$ to $P^{n-4}_{\left[0\right]} \times P^{2}$ sending $(f,g,h)$ to $(f,gh)$. We have a commutative diagram:
 
\begin{center}
$\xymatrixcolsep{5pc}
\xymatrix{ P^{n-4}_{\left[0\right]} \times P^1 \times P^{1}  \ar@{->}[r]^{\pi_1} \ar@{->}[d]^{\psi} & \Delta^{n -2}_{\left[1\right]}  \times P^1 \ar@{->}[d]^{\pi}\\ 
P^{n-4}_{\left[0\right]}\times P^{2}  \ar@{->}[r]^{\pi_2} & \Delta^n_{\left[2\right]}} $
\end{center}

Where $\pi_1$ and $\pi_2$ are defined respectively by $(f,g,h) \rightarrow (fg^2,h)$ and $(f,g) \rightarrow (fg^2)$. The maps $\pi_1$ and $\pi_2$ are universal homeomorphisms, so the pushforward maps $(\pi_1)_*,(\pi_2)_*$ are isomorphisms. Then if we prove that $\psi_*$ is injective $\pi_*$ will be injective too. Consider this last diagram:
 
\begin{center}
$\xymatrixcolsep{5pc}
\xymatrix{ P^{n-4}_{\left[0\right]}\times P^{1} \times P^1 \ar@{->}[dr]^{p_1} \ar@{->}[d]^{\psi}\\ 
P^{n-4}_{\left[0\right]}\times P^{2}  \ar@{->}[r]^{p_2} & P^{n-4}_{\left[0\right]}}$
\end{center}
The pullbacks along $p_1$ and $p_2$ are both surjective, implying that the pullback along $\psi$ is surjective. We have $\psi_* ( \psi^* \alpha) = \mathrm{deg}(\psi) \alpha$ by the projection formula. Then as the degree of $\psi$ is $2$, $\psi_*$ is injective.
 
\end{proof}

The Lemma tells us that when $\M$ is of odd torsion, the group $A^0_G(P^n_{\rm sm},\M)$ must fit in the following exact sequence
\[0 \rightarrow \M^{\bullet}(k)=A^0_G(P^n,\M) \rightarrow  A^0_G(P^n_{\rm sm},\M) \rightarrow \M^{\bullet}(k)=A^0_G(\Delta^n_{1},\M) \rightarrow A^1_G(P^n,\M).\]

So, roughly speaking, it can contain at most an additional copy of the cohomology of the base field, shifted in degree by one. This is of course imprecise at this point, as the sequence may not split. 

In the next Proposition we compute the kernel of the last map, and moreover we will show that the group $ A^0_G(P^n_{\rm sm},\M)$ splits as a direct sum of a copy of the trivial elements coming from $A^0_G(P^n,\M)$ and the kernel. This will allow us to easily compute the cohomological invariants of $\Hcal_g$ with coefficients in $\M$.

\begin{prop}\label{prop:CohInvHgodd}
Assume $\ell$ is odd. Let $\ell'$ be the greatest common divisor of $\ell$ and $2g+1$. Then \[\Inv(\Hcal_g, \M) \simeq \M^{\bullet}(k) \oplus \M^{\bullet}(k)_{\ell'}[1].\]
In particular, if $\ell$ is coprime with $2g +1$, the cohomological invariants of $\Hcal_g$ with coefficients in $\M$ are trivial.
\end{prop}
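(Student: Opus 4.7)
The starting point is the Arsie--Vistoli presentation $\Hcal_g = [\bA^{2g+3}_{\rm sm}/G]$ from Theorem \ref{thm:presentation}. Using homotopy invariance to trade $\bA^{2g+3}_{\rm sm}$ for the open $P^n_{\rm sm}\subseteq P^n$ (with $n=2g+2$), together with descent along the central $\Gm$ of scalars, the computation reduces to determining $A^0_G(P^n_{\rm sm}, \M)$, where $G$ can be taken to be ${\rm PGL}_2$ (the leftover $\mu_2$-gerbe arising from the weight-two scalar action contributes nothing in odd torsion). By Proposition \ref{prop: l=rs} I may further assume $\ell$ is an odd prime power.

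The main engine is the localization exact sequence for the $G$-equivariant embedding $\Delta^n_1 \hookrightarrow P^n$:
\[
A^0_G(P^n, \M) \to A^0_G(P^n_{\rm sm}, \M) \xrightarrow{\partial} A^0_G(\Delta^n_1, \M) \xrightarrow{i_*} A^1_G(P^n, \M).
\]
By the projective bundle formula combined with Propositions \ref{prop:AGm} and \ref{prop:ABPGL2 odd}, I identify $A^0_G(P^n,\M)=\M^{\bullet}(k)$. By Lemma \ref{lm:Delta1 trivial}, $A^0_G(\Delta^n_1,\M)=\M^{\bullet}(k)$. The leftmost map is split injective, as can be seen by evaluating at any $k$-rational point of $\Hcal_g$.

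Next, by the projection formula, $i_*$ acts as multiplication by $[\Delta^n_1]=2(n-1)t$ in $A^1_{{\rm PGL}_2}(P^n,\M)$. Since $2$ is a unit in an odd-torsion module and multiplication by $t$ is injective, the kernel of $i_*$ on $\M^{\bullet}(k)$ equals $\M^{\bullet}(k)_{n-1}=\M^{\bullet}(k)_{\gcd(\ell,2g+1)}=\M^{\bullet}(k)_{\ell'}$. Recalling that the boundary $\partial$ lowers cohomological degree by one, this produces the short exact sequence
\[
0 \to \M^{\bullet}(k) \to A^0_G(P^n_{\rm sm}, \M) \to \M^{\bullet}(k)_{\ell'}[1] \to 0.
\]

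It remains to split this sequence. Mirroring the treatment of $\Mcal_{1,1}$ in Theorem \ref{thm:Inv of M11}, I would use the discriminant polynomial $D$ on $\bA^{n+1}$: the symbol $\{D\}\in\K_{\ell}^1(k(\bA^{n+1}))$ is unramified off $\Delta_1$ and has boundary $1$ along it, so by Lemma \ref{lm:multiplication} the product $\{D\}\cdot\alpha$ is well defined for every $\alpha\in\M^{\bullet}(k)_{\ell'}$ and is mapped by $\partial$ to $\alpha$. The delicate part is the $G$-invariance check: a weight computation shows that the scalar $\Gm$ acts on $D$ with weight $4(n-1)$, which annihilates any $\alpha\in\M^{\bullet}(k)_{\ell'}$ once $2$ is inverted, while the remaining ${\rm PGL}_2$-factor contributes no obstruction because ${\rm PGL}_2$ has no nontrivial characters. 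I expect this invariance verification, together with the gerbe-bookkeeping in the initial reduction, to be the main technical hurdle of the argument.
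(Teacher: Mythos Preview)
Your localization--sequence core is exactly the paper's argument: both use the sequence for $\Delta_1^n\hookrightarrow P^n$, invoke Lemma~\ref{lm:Delta1 trivial} to get $A^0_G(\Delta_1^n,\M)=\M^{\bullet}(k)$, identify the kernel of $i_*$ via the formula $[\Delta_1^n]=2(n-1)t$, and split with a degree--one class hitting $1$ under $\partial$. Your explicit $\{D\}$ is precisely the paper's abstract $\gamma$.

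Where your sketch diverges is in the reduction step, and there is a genuine slip there. The passage from $\bA^{2g+3}_{\rm sm}$ to $P^n_{\rm sm}$ is \emph{not} homotopy invariance: the map $\bA^{n+1}_{\rm sm}\to P^n_{\rm sm}$ is a $\Gm$-torsor, not an affine bundle, so $A^0$ can genuinely jump (cf.\ Lemma~\ref{lm:Gm torsor}). The paper therefore does not collapse everything to $[P^n_{\rm sm}/{\rm PGL}_2]$ up front; instead it computes $A^0_G(P^{2g+2}_{\rm sm},\M)$ for the full group $G$ and then, as a separate final step, shows that the $\Gm$-torsor $\Hcal_g\to[P^{2g+2}_{\rm sm}/G]$ contributes nothing new by checking that multiplication by $c_1(\Lcal)$ is injective on $A^0_G(P^{2g+2}_{\rm sm},\M)$ (this is the argument alluded to via Theorem~\ref{thm:CohInvHg2}). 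Your weight computation on $D$ only shows that the elements $\{D\}\cdot\alpha$ \emph{descend} to $\Hcal_g$; it does not rule out extra invariants coming from the torsor. Also, the ``$\mu_2$-gerbe'' picture is accurate when $G={\rm PGL}_2\times\Gm$ (odd $g$), but for even $g$ one has $G={\rm GL}_2$, which is not a product ${\rm PGL}_2\times\Gm$, so the bookkeeping you propose does not literally apply there. Replacing your first paragraph with the paper's two--stage approach (full $G$ on $P^n_{\rm sm}$, then the $c_1(\Lcal)$--injectivity check) fixes both issues and the rest of your argument goes through unchanged.
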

\begin{proof}
We begin by computing the invariants of $\left[ P^{2g+2}_{\rm sm}/ G \right]$.

First consider the case of $\M=\K_{\ell}$. Consider the exact sequence
\[
\begin{tikzcd}
  0 \rar &  A^0_{G}(P^{2g+2},\ZZ/p^n\ZZ) \rar
             \ar[draw=none]{d}[name=X, anchor=center]{}
    & A^{0}_G(P^{2g+2}_{\rm sm},\ZZ/p^n\ZZ) \ar[rounded corners,
            to path={ -- ([xshift=2ex]\tikztostart.east)
                      |- (X.center) \tikztonodes
                      -| ([xshift=-2ex]\tikztotarget.west)
                      -- (\tikztotarget)}]{dll}[at end]{} \\      
   A^{0}_G(\Delta^{2g+2}_{1},\ZZ/p^n\ZZ) \rar & A^{1}_G(P^{2g+2},\ZZ/p^n\ZZ) \rar & A^{1}_G(P^{2g+2}_{\rm sm},\ZZ/p^n\ZZ) 
\end{tikzcd}
\]
We have $A^{0}_G(\Delta^{2g+2}_{1},\K_{\ell})=\K_{\ell}^{\bullet}(k)$, and the pushforward map is just multiplication by $\lbrace \Delta^{2g+2}_{1} \rbrace$. Assume first that $\ell$ divides $2g+1$, so that the image of $\lbrace \Delta^{2g+2}_{1} \rbrace$ is zero. Then if we pick an inverse image $\gamma$ of $1$ the submodule $\gamma \cdot \K_{\ell}^{\bullet}(k)$ maps isomorphically to $A^{0}_G(\Delta_{1,n},\K_{\ell})$, splitting the exact sequence.

Now let $\ell$ be general and let $\ell'$ be the greatest common divisor of $\ell$ and $2g+1$. Let $\gamma \in A^0_G(P^{2g+2}_{\rm sm},\K_{\ell'})$ be as above.

Then for any $x \in \K_{\ell}^{\bullet}(k)_{\ell'}$ the element $\gamma \cdot x$ belongs to $A^0(P^{2g+2}_{\rm sm},\K_{\ell})$, and moreover the boundary map sends $\gamma \cdot\K_{\ell}^{\bullet}(k)_{\ell'}$ to $\K_{\ell}^{\bullet}(k)_{\ell'} \subset A^0(\Delta^{2g+2}_{1},\K_\ell)$, which is exactly the kernel, splitting the exact sequence.

For a general $\ell$-torsion cycle module $\M$, take $\gamma$ as above. It's easy to see that we have $A^{0}_G(P^{2g+2}_{\rm sm},\M) = \gamma \cdot \M^{\bullet}(k)_{\ell'}$.

The next step consists of verifying that the $\Gm$-torsor $\Hcal_g \rightarrow \left[ P^{2g+2}_{\rm sm}/ G \right]$ does not generate any new invariant. This can be done as in the proof of Theorem \ref{thm:CohInvHg2}, and it is an easy consequence of the fact that the first Chern class of the line bundle coming from the torsor does not annihilate any element in $\M^{\bullet}(k)$.
\end{proof}

\begin{proof}[Proof of Lemma \ref{P1P1}]
First note that if $G$ is ${\rm GL}_2$ or in the non-equivariant case the statement is obvious by the projective bundle formula. 

Now consider the case where $G={\rm PGL}_2$. We have \[A^0_{G}(P^n_{\rm sm}\times P^{1} \times P^1,\M) \simeq A^{0}_{H}(P^{n}_{\rm sm}\times P^{1},\M)\]
where $H$ is the stabilizer of a point in $P^1$, which is isomorphic to a semidirect product $\Gm \ltimes \mathbb{G}_a$. As $\mathrm{H}$ is a special group the pullback \[A^{0}_{H}(P^{n}_{\rm sm}\times P^{1},\M) \rightarrow  A^{0}(P^{n}_{\rm sm}\times P^{1}, \M)=A^{0}(P^{n}_{\rm sm}, \M)\] has to be injective. We claim that the pullback $A^0_G(P^n_{\rm sm},\M) \rightarrow A^0(P^n_{\rm sm},\M)$ is surjective, which would allow us to conclude immediately.

One can use the same techniques used to prove Lemma \ref{lm:Delta1 trivial} to easily show that when $p \neq 2$ the non-equivariant group $A^{0}( \Delta^n_{1},\M)$ is trivial, and thus $A^{0}(P^n_{\rm sm},\M)$ is either trivial or generated by $1$ and an element in degree one corresponding to the equation for $\Delta_{1}^{n}$, multiplied by the submodule of $\M^{\bullet}(k)$ which annihilates the class of $\Delta_{1}^{n}$ in $A^1(P^{n},\M)$.

In the latter case, consider the following commutative diagram induced by the pullback from equivariant to non-equivariant Chow groups with coefficients
 
 \begin{center}
$\xymatrixcolsep{3pc}
\xymatrix{  A^{0}_{G}(P^{n},\M) \ar@{->}[r] \ar@{->}[d] & A^{0}(P^{n},\M)   \ar@{->}[d] \\
A^{0}_{G}(P^{n}_{\rm sm},\M) \ar@{->}[r] \ar@{->}[d]^{\partial} &  A^{0}(P^{n}_{\rm sm},\M) \ar@{->}[d]^{\partial}\\
 A^{0}_{G}(\Delta_{1}^n,\M) \ar@{->}[r] \ar@{->}[d] & A^{0}(\Delta_{1}^n,\M)  \ar@{->}[d]\\
  A^{1}_{G}(P^{n},\M) \ar@{->}[r]  & A^{1}(P^{n},\M)} $
\end{center}

Both the top and the bottom horizontal map are isomorphisms, as one can see using the fact that the groups on the top row are trivial and the groups on the bottom row are given by $c_1^G(\mathcal{O}_{P^{n}}(-1))\cdot \M^{\bullet}(k)$.

Moreover $A^{0}(P^{n}_{\rm sm},\M)$ is generated as by $1 \cdot \M^{\bullet}(k)$ and an element $\alpha \cdot \M^{\bullet}(k)$, where $\alpha$ is an element such that $\partial(\alpha)=1$ in $A^{0}_{G}(\Delta^n_{1},\M)$, with no additional relations.

The third horizontal map maps $1 \in A^{0}_{G}(\Delta_{1}^n,\M )$ to $1 \in A^{0}(\Delta_{1}^n,\M)$, which shows that an element $\tau$ of degree zero maps to zero in the equivariant group $ A^{1}_{G}(P^{n},\M)$ if and only if it maps to zero in $ A^{1}(P^n,\M)$. Then there must be an element \[\alpha' \in  A^{0}_{G}(P^{n}_{\rm sm},\M)\] which maps to $\alpha \in  A^{0}(P^{n}_{\rm sm},\M)$, thus the pullback $A^{0}_{G}(P^{n}_{\rm sm},\M) \rightarrow A^0(P^{n}_{\rm sm},\M)$ is surjective. This concludes the proof.
\end{proof}

The even case is much more complicated. We first compute the invariants with coefficients in $\K_{2^r}$ of $\left[ P^{2g+2}_{\rm sm} / G \right]$. Our computation is based on the fact that we already know that the cohomological invariants of ${\rm S}_n$ inject into those of $\left[ P^{2g+2}_{\rm sm} / G \right]$. Using this, we inductively show that ``there is no more room'' and we have found all the invariants. This is done for $r=1$ in \cite{DilPir}.

\begin{lm}
We have:
\begin{itemize}
\item if $g$ is even, then
\[\Inv(\left[ P^{2g+2}_{\rm sm} / G \right], \K_{2^r}) \simeq \Inv(\Brm {\rm S}_{2g+2},\K_{2^r}).\]

\item If $g$ is odd, then
\[\Inv(\left[ P^{2g+2}_{\rm sm} / G \right], \\K_{2^r}) \simeq \Inv(\Brm {\rm S}_{2g+2},\K_{2^r}) \oplus \K_{2^r}^{\bullet}(k)_2\left[ 2 \right]\]
where the copy of $\K_{2^r}^{\bullet}(k)_2\left[ 2 \right]$ comes from the cohomological invariants of ${\rm PGL}_2$.
\end{itemize}
\end{lm}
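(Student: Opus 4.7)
\textbf{The plan} is to argue by induction on $r$, with the base case $r=1$ being established in \cite{DilPir}. Set $\Xcal := \left[P^{2g+2}_{\rm sm}/G\right]$, and let $J_r$ denote the group claimed on the right-hand side of the statement (so $J_r = \Inv(\Brm{\rm S}_{2g+2}, \K_{2^r})$ in the even case, plus the summand $\K_{2^r}^\bullet(k)_2[2]$ in the odd case).

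\textbf{Inclusion of the right-hand side.} First I would construct the natural inclusion $J_r \hookrightarrow \Inv(\Xcal, \K_{2^r})$: the summand $\Inv(\Brm{\rm S}_{2g+2}, \K_{2^r})$ embeds by pullback along the smooth-Nisnevich Weierstrass divisor map $\Xcal \to \Brm{\rm S}_{2g+2}$ recalled at the end of Section \ref{sec: pres Hg}; in the odd case, the extra summand $\K_{2^r}^\bullet(k)_2[2]$ is obtained by pulling back the class $w_2 \in \Inv(\Brm\PGLt, \K_{2^r})$ of Proposition \ref{prop:Inv Sn and PGL2} via the natural projection $\Xcal \to \Brm\PGLt$, which exists in the odd case since $G = \PGLt \times \Gm$.

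\textbf{Inductive step.} I would apply $\Inv(\Xcal, -)$ to the short exact sequence of cycle modules
\[0 \to \K_{2^{r-1}} \xrightarrow{\cdot 2} \K_{2^r} \to \K_2 \to 0\]
from the beginning of Section \ref{sec: Sn PGL2}, obtaining the left-exact sequence
\[0 \to \Inv(\Xcal, \K_{2^{r-1}}) \xrightarrow{\cdot 2} \Inv(\Xcal, \K_{2^r}) \to \Inv(\Xcal, \K_2).\]
By the inductive hypothesis and the base case, the outer terms equal $J_{r-1}$ and $J_1$. A direct check shows that the analogous ``predicted'' sequence $0 \to J_{r-1} \xrightarrow{\cdot 2} J_r \to J_1$ has image in $J_1$ equal just to the constant summand $\K_2^\bullet(k)$: indeed, for $r \geq 2$ the $2$-torsion subgroup $\K_{2^r}^\bullet(k)_2 \simeq 2^{r-1}\K_{\rm M}^\bullet(k)/2^r\K_{\rm M}^\bullet(k)$ reduces to zero mod $2$, killing the $\alpha_i$-components and (in the odd case) the $w_2$-component of $J_r$. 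A short diagram chase then yields $J_r = \Inv(\Xcal, \K_{2^r})$ provided we can also show that the image of $\Inv(\Xcal, \K_{2^r}) \to \Inv(\Xcal, \K_2)$ is contained in the constant summand.

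\textbf{The main obstacle} is precisely this last matching step: we must rule out the existence of invariants with $\K_{2^r}$-coefficients whose reduction mod $2$ is a non-constant class in $J_1$, such as a Stiefel--Whitney class $\alpha_i$ or $w_2$. I expect this to require a direct computation using the localization long exact sequence in equivariant Chow groups with $\K_{2^r}$-coefficients for the stratification $\Delta_1^{2g+2} \subset P^{2g+2}$, extending the $r=1$ argument of \cite{DilPir} by making use of the universal homeomorphisms $P^{n-2i}_{\rm sm} \times P^i \to \Delta_{[i]}^n$ of Section \ref{sec: pres Hg} together with the explicit formula for $[\Delta_1^{2g+2}]$. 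The sharpest difficulty lies in the odd case ($G = \PGLt \times \Gm$): the clean presentation of Proposition \ref{prop:ABPGL2 odd} fails for $2$-torsion coefficients, so one must work only with the codimension-zero information from Proposition \ref{prop:Inv Sn and PGL2}, combined with the Chern-class formula of Proposition \ref{prop:chern} and the multiplicativity of Lemma \ref{lm:multiplication}, to control the codimension-$1$ contributions entering the localization sequence.
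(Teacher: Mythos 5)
Your strategy (induction on $r$ via the coefficient sequence $\K_{2^{r-1}}\to\K_{2^r}\to\K_2$) is genuinely different from the paper's, which instead inducts on $n=2g+2$ using the localization sequence $A^0_G(P^n,\K_{2^r})\hookrightarrow A^0_G(P^n_{\rm sm},\K_{2^r})\xrightarrow{\partial}A^0_G(\Delta_1^n,\K_{2^r})$ and shows that the boundary $\partial$ already maps the known submodule $\Inv(\Brm{\rm S}_n,\K_{2^r})$ onto everything it possibly could, leaving ``no more room''. Unfortunately your reduction has two concrete errors. First, the sequence $0\to\Inv(\Xcal,\K_{2^{r-1}})\xrightarrow{\cdot 2}\Inv(\Xcal,\K_{2^r})\to\Inv(\Xcal,\K_2)$ is not left exact: the map of cycle modules $\K_{2^{r-1}}\to\K_{2^r}$ is multiplication by $2$ on $\K_{\rm M}^{\bullet}/2^{r-1}\to\K_{\rm M}^{\bullet}/2^r$, and its kernel contains the image of the $2$-torsion of $\K_{\rm M}^{\bullet}$ (e.g.\ the class of $-1$ in degree one, or $\lbrace -1,-1\rbrace$ in degree two); the paper's exact sequence $\K_\eta\to\K_{\eta\ell}\to\K_\ell\to 0$ is deliberately not claimed to be injective on the left. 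Exactness at the middle term is equally unjustified. Second, your ``direct check'' that $J_r\to J_1$ lands in the constant summand rests on the false identification $\K_{2^r}^{\bullet}(k)_2\simeq 2^{r-1}\K_{\rm M}^{\bullet}(k)/2^r\K_{\rm M}^{\bullet}(k)$: the $2$-torsion of $\K_{\rm M}^{\bullet}(k)/2^r$ is $(2^{r-1}\K_{\rm M}^{\bullet}(k)+\K_{\rm M}^{\bullet}(k)[2])/2^r\K_{\rm M}^{\bullet}(k)$, whose image in $\K_2^{\bullet}(k)$ is the image of $\K_{\rm M}^{\bullet}(k)[2]$ and is nonzero for, say, $k=\bQ$ (take $\lbrace -1,-1\rbrace$). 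So $\alpha_i\cdot\tau$ with $\tau\in\K_{2^r}^{\bullet}(k)_2$ can reduce to a non-constant class mod $2$, and your diagram chase does not close.

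Beyond these errors, the step you flag as ``the main obstacle'' is precisely where the entire content of the lemma lies, and you only sketch a plan for it. The paper resolves it not by controlling the reduction map to $\K_2$, but by the opposite manoeuvre: it computes the target $A^0_G(\Delta_1^n,\K_{2^r})\simeq A^0_G(P^{n-2}_{\rm sm}\times P^1,\K_{2^r})$ by induction on $n$, observes that since $[\Delta_1^n]$ is twice but not four times a generator of ${\rm CH}^1_G(P^n)$ the kernel of the pushforward to $A^1_G(P^n,\K_{2^r})$ meets the constants exactly in the $2$-torsion, and then uses Lemma \ref{InvImage} to lift invariants $\zeta_i\in A^0_G(P^n_{\rm sm},\K_2)$ with prescribed boundaries to $\K_{2^r}$-valued classes after multiplying by $2$-torsion coefficients, establishing surjectivity of $\partial$ onto that kernel. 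If you want to salvage an induction on $r$, you would still need this surjectivity statement, at which point the induction on $r$ buys you nothing over the paper's argument.
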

\begin{proof}
Consider the exact sequence
\[A^{0}_G(P^n,\K_{2^r}) \hookrightarrow A^{0}_G(P^n_{\rm sm},\K_{2^r}) \xrightarrow{\partial} A^{0}_G(\Delta^n_{1},\K_{2^r}) \subset A^{0} _G(P^{n-2}_{\rm sm}\times P^1, \K_{2^r})\]
First consider the case $r=1$.  We will proceed by induction on $n$. If we assume the result for $n-2$, we have that $ A^{0} _G(P^{n-2}_{\rm sm}\times P^1, \K_{2^r})=\Inv(\Brm {\rm S}_{n-2},\K_{2})$. Note that if $G={\rm PGL}_2$ the invariant $w_2$ in $A^{0} _G(P^{n-2}_{\rm sm}\times P^1, \K_{2^r})$ is killed by the $P^1$. The cokernel of the first map contains the submodule of non-trivial invariants of ${\rm S}_n$. Thus we have a map
\[\Inv(\Brm {\rm S}_n,\K_{2}) / \K_{2}^{\bullet}(k) \rightarrow \Inv(\Brm {\rm S}_{n-2},\K_{2})_2\]
which lowers degree by one. Comparing the generators of the image with the generators of $A^{0} _G(P^{n-2}_{\rm sm}\times P^1, \K_{2^r})$ degree by degree shows that the map is surjective, and in particular there cannot be any additional element in $A^{0}_G(P^n_{\rm sm},\K_{2})$, yielding the result.

Now pick $r > 1$. We will proceed by induction on $n$ even. If we assume the result for $n-2$, we have that $ A^{0} _G(P^{n-2}_{\rm sm}\times P^1, \K_{2^r})=\Inv(\Brm {\rm S}_{n-2})$. Note that if $G={\rm PGL}_2$ the invariant $w_2$ is killed by the $P^1$. Moreover, note that as the image of $\left[ \Delta_{1,n} \right]$ is divisible by $2$ but not by $4$, the intersection of the kernel of $A^{0}_{G}(\Delta_{1,n},\K_{2^r}Z) \rightarrow A^{1}_G(P^n,\K_{2^r})$ and the subring of trivial invariants of ${\rm S}_{n-2}$ is exactly the two-torsion.

On the other hand, the cokernel of the inclusion of $A_G^0(P^n,\K_{2^r})$ into the group $A_G^0(P^n_{\rm sm},\K_{2^r})$ is isomorphic to the subgroup of non-trivial invariants of ${\rm S}_n$. Thus we have a map
\[\Inv(\Brm {\rm S}_n,\K_{2^r}) / \K_{2^r}^{\bullet}(k) \rightarrow \Inv(\Brm {\rm S}_{n-2},\K_{2^r})_2\]
which lowers the degrees by one. We claim this map is surjective. Let 
\[\alpha = \tau_0 + \alpha_{1}\tau_1 + \ldots +\alpha_{n/2-1}\tau_{n/2-1} \]
with $\tau_0, \ldots, \tau_{n/2-1} \in \K_{2^r}^{\bullet}(k)_2$ elements of $\Inv(\Brm {\rm S}_{n-2},\K_{2^r})$. Let $\zeta_{1}, \ldots, \zeta_{n}$ be elements of $A^0_{G}(P^{n}_{\rm sm}, \K_{2})$ such that $\partial(\zeta_1)=1, \ldots, \partial(\zeta_n)=\alpha_{n/2-1}$.
 
Then by Lemma \ref{InvImage} the element
\[\zeta = \zeta_{1} \tau_0 + \ldots + \zeta_{n/2} \tau_{n/2-1}\]
belongs to $A^0_{G}(P^{n}_{\rm sm}, \K_{2^r})$. Now note that by the compatibility of the boundary map with the morphism of cycle modules $\K_{2^r} \rightarrow \K_{2}$ the restriction modulo two of $\partial(\zeta_i)$ is equal to $\alpha_{i-1}$. But then $\alpha_{i-1}-\partial(\zeta_{i})$ is a multiple of $2$, proving that $\partial(\zeta)=\alpha$. This concludes the proof. 
\end{proof}

\begin{cor}
Assume that $\ell$ is a power of $2$. If $g$ is even, the cohomological invariants with coefficients in $\M$ of $\left[ P^{2g+2}_{\rm sm} / G \right]$ are isomorphic to the cohomological invariants of ${\rm S}_{2g+2}$.

If $g$ is odd, the cohomological invariants with coefficients in $D$ of $\left[ P^{2g+2}_{\rm sm} / G \right]$ are a direct sum of the cohomological invariants of ${\rm S}_{2g+2}$ and a copy of $\M^{\bullet}(k)_2\left[ 2 \right]$ coming from ${\rm PGL}_2$.
\end{cor}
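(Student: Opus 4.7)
The plan is to deduce this corollary from the preceding lemma (which handles the case $\M=\K_{2^r}$) by using the multiplication pairing
\[
A^{0}_G(P^{2g+2}_{\rm sm},\K_{2^r})\otimes \M^{\bullet}(k)_{2^r}\longrightarrow A^{0}_G(P^{2g+2}_{\rm sm},\M)
\]
provided by Lemma \ref{lm:multiplication}, which makes sense because $\M$ is $2^r$-torsion. Using the injectivity of $\Inv(\Brm {\rm S}_{2g+2},\M)\hookrightarrow \Inv([P^{2g+2}_{\rm sm}/G],\M)$ (from the last Proposition of Section \ref{sec: pres Hg}) together with the analogous injection for $w_2 \cdot \M^{\bullet}(k)_2$ when $G=\PGL_2\times\Gm$, we already get one direction; the content lies in surjectivity.

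For surjectivity, I would run the same induction on $n=2g+2$ as in the proof of the lemma, using the exact sequence
\[
0\to A^0_G(P^n,\M)\to A^0_G(P^n_{\rm sm},\M)\xrightarrow{\partial} A^0_G(\Delta^n_1,\M)\to A^1_G(P^n,\M).
\]
The universal homeomorphism $P^{n-2}_{\rm sm}\times P^1\to \Delta^n_{[1]}$ together with the inductive hypothesis and the computation $A^*_G(P^1,\M)\simeq A^*_{\Gm}(\Spec(k),\M)$ identify $A^0_G(\Delta^n_1,\M)$ with $\Inv(\Brm {\rm S}_{n-2},\M)$ (the $w_2$ summand in the odd case dies under restriction to $P^1$ by Proposition \ref{prop:AGm}/homotopy invariance on the residual $\Gm$-factor). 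Now multiplication by the class of $\Delta^n_1$ in $A^1_G(P^n,\M)$ equals $2(n-1)(t+n\lambda_1)$ or $2(n-1)t$ depending on $G$; since $\M$ is $2$-primary-torsion, the resulting map kills exactly the $2$-torsion, so $\ker(\partial)=A^0_G(P^n,\M)$ and $\mathrm{Im}(\partial)\subseteq \Inv(\Brm {\rm S}_{n-2},\M)_2$.

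The main obstacle is showing that every element of $\Inv(\Brm {\rm S}_{n-2},\M)_2$ actually lifts to an invariant of $[P^n_{\rm sm}/G]$ with coefficients in $\M$, since $\M$ need not have a ring structure. Here the lifting trick of Lemma \ref{InvImage} does the work: write an arbitrary element of $\Inv(\Brm {\rm S}_{n-2},\M)_2$ as a finite sum $\sum \alpha_i\cdot \tau_i$ with $\alpha_i\in\Inv(\Brm {\rm S}_{n-2},\K_{2^r})$ (whose preimages under $\partial$ have been produced in the lemma) and $\tau_i\in \M^{\bullet}(k)_{2^r}$, then lift each $\alpha_i$ to a class $\zeta_i\in A^0_G(P^n_{\rm sm},\K_{2^r})$ and form $\zeta=\sum \zeta_i\cdot \tau_i$ using Lemma \ref{lm:multiplication}. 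By compatibility of $\partial$ with the pairing, $\partial(\zeta)=\sum \alpha_i\cdot\tau_i$, yielding surjectivity.

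Finally, to conclude that $A^0_G(P^n_{\rm sm},\M)$ decomposes as $\Inv(\Brm {\rm S}_n,\M)$ (plus a $w_2$ term when $g$ is odd), I would argue that the surjection onto $\Inv(\Brm {\rm S}_{n-2},\M)_2$ combined with the same degree-counting in the inductive step used in the lemma forces the total group to have exactly the expected generators; the injection from $\Inv(\Brm{\rm S}_n,\M)\oplus w_2\cdot\M^{\bullet}(k)_2$ then must be an isomorphism. The one subtle point to verify carefully is that the multiplication in Lemma \ref{lm:multiplication} is compatible with the equivariant localization boundary $\partial$ in the required sense, which follows from the construction of $\partial$ on representatives together with the Leibniz-type identity built into Rost's axioms for cycle modules.
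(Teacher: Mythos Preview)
Your proposal is correct and follows essentially the same approach as the paper: both run the induction on even $n$ via the localization sequence
\[
0\to A^0_G(P^n,\M)\to A^0_G(P^n_{\rm sm},\M)\xrightarrow{\partial} A^0_G(\Delta^n_1,\M)\to A^1_G(P^n,\M),
\]
identify the target inductively with (a subgroup of) $\Inv(\Brm{\rm S}_{n-2},\M)$, and prove surjectivity of $\partial$ onto the $2$-torsion by multiplying the $\K_2$-lifts $\zeta_i$ from the previous lemma against $\tau_i\in\M^\bullet(k)_2$ via Lemma~\ref{lm:multiplication}. The paper's proof is terser but records exactly this, phrasing it as ``using the structure of $A^0_G(P^n_{\rm sm},\K_\ell)$-module of $A^0_G(P^n_{\rm sm},\M)$''; your version simply spells out the mechanism and the upper bound $\mathrm{Im}(\partial)\subseteq \Inv(\Brm{\rm S}_{n-2},\M)_2$ more explicitly.
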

\begin{proof}
Consider the exact sequence
\[
A^{0}_G(P^n,\M) \hookrightarrow A^{0}_G(P^n_{\rm sm},\M) \xrightarrow{\partial} A^{0}_G(\Delta^n_{1},\M) \subset A^{0} _G(P^{n-2}_{\rm sm}\times P^1, \M)
\]
As in the lemma above, we proceed by induction on even $n$. If the result is true for $n-2$, we reduce to show that the map
\[\partial:\alpha_1\cdot \M^{\bullet}(k)_2 \oplus \ldots \oplus \alpha_{n/2}\cdot \M^{\bullet}(k)_2 \rightarrow \M^{\bullet}(k)_2 \oplus \ldots \oplus \alpha_{n/2-1}\cdot\M^{\bullet}(k)_2 \]
is surjective. This is done exactly as in the previous lemma, using the structure of $A^{0}_G(P^n_{\rm sm},\K_{\ell})$-module of $A^{0}_G(P^n_{\rm sm},\M)$.
\end{proof}

\begin{lm}
Consider the line bundle $\mathcal{L}$ associated to the $\Gm$-torsor \[\Hcal_g \rightarrow \left[ P^{2g+2}_{\rm sm}/G\right].\] The first Chern class $c_1(\mathcal{L})$ is equal to:
\begin{itemize}
\item $g\lambda_1-t$ if $g$ is even.
\item $t-2s$ if $g$ is odd.
\end{itemize}
\end{lm}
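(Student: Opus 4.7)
The plan is to identify $\mathcal{L}$ as a specific $G$-equivariant line bundle on $P^{2g+2}_{\rm sm}$ and then read off its first Chern class from the representation-theoretic data. Since the scaling $\Gm$-action on $\bA^{2g+3}$ commutes with the $G$-action, the projection
\[ [\bA^{2g+3}_{\rm sm}/G] \longrightarrow [P^{2g+2}_{\rm sm}/G] \]
is the $\Gm$-torsor in the statement, and $\mathcal{L}$ is the $G$-equivariant tautological line bundle $\mathcal{O}_{P^{2g+2}}(-1)$ equipped with the equivariant structure inherited from the $G$-action on $\bA^{2g+3}$ recorded in Theorem \ref{thm:presentation}.

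For $g$ even, so $G={\rm GL}_2$, this action is the standard representation on degree-$(2g+2)$ binary forms tensored with the character $\det^g$. Hence as ${\rm GL}_2$-equivariant line bundles, $\mathcal{L}$ is the ``standard'' tautological line bundle tensored with $\det^g$, and taking first Chern classes (using $c_1(\det)=\lambda_1$) yields $c_1(\mathcal{L})=g\lambda_1-t$.

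For $g$ odd, so $G={\rm PGL}_2\times\Gm$, the action on $\bA^{2g+3}$ is a descended ${\rm GL}_2$-action (twisted by $\det^{g+1}$ precisely so that scalars act trivially and it factors through ${\rm PGL}_2$), further multiplied by the character $\nu\mapsto\nu^{-2}$ of the $\Gm$-factor. The ${\rm PGL}_2$-contribution to $c_1(\mathcal{L})$ is the class $t$, while the weight $-2$ character of $\Gm$ contributes $-2s$ (since tensoring by a weight-$w$ character shifts $c_1$ by $ws$), giving $c_1(\mathcal{L})=t-2s$.

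The main bookkeeping task is aligning the sign conventions defining $t$, $\lambda_1$, and $s$ in the presentations of $A^{*}_G(P^n)$ with those used in computing the shift of equivariant Chern classes under a character twist; once this is fixed, both formulas follow directly from the above identification of $\mathcal{L}$ and the standard behaviour of Chern classes under tensor product with a character.
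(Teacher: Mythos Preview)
The paper does not prove this lemma at all: its proof consists solely of citations to \cite{EF}*{Lemma 3.2} for the even case and \cite{FV}*{Eq. 3.2} for the odd case. Your approach, by contrast, attempts a direct computation by identifying $\mathcal{L}$ as the tautological bundle $\mathcal{O}_{P^{2g+2}}(-1)$ with the $G$-equivariant structure inherited from the Arsie--Vistoli action, and then reading off the Chern class via the character-twist formula. This is the right idea, and in the odd case it works cleanly: there is a \emph{unique} ${\rm PGL}_2$-linearization on $\mathcal{O}(-1)$, so the class $t$ is unambiguous, and the $\Gm$-factor acts with weight $-2$, giving $t-2s$.

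The even case, however, is where your deferred ``bookkeeping task'' hides the entire content. The paper's convention is $t=c_1^{{\rm GL}_2}(\mathcal{O}_{P^n}(-1))$, but for ${\rm GL}_2$ the equivariant structure on $\mathcal{O}(-1)$ is not unique, and the paper does not say which one defines $t$. Taking your sentence literally---``standard'' tautological bundle tensored with $\det^g$, using $c_1(\det)=\lambda_1$---the naive computation gives $t+g\lambda_1$, not $g\lambda_1-t$. To land on the stated formula you need to know exactly which linearization Edidin--Fulghesu use to normalize $t$ (and possibly the sign convention relating the torsor to its line bundle), and that information lives precisely in the reference the paper cites. So your argument does not actually bypass the citation; it reproduces the easy conceptual step and defers the one step that determines the answer. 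If you want a self-contained proof, you should either fix the normalization of $t$ explicitly (e.g.\ by computing the class of $\Delta_1^n$ in your conventions and matching it to the formula $2(n-1)(t+n\lambda_1)$ stated just above the lemma) or restrict to a torus and compare weights directly.
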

\begin{proof}
The first formula is proven in \cite{EF}*{Lemma 3.2}, the second formula in \cite{FV}*{Eq. 3.2}.
\end{proof}
\begin{thm}\label{thm:CohInvHg2}
Let $\ell$ be a power of $2$. For all $g$, there is a submodule ${\rm N}^{\bullet}_g(k)$ of $\M^{\bullet}(k)_2$ such that
\begin{enumerate}
\item If $g$ is even, there is an exact sequence \[0 \rightarrow \Inv(\Brm {\rm S}_n,\M) \rightarrow \Inv(\Hcal_g,\M) \rightarrow  {\rm N}^{\bullet}_g(k) \rightarrow 0\]
such that the inverse image of a non-zero element in ${\rm N}^{\bullet}_g(k)$ has degree at least $g+2$.
\item If $g$ is odd, let \[I_{g} = \alpha_2 \cdot \M^{\bullet}(k)_2 \oplus \ldots \oplus \alpha_n \cdot \M^{\bullet}(k)_2 \subset \Inv(\Brm {\rm S}_n,\M).\] 
There is an exact sequence
\[0 \rightarrow  \M^{\bullet}(k)_2 \oplus \M^{\bullet}(k)_4\!\left[1\right] \oplus I_g \oplus  \M^{\bullet}(k)_2\!\left[2\right] \rightarrow \Inv(\Hcal_g,\M) \rightarrow {\rm N}^{\bullet}_g(k) \rightarrow 0\]
such that the inverse image of a non-zero element in ${\rm N}^{\bullet}_g(k)$ has degree at least $g+2$. The $\M^{\bullet}(k)_4\!\left[1\right]$ is equal to $\alpha'_1 \cdot \M^{\bullet}(k)_4$, where $\alpha'_1$ is a square root of $\alpha_1$.
\end{enumerate}

\end{thm}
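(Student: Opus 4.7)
We apply the localization exact sequence of Chow groups with coefficients to the zero-section embedding $Z := [P^{2g+2}_{\rm sm}/G] \hookrightarrow \mathcal{L}$, whose open complement is $\Hcal_g$. Homotopy invariance identifies $A^{*}(\mathcal{L},\M) \cong A^{*}(Z,\M)$, and under this identification the pushforward from the zero section becomes multiplication by $c_1(\mathcal{L})$. Taking codimension zero we obtain the short exact sequence
\[
0 \to \Inv(Z,\M) \to \Inv(\Hcal_g,\M) \xrightarrow{\partial} \mathrm{Ker}\!\bigl(c_1(\mathcal{L})\cdot(-):\Inv(Z,\M)\to A^1(Z,\M)\bigr) \to 0.
\]
The preceding Corollary identifies $\Inv(Z,\M)$ with $\Inv(\Brm{\rm S}_{2g+2},\M)$ for $g$ even and with $\Inv(\Brm{\rm S}_{2g+2},\M) \oplus w_2 \cdot \M^{\bullet}(k)_2$ for $g$ odd; the preceding Lemma gives $c_1(\mathcal{L}) = g\lambda_1 - t$ in the even case and $c_1(\mathcal{L}) = t - 2s$ in the odd case.

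For $g$ even, I define ${\rm N}^{\bullet}_g(k)$ to be the annihilator appearing on the right. To verify that ${\rm N}^{\bullet}_g(k) \subseteq \M^{\bullet}(k)_2$ and is concentrated in degrees $\geq g+2$, I analyze multiplication by $g\lambda_1 - t$ on each summand $\alpha_i \cdot \M^{\bullet}(k)_2$ of $\Inv(\Brm{\rm S}_{2g+2},\M)$, using the relation $[\Delta^{2g+2}_1] = 2(2g+1)(t + (2g+2)\lambda_1) = 0$ in $A^1(Z,\K_{\ell})$, the projective-bundle and Chow-ring structure of $A^{*}(Z,\H_{\ZZ/\ell\ZZ})$, and Proposition \ref{prop:chern} to transfer the computation from $\H_{\ZZ/\ell\ZZ}$-coefficients to an arbitrary $\ell$-torsion $\M$.

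For $g$ odd, the $\Gm$-factor of $G = {\rm PGL}_2 \times \Gm$ acts trivially on $P^{2g+2}$, so $A^{*}(Z,\M) = A^{*}([P^{2g+2}_{\rm sm}/{\rm PGL}_2],\M)[s]$; consequently $\beta \in A^0(Z,\M)$ lies in $\mathrm{Ker}(t - 2s)$ iff simultaneously $t\cdot\beta = 0$ and $2\beta = 0$. This splits the annihilator into a ``$t$-kernel'' part (analyzed as in the even case) and a ``$2$-torsion'' part, identifying ${\rm N}^{\bullet}_g(k)$. Crucially, since $t$ annihilates the $\alpha_1 \cdot \M^{\bullet}(k)_2$ summand (via the relation $[\Delta^{2g+2}_1] = 2(2g+1)t = 0$ on the smooth locus), this piece lies in the image of $\partial$; I lift it to $\Inv(\Hcal_g,\M)$ by producing a degree-$1$ class $\alpha_1'$ of order $4$, built from the tautological invariant of the $\Gm$-torsor $\Hcal_g \to Z$ analogously to the class $\alpha$ of Proposition \ref{prop:Amun}. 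The $\M^{\bullet}(k)_4$-multiples of $\alpha_1'$ exhaust this new degree-$1$ piece and account for the $\M^{\bullet}(k)_4[1]$ summand of statement $(2)$ in place of $\alpha_1 \cdot \M^{\bullet}(k)_2$.

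The principal obstacle is carrying out the annihilator computation on the higher-codimension summands of $\Inv(Z,\M)$ precisely enough to pin down both the $2$-torsion property and the degree bound for ${\rm N}^{\bullet}_g(k)$. In the odd case one must additionally construct the $4$-torsion lift $\alpha_1'$ and check that no further new invariants appear in degree $1$.
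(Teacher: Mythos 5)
Your skeleton is the paper's: the localization sequence for the zero section of $\mathcal{L}$ reduces everything to computing the kernel of multiplication by $c_1(\mathcal{L})$ on $A^0_G(P^{2g+2}_{\rm sm},\M)$, and your observation that the polynomial structure in $s$ splits $\ker(t-2s)$ into the two simultaneous conditions $t\beta=0$ and $2\beta=0$ is a clean way to organize the odd case. But the step you defer --- ``analyze multiplication by $c_1(\mathcal{L})$ on each summand'' --- is the entire content of the proof, and it cannot be done summand by summand. The group $A^1_G(P^{2g+2}_{\rm sm},\M)$ is not $A^0_G(P^{2g+2}_{\rm sm},\M)\otimes {\rm CH}^1_G$: a priori the classes $t\cdot\alpha_i\tau_i$ could cancel against one another or against relations coming from $i_*A^0_G(\Delta^{2g+2}_1,\M)$. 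The paper resolves this with a descending induction: pull back to the non-equivariant group, apply the composite $\Phi_n=\pi_*^{-1}\circ\partial$ landing in $A^0(P^{n-2}_{\rm sm},\M)$, use $\partial(\alpha_i)=\alpha_{i-1}+(\text{lower terms})$ to peel off the top coefficient $\tau_g$, and iterate. Without this (or an equivalent independence argument) you have not shown that the kernel meets $\alpha_1\M^{\bullet}(k)_2\oplus\cdots\oplus\alpha_g\M^{\bullet}(k)_2\oplus w_2\M^{\bullet}(k)_2$ trivially, which is exactly what the containment ${\rm N}^{\bullet}_g(k)\subseteq\M^{\bullet}(k)_2$ and the degree bound $\geq g+2$ require.

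More seriously, your claim that $t$ annihilates $\alpha_1\cdot\M^{\bullet}(k)_2$ is false, and if it were true it would change the answer (it would place a copy of $\M^{\bullet}(k)_2$ in degree $1$ of the quotient). The relation $(4g+2)t=0$ says $2t=0$ up to odd torsion; combined with $2(\alpha_1\tau_1)=0$ this does not yield $t\,\alpha_1\tau_1=0$, and the induction above shows the opposite: $c_1(\mathcal{L})(\tau_0+\alpha_1\tau_1+\cdots)=0$ forces $\tau_1=\cdots=\tau_g=\sigma=0$. The $\M^{\bullet}(k)_4[1]$ summand does not arise by lifting the $\alpha_1$-summand; it arises because the kernel of $t-2s$ on the \emph{constants} $\M^{\bullet}(k)$ is $2\M^{\bullet}(k)_4$, whose preimage under $\partial$, glued to the already-present $\alpha_1\cdot\M^{\bullet}(k)_2$, forms a module isomorphic to $\M^{\bullet}(k)_4$ in degree $1$. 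Establishing that gluing requires a class $\alpha'_1$ satisfying both $\partial\alpha'_1=2$ and $2\alpha'_1=\alpha_1$; the paper produces it from the surjection ${\rm H}^1(\Hcal_g,\mu_4)\to{\rm Pic}(\Hcal_g)_4=\ZZ/4\ZZ$, not from a tautological class of the torsor --- the torsor is nontrivial, so there is no global coordinate with which to imitate the construction in Proposition \ref{prop:Amun}, and in any case that construction would give no control over the relation $2\alpha'_1=\alpha_1$.
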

\begin{proof}
The map $\Hcal_g \rightarrow \left[ P^{2g+2}_{\rm sm}/G \right]$ is a $\Gm$-torsor. In particular, it is smooth Nisnevich, so the pullback on cohomological invariants is injective. We have to check whether there are invariants defined on $\Hcal_g$ that do not come from the base.

Let $\mathcal{L} \rightarrow  \left[ P^{2g+2}_{\rm sm}/G \right]$ be line bundle determined to the $\Gm$-torsor: its equivariant Chow groups with coefficients are isomorphic to those of $\left[ P^{2g+2}_{\rm sm}/G \right]$. The inclusion of the zero section of $\mathcal{L}$ gives us the following long exact sequence
\[0 \rightarrow A^0_G(P^{2g+2}_{\rm sm},\M) \rightarrow \Inv(\Hcal_g,\M) \xrightarrow{\partial} A^0_G(P^{2g+2}_{\rm sm},\M) \xrightarrow{c_1(\mathcal{L})}  A^1_G(P^{2g+2}_{\rm sm},\M) .  \]
Our goal is to understand the kernel of $c_1(\mathcal{L})$. We will do the computation in the odd genus case. The even case is much easier and follows from the same reasoning. 

In this case, we have $c_1(\mathcal{L})=t-2s$. The submodule of $A^1_G(P^{2g+2}_{\rm sm},\M)$ generated by $t$ and $s$ multiplied by $\M^{\bullet}(k)$ has the single relation $(4g+2)t=0$. Note that the relation implies that the submodule $t \cdot \M^{\bullet}(k)$ is of $2^r$ torsion and of $4g+2$ torsion. As $2g+1$ is odd, we get that the submodule $t \cdot \M^{\bullet}(k)$ is isomorphic to $\M^{\bullet}(k)/2\M^{\bullet}(k)$. 

On the other hand, the submodule $s \cdot \M^{\bullet}(k)$ has no additional relation, so the annihilator of $2s$ is exactly $\M^{\bullet}(k)_2$. Finally, it's easy to see that if $(t-2s)\cdot \tau =0$ for some $\tau$ in $\M^{\bullet}(k)$ then both $t\cdot \tau=0$ and $2s \cdot \tau=0$. 
This implies that any such $\tau$ must belong to both the $2$-torsion of $\M^{\bullet}(k)$ and $2\M^{\bullet}(k)$, i.e. $\tau$ is in $2\cdot \M^{\bullet}(k)_4$.

Now let us pick $\M=\K_4$. The kernel of $c_1(\mathcal{L})$, restricted to the elements of cohomological degree $0$, is generated by $2$. Given an inverse image of $2$ through $\partial$, we want to understand its relationship with the elements coming from $\left[P^{2g+2}_{\rm sm}/G \right]$. We  have ${\rm Inv}^1(\Hcal_g,K_4)={\rm H}^1(\Hcal_g,\mu_4)$. The latter has to surject onto ${\rm Pic}(\Hcal_g)_4=\ZZ/4\ZZ$. Comparing the two formulas we conclude that up to elements coming from the base field there must be an element $\alpha'_1 \in {\rm Inv}^1(\Hcal_g,\K_4)$ such that $2 \alpha'_1 = \alpha_1$ and $\partial \alpha'_1=2$.

For $\M$ a general $2^r$-torsion cycle module we can consider the submodule $\alpha'_1 \cdot \M^{\bullet}(k)_4$. It is immediate that the map $\partial$ sends it surjectively to $2 \cdot \M^{\bullet}(k)_4$ with kernel given exactly by $\alpha_1 \cdot \M^{\bullet}(k)_2$.  

Now consider an element \[\alpha=\tau_0 + \alpha_1 \tau_1 + \ldots + \alpha_{g} \tau_{g} + w_2 \sigma \in A^0_G(P^{2g+2}_{\rm sm},\M).\] We want to show that if  $c_1(\mathcal{L})\alpha = 0$ then we must have $\tau_1 = \ldots = \tau_g = \sigma = 0$. Note that the highest degree element $\alpha_{g+1}$ does not appear in the formula. Recall also that every $\tau_i$ is of $2$-torsion for $i>0$.

If $c_1(\mathcal{L})\alpha = 0 \in A^1_G(P^{n}_{\rm sm},\M)$, then the pullback of this element to the non-equivariant group $A^0(P^{n}_{\rm sm},\M)$ must be trivial as well. The pullback of $c_1(\Lcal)$ is equal to $t$, and $w_2$ goes to zero, so we get $t(\tau_0 +  \alpha_1 \tau_1 + \ldots + \alpha_{g} \tau_{g})=0$. 

For every $n\geq 4$ even, consider the morphism
\[ \Phi_n:A^0(P^n_{\rm sm},\M)\xrightarrow{\partial} A^0(\Delta_{\left[1\right],n},\M) \xrightarrow{\pi_*^{-1}} A^0(P^{n-2}_{\rm sm}\times P^1,\M)\simeq A^0(P^{n-2}_{\rm sm},\M)  \]
where $\pi:P^{n-2}_{\rm sm}\times P^1\to \Delta_{\left[1\right],n}$ maps $(f,g)$ to $fg^2$ and the last isomorphism is due to the projective bundle formula.
By construction we know that $\partial(\alpha_{i})=\alpha_{i-1}+\beta$, where $\beta$ belongs to the submodule generated by $\alpha_{i-2}, \ldots, \alpha_{1}, 1$. In particular,
\[ \Phi_{2g+2}(t\alpha)=t(\alpha' + \alpha_{g-1} \tau_g)\]
where $\alpha'$ is a combination of multiples of $1, \ldots, \alpha_{g-2}$.
After repeating this process $g-1$ times, we eventually end up with
$ t\tau_g = 0 $. As the image of $A^0(\Delta^1_{n},\M)$ in $A^1(P^n,\M)$ is generated by the pushforward of $1$, which is an even multiple of $t$, there are no additional relations in the submodule $t \M^{\bullet}(k)_2$. This implies that $\tau_g = 0$, thus $t(\tau_0 + \alpha_1 \tau_1 + \ldots + \alpha_{g-1} \tau_{g-1})=0$ in $A^1_G(P^{n}_{\rm sm},\M)$.

Applying the same argument several times, we deduce $\tau_i=0$ for $i>0$.  and $t\tau_0=0$, from which we also deduce that $\tau_0=2\tau_0'$.
In other terms, we have proved that $\alpha=\tau_0+\sigma w_2$, hence:
\[ 0=(t-2s)(\tau_0+w_2 \sigma )=(-2s+t)\tau_0+t w_2 \sigma  \]

 Note that the submodule $t (w_2 \cdot \M^{\bullet}(k)_2)$ has no additional relations and it is independent from $t \M^{\bullet}(k)_2$ and $s\M^{\bullet}(k)$ due to the projection formula and the description of the cohomological invariants of ${\rm PGL}_2$. Consequently, we need for $(t-2s)\tau_0$ and $t w_2 \sigma$ to be separately zero. We already know that the first requirement is equivalent to $\tau_0 \in 2\M^{\bullet}(k)_4$, and the second implies $\sigma=0$. 

Finally, consider a general element \[\alpha = \tau_0 + \alpha_1 \tau_1 + \ldots + \alpha_{g} \tau_{g} + w_2 \sigma + \alpha_{g+1} \tau_{g+1}.\] We may assume it is of homogeneous cohomological degree. If $c_1(\mathcal{L})\alpha=0$, either $\tau_{g+1}=0$, in which case $\alpha= \tau_0 \in 2\M^{\bullet}(k)_4$, or $\tau_{g+1} \neq 0$. Then we can consider the map $\Inv(\Hcal_g,\M) \rightarrow \M^{\bullet}(k)_2$ given by composing $\partial$ with the map sending $\alpha \in A^0_G(P^n_{\rm sm},\M)$ to $\tau_{g+1}$. The kernel of this map is exactly $\M^{\bullet}(k) \oplus \M^{\bullet}(k)_4\!\left[1\right] \oplus I_g \oplus  \M^{\bullet}(k)_2\!\left[2\right]$, and the inverse image of a nonzero element in $\M^{\bullet}(k)_2$ has degree at least $g+2$. This concludes the proof.
\end{proof}

\begin{rmk}\label{rmk:last inv}
Using the same techniques as in \cite{DilPir}*{Section 2.3}, we can conclude that when $g$ is even the module ${\rm N}^{\bullet}_g(k)$ is equal to $\M^{\bullet}(k)_2$ and explicitly describe the module structure:
\[\Inv(\Hcal_g,\M)= \alpha_1 \cdot \M^{\bullet}(k)_{4g+2} \oplus I_g \oplus \M^{\bullet}(k)_2\!\left[g+2\right] \]
where the last component is given by the element $\beta_{g+2}$ defined in \cite{DilPir}*{Section 2} and the $\Inv(\Hcal_g,\K_{\ell})$-module structure can be easily deduced from the multiplicative structure described in \cite{DilPir}*{Thm. 3.1}.

When $g$ is odd, as explained in \cite{DilPir}*{Section 4}, we can reach similar conclusions but only when $k$ is algebraically closed.
\end{rmk}

\begin{rmk}
Let $\overline{\Hcal}_g$ be the compactification of $\Hcal_g$ by means of stable hyperelliptic curves. Then, following \cite{DilPir}*{Appendix A}, we can easily conclude that the cohomological invariants $\Inv(\overline{\Hcal}_g,\M)$ are trivial whenever $\M$ is torsion.
\end{rmk}

\section{The Brauer group of $\Hcal_g$}\label{sec: Br Hg}

We are finally ready to describe the Brauer group of $\Hcal_g$. The following descriptions are immediate consequences of the description of $\Inv(\Hcal_g,\M)$ when $\M=\H_{\mu_\ell^{\vee}}$. Let $c={\rm char}(k)$ be the characteristic of the base field, and let $r_g$ be the remainder of $g$ mod $2$. Define $\ell_g=\ell_g(c)$ as the largest divisor of $2^{r_g}(4g+2)$ which is not divisible by $c$.

\begin{thm}\label{thm:BrauerHg}
We have
\[^c{\rm Br}(\Hcal_g)\simeq {^c{\rm Br}(k)} \oplus {\rm H}^1(k, \ZZ/\ell_g\ZZ)\oplus \ZZ/2\ZZ^{1+r_g} .\]
\end{thm}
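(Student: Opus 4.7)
The plan is to combine three ingredients developed earlier: the equality ${}^c{\rm Br}(\Hcal_g) = {}^c{\rm Br}'(\Hcal_g)$ provided by Theorem \ref{thm:EHKV}, the identification ${\rm Br}'(\Xcal)_\ell = {\rm Inv}^2(\Xcal, {\rm H}_{\mu_\ell^\vee})$ for quotient stacks from Proposition \ref{prop:Inv to Br}, and the computation of $\Inv(\Hcal_g, \M)$ with general torsion coefficients carried out in Section \ref{sec: Coh Inv Hg}. The hypotheses of Theorem \ref{thm:EHKV} hold for $\Hcal_g$ with $g\geq 2$: the stack is smooth, separated, generically tame ($\mathrm{char}(k)\neq 2$ and the generic stabilizer is $\ZZ/2\ZZ$), and admits a quasi-projective coarse moduli space. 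Combined with the quotient presentation in Theorem \ref{thm:presentation}, this reduces the problem to computing ${\rm Inv}^2(\Hcal_g, {\rm H}_{\mu_\ell^\vee})$ for every positive integer $\ell$ coprime to $c$ and then passing to the colimit.

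Specializing Main Theorem \ref{main:Invariants} and Remark \ref{rmk:last inv} to $\M = {\rm H}_{\mu_\ell^\vee}$ yields $\M^0(k) = {\rm H}^0(k, \mu_\ell^\vee)$, $\M^1(k) = {\rm H}^1(k, \ZZ/\ell\ZZ)$, and $\M^2(k) = {\rm H}^2(k, \mu_\ell) = {\rm Br}(k)_\ell$. Only those summands whose generator has cohomological degree at most $2$ can contribute in degree two, since $\M^j = 0$ for $j<0$: in particular $\beta_{g+2}$ (of degree $g+2 \geq 4$) is invisible, and in the odd-genus exact sequence the quotient ${\rm N}^\bullet_g(k) \subseteq \M^\bullet(k)_2$ appears with a degree shift of at least $5$, so contributes nothing in degree two. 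Writing $L := 2^{r_g}(4g+2)$, one obtains
\[
{\rm Inv}^2(\Hcal_g, {\rm H}_{\mu_\ell^\vee}) \;\simeq\; \M^2(k) \;\oplus\; \alpha_1 \cdot \M^1(k)_{L} \;\oplus\; \alpha_2 \cdot \M^0(k)_2
\]
for $g$ even, and the same expression with an additional summand $w_2 \cdot \M^0(k)_2$ for $g$ odd.

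It remains to pass to the colimit over $\ell$ coprime to $c$. The $\M^2(k) = {\rm Br}(k)_\ell$ piece assembles into ${}^c{\rm Br}(k)$. The $\alpha_1$ piece stabilizes to ${\rm H}^1_{\mathrm{Gal}}(k, \ZZ/\ell_g\ZZ)$, where $\ell_g$ denotes, as in the statement, the prime-to-$c$ part of $L$: indeed, since $\ell$ is coprime to $c$, $\gcd(L, \ell) = \gcd(\ell_g, \ell)$, which equals $\ell_g$ as soon as $\ell$ is a multiple of $\ell_g$. For the $2$-torsion pieces, $\M^0(k)_2 = {\rm H}^0(k, \mu_\ell^\vee)_2 \simeq {\rm H}^0(k, \mu_2) \simeq \ZZ/2\ZZ$ (using $\mathrm{char}(k) \neq 2$), so $\alpha_2$ contributes one $\ZZ/2\ZZ$ and, in odd genus, $w_2$ contributes a second, for a total of $\ZZ/2\ZZ^{\oplus(1+r_g)}$. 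The main delicacy lies in the odd-genus case: one must check that the non-split exact sequence of Main Theorem \ref{main:Invariants} does not hide a degree-two contribution (it does not, by the degree shift being $\geq 5$) and that the $w_2$-class is represented by a genuine Azumaya algebra rather than merely a cohomological class; the latter is guaranteed by Theorem \ref{thm:EHKV}, and a concrete representative is given by the relative Brauer--Severi scheme $\Ccal/\iota$ as discussed in the introduction. The remaining generators can be exhibited as cyclic algebras (for the $\alpha_1$ piece, via Lemma \ref{lm:cyclic}) and as the pullback of the known class on ${\rm B}{\rm S}_{2g+2}$ (for $\alpha_2$).
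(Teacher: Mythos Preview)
Your proof is correct and follows essentially the same approach as the paper's: both combine Theorem~\ref{thm:EHKV}, Proposition~\ref{prop:Inv to Br}, and the degree-two piece of the computation of $\Inv(\Hcal_g,\H_{\mu_\ell^\vee})$ from Section~\ref{sec: Coh Inv Hg}. The only difference is presentational: the paper observes that all non-constant contributions are of $\ell_g$-torsion and therefore evaluates at the single value $\ell=\ell_g$, whereas you pass to the colimit over all $\ell$ coprime to $c$; your version is in fact the more careful of the two, since the ${\rm Br}(k)_\ell$ summand is not itself of bounded torsion.
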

\begin{proof}
From the description of $\Inv(\Hcal_g,\H_{\mu_\ell^{\vee}})$ it's clear that the cohomological Brauer group of $\Hcal_g$, or its prime to ${\rm char}(k)$ part in positive characteristic, is of $\ell_g$-torsion. Then the formula for $^c{\rm Br}'$ follows immediately by taking $n = \ell_g$, and the Theorem \ref{thm:EHKV} assures us that every element comes from the Brauer group.
\end{proof}

Now we want to explicitly describe the generators of $^c{\rm Br}(\Hcal_g)$. By the description in Theorem \ref{thm:BrauerHg}, an element in $^c{\rm Br}(\Hcal_g)$ must decompose as a sum of:
\begin{itemize}
    \item Elements coming from the base field.
    \item Elements coming from the cup product of the cohomology of the base field and the degree one cohomological invariant, which by \ref{lm:H1}, \ref{lm:cyclic} are all represented by cyclic algebras.
    \item The (one or two) copies of $\ZZ/2\ZZ$ coming from ${\rm B}{\rm S}_{2g+2}$ and ${\rm B}{\rm PGL}_2$.
\end{itemize}

Differently from the case of $\Mcal_{1,1}$, there are nontrivial elements which do not come from the the cup product \[{\rm H}^1(\Hcal_g,\mu_{\ell_g}) \otimes {\rm H}^1(\Hcal_g,\ZZ/\ell_g\ZZ) \rightarrow {\rm Br}(\Hcal_g),\] namely the copies of $\ZZ/2\ZZ$. One way to see this is to note that when $k$ is algebraically closed these generators are still nonzero, but \[{\rm H}^1(\Hcal_g,\mu_{\ell_g})={\rm H}^1(\Hcal_g,\ZZ/\ell_g\ZZ)=\ZZ/\ell_g\ZZ\] and the cup product ${\rm H}^1(\Hcal_g,\mu_{\ell_g}) \cdot {\rm H}^1(\Hcal_g,\ZZ/\ell_g\ZZ)$ is zero as given a generator $\gamma$ of ${\rm Inv}^1(\Hcal_g, \ZZ/\ell_g \ZZ)$ we have $\gamma \cdot \gamma = \lbrace -1 \rbrace \gamma =0$.

We now proceed to give a more detailed depiction of the elements in $^c{\rm Br}(\Hcal_g)$ that do not come neither from the base field nor from the cup product. 

Start by considering the symmetric group ${\rm S}_{2g+2}$. From the work of Schur \cite{Sch} we know that there exists a group $\Hat{S}_{2g+2}$ which is a non-split extension of ${\rm S}_{2g+2}$ by $\mu_2$, i.e. there is a non-split exact sequence of groups
\[ 1\longrightarrow \mu_2 \longrightarrow \Hat{S}_{2g+2} \longrightarrow {\rm S}_{2g+2}\longrightarrow 1. \]
The induced morphism of classifying stacks makes $\Brm \Hat{S}_{2g+2}$ into a gerbe over $\Brm {\rm S}_{2g+2}$, banded by $\mu_2$. This gerbe cannot be a trivial $\mu_2$-gerbe, as otherwise there would exists a homomorphism ${\rm S}_{2g+2}\to \Hat{S}_{2g+2}$ splitting the short exact sequence above. In particular $\Brm \Hat{S}_{2g+2}$ is not trivial over any algebraically closed field.

The cohomology group ${\rm H}^2(\Brm {\rm S}_{2g+2},\mu_2)$ is isomorphic to the group of gerbes on $\Brm {\rm S}_{2g+2}$ banded by $\mu_2$. This, together with Theorem \ref{thm:EHKV}, lets us regard $[ \Brm \Hat{S}_{2g+2}]$ as a non-trivial element of the Brauer group of $\Brm {\rm S}_{2g+2}$. 
It easily follows from Theorem \ref{prop:Inv Sn and PGL2} that $^c{\rm Br}(\Brm {\rm S}_{2g+2})\simeq \ZZ/2\ZZ$, hence $[ \Brm \Hat{S}_{2g+2} ]$ is actually a generator of this Brauer group. Moreover $[ \Brm \Hat{S}_{2g+2} ]$, this time regarded as an Azumaya algebra, cannot be a cyclic algebra, as otherwise the gerbe $\Brm \Hat{S}_{2g+2}$ should be trivial over an algebraically closed field

Let $\Hat{\Hcal}_g$ be the $\mu_2$-gerbe on $\Hcal_g$ obtained by pulling back $\Brm {\rm S}_{2g+2}$ along the classifying morphism $\Hcal_g\to \Brm {\rm S}_{2g+2}$. Then the computation contained in Theorem \ref{thm:BrauerHg} implies that $[\Hat{\Hcal}_{g}]$ generates a copy of $\ZZ/2\ZZ$ in $^c{\rm Br}(\Hcal_g)$.

Here is an alternative description of this generator as an equivalence class of a Severi-Brauer variety: by \cite{Sch} there exists a projective representation $\PP(V)$ of ${\rm S}_{2g+2}$ which does not lift to a linear representation $V$. Then the equivalence class of the Severi-Brauer variety $[\PP(V)/{\rm S}_{2g+2}]$ over $\Brm {\rm S}_{2g+2}$ is a non-trivial element of $^c{\rm Br}(\Brm {\rm S}_{2g+2})\simeq \ZZ/2\ZZ$, hence a generator. 

Again by Theorem \ref{thm:BrauerHg}, this implies that the Severi-Brauer variety over $\Hcal_g$ defined as $\Hcal_g\times_{\Brm {\rm S}_{2g+2}} [\PP(V)/{\rm S}_{2g+2}]$ generates a copy of $\ZZ/2\ZZ$ in $^c{\rm Br}(\Hcal_g)$.

We now turn to the second copy of $\ZZ/2\ZZ$ in $^c{\rm Br}(\Hcal_g)$, which only appears when $g$ is odd. Regarding $^c{\rm Br}(\Hcal_g)$ as the group of equivalence classes of Severi-Brauer varieties, it is immediate to check that a generator for the copy of $\ZZ/2\ZZ$ is given by the universal conic $\mathcal{C}\to\Hcal_g$, which is the quotient of the universal hyperelliptic curve by the universal hyperelliptic involution.

Indeed, the universal conic induces the classifying morphism $\Hcal_g\to\Brm \PGLt$, which we use to pullback the degree $2$ cohomological invariant $w_2$ of $\Brm \PGLt$.

Observe that such a universal conic exists also when $g$ is even, but in this case it is the projectivization of a vector bundle, hence its class is zero in the Brauer group.

\begin{bibdiv}
	\begin{biblist}
	    \bib{AntMeiEll}{article}{
	        author={Antieau, B.},
	        author={Meier, L.},
	        title={The Brauer group of the moduli stack of elliptic curves},
	        journal={to appear in Algebra \& Number Theory}
	        }

		\bib{ArsVis}{article}{			
			author={Arsie, A.},
			author={Vistoli, A.},			
			title={Stacks of cyclic covers of projective spaces},			
			journal={Compos. Math.},
			volume={140},			
			date={2004},			
			number={3}	}
		\bib{BolVis}{article}{			
			author={Bolognesi, M.},
			author={Vistoli, A.},			
			title={Stacks of trigonal curves},			
			journal={Trans. Amer. Math. Soc.},
			volume={364},			
			date={2012},			
			number={7}	}	
		\bib{DJ}{article}{
		author={de Jong, A.},
		title={A result of Gabber},
		journal={Available at \\ https://www.math.columbia.edu/~dejong/papers/2-gabber.pdf},
		}
	    \bib{DilChowHyp}{article}{			
			author={Di Lorenzo, A.},			
			title={The Chow ring of the stack of hyperelliptic curves of odd genus},			
			journal={Int. Math. Res. Not. IMRN},
			date={2019},
			doi={10.1093/imrn/rnz101}
		}
		\bib{DilCohHypOdd}{article}{			
			author={Di Lorenzo, A.},
			title={Cohomological invariants of the stack of hyperelliptic curves of odd genus},	
			journal={to appear in Transformation Groups}		
		}
		\bib{DilK3}{article}{			
			author={Di Lorenzo, A.},
			title={Integral Picard group of the stack of quasi-polarized K3 surfaces of low degree},	
			journal={ar{X}iv:1910.08758 [math.AG]}		
		}
		\bib{DilPir}{article}{
		author={Di Lorenzo, A.},
		author={Pirisi, R.},
		title={A complete description of the cohomological invariants of even genus Hyperelliptic curves},
		journal={ar{X}iv:1911.04005 [math.AG]}
		}
		\bib{EF}{article}{
			author={Edidin, D.},
			author={Fulghesu, D.},
			title={The integral Chow ring of the stack of hyperelliptic curves of
				even genus},
			journal={Math. Res. Lett.},
			volume={16},
			date={2009},
			number={1},
		}
		\bib{EG}{article}{
			author={Edidin, D.},
			author={Graham, W.},
			title={Equivariant intersection theory},
			journal={Invent. Math.},
			volume={131},
			date={1998},
			number={3},
		}
		\bib{EHKV}{article}{
		author={Edidin, D.},
		author={Hasset, B.},
		author={Kresch, A.},
		author={Vistoli, A.},
		title={Brauer group and quotient stacks},
		journal={Amer. J. Math.},
		volume={123},
		date={2001},
		number={4},
		pages={ 761-777},
		}
		\bib{FriPirBrauer}{article}{
		    author={Fringuelli, R.},
			author={Pirisi, R.},
			title={The Brauer group of the universal moduli space of vector bundles over smooth curves},
			journal={Int. Math. Res. Not.},
			volume={rnz300},
			date={2019},
		}
        \bib{FV}{article}{
		    author={Fulghesu, D.},
		    author={Viviani, F.},
		    title={The Chow ring of the stacks of cyclic covers of the projective line},
		    journal={Annales de l'Institut Fourier},
		    volume={61},
		    date={2011},
		    number={6},
		}
		\bib{Gab81}{collection}{
		author={Gabber, O.},
		title={Some theorems on {A}zumaya algebras},
		series={The Brauer group, (Sem. Les plans-sur-Bex, 1980), Lecture notes in Math.},
		volume={844},
		publisher={Springer, Berlin},
		year={1981},
		}
		\bib{GilHir}{article}{
		author={Gille, S.},
		author={Hirsch, C.},
		title={On the splitting principle for cohomological invariants of reflection groups},
		journal={ar{X}iv:1908.08146 [math.AG]}
		}
	
%
		\bib{GMS}{collection}{
			author={Garibaldi, S.},
			author={Merkurjev, A.},
			author={Serre, J.-P.},
			title={Cohomological invariants in Galois cohomology},
			series={University Lecture Series},
			volume={28},
			publisher={American Mathematical Society, Providence, RI},
			date={2003},
		}
		\bib{Guil}{article}{			
			author={Guillot, P.},
			title={Geometric methods for cohomological invariants},			
			journal={Doc. Math.},
			volume={12},			
			date={2007},
		}
        \bib{KV}{article}{
        author={Kresch, A.},
        author={Vistoli, A.},
        title={On coverings of Deligne–Mumford stacks and surjectivity of the Brauer map},
        journal={Bull. Lon. Math. Soc.},
        volume={36},
        date={2004},
        number={2},
                }
		\bib{KL}{article}{
			author={Kleiman, S. L.},
			author={L\o nsted, K.},
			title={Basics on families of hyperelliptic curves},
			journal={Compositio Math.},
			volume={38},
			date={1979},
			number={1},
		}
		\bib{Lie}{article}{
		author={Lieblich, M.},
		title={Period and index in the Brauer group of an arithmetic surface},
		journal={ J. Reine Angew. Math.},
		volume={659},
		date={2011},
		note={With an appendix by Daniel Krashen},
		}
		\bib{Mei}{article}{
		author={Meier,L.},
		title={Computing Brauer groups via coarse moduli},
		journal={available at \\ http://www.staff.science.uu.nl/~meier007/CoarseBrauer.pdf}
		}
		\bib{PirCohHypEven}{article}{
			author={Pirisi, R.},
			title={Cohomological invariants of hyperelliptic curves of even genus},
			journal={Algebr. Geom.},
			volume={4},
			date={2017},
			number={4},}
		\bib{PirAlgStack}{article}{			
			author={Pirisi, R.},
			title={Cohomological invariants of algebraic stacks},			
			journal={Trans. Amer. Math. Soc.},
			volume={370},			
			date={2018},			
			number={3}	}
		\bib{PirCohHypThree}{article}{
			author={Pirisi, R.},
			title={Cohomological invariants of hyperelliptic curves of genus 3},			
			journal={Doc. Math.},
			volume={23},
			date={2018}}
		\bib{Rost}{article}{			
			author={Rost, M.},
			title={Chow groups with coefficients},			
			journal={Doc. Math.},
			volume={1},			
			date={1996},			
			number={16}	}
		\bib{Sch}{article}{
               author={Schur, J.},
               title={\"{U}ber die Darstellung der symmetrischen und der alternierenden
               Gruppe durch gebrochene lineare Substitutionen},
               language={German},
               journal={J. Reine Angew. Math.},
               volume={139},
               date={1911},
            }
	    \bib{Shi}{article}{			
			author={Shin, M.},
			title={The Brauer group of the moduli stack of elliptic curves over algebraically closed fields of characteristic 2},			
			journal={J. Pure Appl. Algebra},
			volume={223},
			date={2019},			
			number={5}	}
        \bib{StPr}{misc}{
		    label={Stacks},
		    title={{S}tacks-{P}roject},
            author={The {Stacks Project Authors}},
            publisher={https://stacks.math.columbia.edu},
            date = {2018},
        }	
        \bib{Tot}{article}{
        author = {Totaro, B.},
        title = {The Chow Ring of a Classifying Space},
        journal={Algebraic K-Theory},
        booktitle = {Proc. Symposia in Pure Math. 67},
        year = {1999},
        pages = {249--281},
        }
		\bib{Voe}{article}{
		    author={Voevodsky, V.},
		    title={On motivic cohomology with $Z/l$-coefficients},
		    journal={Annals of Mathematics},
		    volume={174},
		    pages={401-438},
		    date={2011},
		}	
		\bib{Wit}{article}{
			author={Witt, E.},
			title={Theorie der quadratischen Formen in beliebigen K\"orpern},
			language={German},
			journal={J. Reine Angew. Math.},
			volume={176},
			date={1937},
		
		}

	\end{biblist}
\end{bibdiv}
\end{document}